\numberwithin{equation}{section}
\pgfplotsset{compat=1.17}
\tikzset{> = {Stealth[inset=0pt]}}
\newcommand{\N}{\mathbb{N}}
\newcommand{\R}{\mathbb{R}}
\newcommand{\sphere}{\mathbb{S}}
\newcommand{\from}{\colon}
\newcommand{\eps}{\varepsilon}
\newcommand{\loc}{\text{loc}}
\newcommand{\vfi}{\varphi}
\newcommand{\Id}{\mathrm{Id}}
\newcommand{\Om}{\Omega}
\DeclareMathOperator{\bari}{\rm bar}
\newcommand{\Hcal}{{\mathcal{H}}}
\def\XXint#1#2#3{{\setbox0=\hbox{$#1{#2#3}{\int}$ }
\vcenter{\hbox{$#2#3$ }}\kern-.6\wd0}}
\newcommand{\ind}[1]{\mathds{1}_{#1}}
\DeclareMathOperator{\dist}{dist}
\DeclareMathOperator{\diverg}{div}
\DeclareMathOperator{\diam}{diam}
\newtheorem{proposition}{Proposition}[section]
\newtheorem{theorem}[proposition]{Theorem}
\newtheorem{corollary}[proposition]{Corollary}
\newtheorem{lemma}[proposition]{Lemma}
\theoremstyle{definition}
\newtheorem{definition}[proposition]{Definition}
\newtheorem{remark}[proposition]{Remark}
\newcommand{\beq}{\begin{equation}}
\newcommand{\eeq}{\end{equation}}
\newcommand{\ben}{\begin{enumerate}}
\newcommand{\een}{\end{enumerate}}
\newcommand{\bit}{\begin{itemize}}
\newcommand{\eit}{\end{itemize}}
\newcommand{\dys}{\displaystyle}
\newcommand{\NTr}{\ell}
\DeclareMathOperator{\od}{\Lambda}
\newcommand{\IM}{I}
\newcounter{fakecounter}
\title{Asymptotic  location and shape of the optimal favorable region  in a  Neumann spectral problem}
\author{Lorenzo Ferreri, Dario Mazzoleni, Benedetta Pellacci and Gianmaria Verzini}
\begin{document}
\maketitle

\begin{abstract}
We complete the study  concerning the minimization 
of the positive principal eigenvalue associated with a weighted Neumann problem settled in a bounded regular domain $\Omega\subset \R^{N}$, $N\ge2$, for the weight varying in a suitable class of sign-changing bounded functions. 
Denoting with $u$ the optimal eigenfunction and with 
$D$ its super-level set, corresponding to the positivity set of the optimal weight, we prove that, as the measure of $D$ tends to zero, the unique maximum point of $u$, 
$P\in \partial \Omega$,  tends to a point of maximal mean curvature of $\partial \Omega$. 
Furthermore, we show that $D$ is the intersection with $\Omega$ of a $C^{1,1}$ nearly 
spherical set, and we provide a quantitative estimate of the spherical asymmetry, which decays like a power of the measure of $D$.

These results provide, in the small volume regime, a fully detailed answer to some long-standing 
questions in this framework.
\end{abstract}
\noindent
{\footnotesize \textbf{AMS-Subject Classification}}. 
{\footnotesize 49R05, 49Q10; 92D25, 35P15, 47A75.}\\
{\footnotesize \textbf{Keywords}}. 
{\footnotesize Singular limits, survival threshold, concentration phenomena, nearly spherical sets.}

\section{Introduction}

Let $\Omega\subset\R^N$ be a (open and connected) domain, $D\subset\Omega$ be a measurable 
subset of positive measure, $\beta>0$ be a given constant, and let us consider the principal eigenvalue  
\begin{equation}\label{eq:def_lambda_beta_D}
\lambda(D)=\lambda(D,\Omega):= \inf \left\{
\int_\Omega |\nabla u|^2\,dx :  
u\in H^1(\Omega),\ 
\int_D u^2 - \beta \int_{\Omega\setminus D} u^2\,dx = 1\right\}, 
\end{equation}
associated to the indefinite weighted Neumann eigenvalue problem
\begin{equation}\label{eq:Cg}
\begin{cases}
-\Delta u=\lambda m_D u&\text{in }\Omega,\\
\partial_\nu u=0 &\text{on }\partial \Omega,
\end{cases}
\qquad
\text{where }m_D:=\ind{D} - \beta\ind{\Omega\setminus D}.
\end{equation}
Throughout this paper we mainly deal with a fixed domain $\Omega$, bounded and with regular boundary 
(at least $C^{3,\theta}$, for some $\theta>0$, although for a relevant part of our results $C^{2,1}$ is enough), and we 
omit the dependence of $\lambda$ on $\Omega$ in case no confusion arises. For such an $\Omega$, 
$\lambda(D,\Omega)$ is strictly positive, and achieved by a strictly positive non-constant 
eigenfunction, if and only if $m_D$ has negative average (thus avoiding $\lambda(D) = 0$, with constant eigenfunction). This condition translates in terms of the Lebesgue measure of $D$ as \begin{equation}\label{eq:cond_on_delta}
0< |D|=:\delta < \dfrac{\beta|\Omega|}{\beta+1}.
\end{equation}
For any such $\delta$, let us consider the shape optimization problem
\begin{equation}\label{eq:def_od}
\od(\delta)=\min\Big\{\lambda(D):D\subset \Om,\mbox{ measurable, }|D|=\delta\Big\}.
\end{equation}
It has been proved in \cite{ly} that $\od(\delta)$ is achieved for every $\delta$ enjoying 
\eqref{eq:cond_on_delta} by an optimal shape $D_\delta$, with associated positive eigenfunction 
$u_\delta\in C^{1,\alpha}(\overline{\Omega})$, normalized in $L^2(\Omega)$. In particular, $u_\delta$ satisfies \eqref{eq:Cg} with 
$\lambda = \od(\delta)$ and $D=D_\delta$, and moreover $D_\delta$ is a superlevel set of $u_\delta$, for a suitable choice of the level.

In this paper we pursue the analysis started in \cite{MPV3}, concerning the asymptotic location and shape 
of the optimal set $D_\delta$, in the small volume regime $\delta\to0$ 
(in which \eqref{eq:cond_on_delta} is always satisfied). In particular, we provide a 
complete answer to some questions which were left open in \cite{MPV3}.

Before recalling the results contained in \cite{MPV3} and outlining our contributions here, let us 
briefly describe one main motivation to investigate \eqref{eq:def_od}, which comes from population 
dynamics and is related to the optimal design of an habitat to enhance the chances of persistence of a 
species. 
Consider a population of density $u=u(x,t)$ which disperses in an insulated region $\Omega$ according to a Cauchy-Neumann reaction-diffusion model of logistic type:
\[
\begin{cases}
u_t - d\Delta u = m(x) u - u^2& x\in\Omega,\ t>0,\\
u = u_0 \ge 0 & x\in\overline{\Omega},\ t=0\\
 \partial_\nu u = 0 & x\in\partial\Omega,\ t>0
\end{cases}
\]
(see \cite{MR1014659,MR2214420}). In such a model, favorable and hostile zones of the 
heterogeneous habitat 
respectively correspond to positivity and negativity sets of the sign-changing weight $m\in L^\infty(\Omega)$, while 
the constant motility rate $d>0$ encodes the intensity of diffusion. It is well known (see e.g.\ 
\cite{MR2214420}) that persistence of the population, for every nontrivial initial datum $u_0$, 
is equivalent to the existence of a positive steady state, which in turn is equivalent to the 
strict negativity of the (non-weighted) principal eigenvalue $\tilde\lambda_1=\tilde\lambda_1(m,d)$ of the 
eigenvalue problem
\begin{equation}\label{eq:Bg}
\begin{cases}
-d\Delta u-m(x)u=\tilde\lambda u&\text{in }\Omega,\\
\partial_\nu u=0 &\text{on }\partial \Omega.
\end{cases}
\end{equation}
On the other hand, using the monotonicity of $\tilde\lambda_1(m,d)$ with respect to $d$, it is not 
difficult to see that, for every fixed habitat $m\in L^\infty(\Omega)$, there exists a threshold 
motility $d^*=d^*(m)\in[0,+\infty]$ such that
\[
\tilde \lambda_1(m,d) < 0 \qquad\iff\qquad d < d^*(m).
\]
In particular, since $d^*$ is defined by $\tilde\lambda_1(m,d^*(m))=0$, we obtain 
that the corresponding positive eigenfunction $\varphi^*$ satisfies
\[
\begin{cases}
-\Delta \varphi^* = \dfrac{1}{d^*(m)} m \varphi^* &\text{in }\Omega,\\
\partial_\nu \varphi^*=0 &\text{on }\partial \Omega,
\end{cases}
\]
so that $\lambda_1(m) := \dfrac{1}{d^*(m)}$ is the positive principal eigenvalue of \eqref{eq:Cg} 
(with $m$ instead of $m_D$).

Since the population persists if and only if $d<d^*(m)$, it is natural to be interested in 
maximizing such threshold, i.e. minimizing $\lambda_1(m)$, with respect to $m$ or other relevant 
parameters of the problem. In this direction, there is a large variety of contributions in the 
literature (see e.g. \cite{haro,beroro,MR3498523,mapeve,dipierro2021nonlocal, MR3771424, PePiSc} and references therein), for instance including other possible types of diffusion or addressing different but related 
optimization problems, also appearing in the context of composite membranes  (see \cite{MR1796024, 
MR2421158} and the review \cite{MAZARI2022401}). On the other hand, one of the most considered 
problem is the minimization of $\lambda_1(m)$ for $m$ in a class of 
weights $\mathcal M$ which fixes the total resources $\int_\Omega m$, as well as pointwise lower 
and upper bounds $-\beta \le m \le 1$, see e.g. \cite{MR1014659,ly,llnp}. 
In such a case, it has been shown in \cite{ly} that the infimum of $\lambda_1(m)$ is achieved by a bang-bang (piecewise constant) optimal weight $m = m_D$, as in \eqref{eq:Cg}, 
for some measurable set $D\subset\Omega$, which represents the favorable zone of the optimal habitat.  
Therefore, the optimal design problem for the survival threshold of the population reduces to 
\eqref{eq:def_od}, where $\delta$ is prescribed by the fixed average of $m$. Moreover, any 
optimal set $D_\delta$ is a superlevel set of the associated positive eigenfunction $u_\delta$, so 
that the location of $D_\delta$ is somewhat related to that of maximum points of $u_\delta$.

While the picture is nowadays completely clear in dimension $N=1$, not very much is known about 
the properties of $D_\delta$, $u_\delta$, in dimension $N\ge2$; several conjectures and open 
questions have been formulated in the literature \cite{MR1105497,MR2214420,ly,mapeve,MAZARI2022401} about the shape and location of $D_{\delta}$, also in the case of other boundary conditions. Up to our 
knowledge, the only results for general domains $\Omega$ are contained in \cite{MPV3}, in 
the small volume regime, inspired by techniques in the framework of concentration results for 
semilinear problems \cite{nitakagi_cpam}. More precisely, the results in \cite{MPV3} are obtained through a blow-up procedure, in connection with the limit problem
\begin{equation}\label{eq:defIM}
\begin{split}
\IM&:=\min\Big\{\lambda(A,\R^N_+):A\subset \R^N_+,\text{ measurable, }|A|=1\Big\}\\
&=\min\Big\{\lambda(A,\R^N):A\subset \R^N,\text{ measurable, }|A|=2\Big\},
\end{split}
\end{equation}
where $\R^N_+:=\R^N\cap\{x:x_N>0\}$ denotes the $N$-dimensional upper half-space. It has been shown in \cite[Sec. 2]{MPV3}, by 
reflection and symmetrization, that $\IM$ is achieved by a half ball, centered at the boundary of $\R^N_+$, with an associated 
radial and radially decreasing eigenfunction. Precisely, we have that
\begin{equation}\label{eq:lambdazero}
\IM = \lambda(B_{r_2}^+, \R^N_+) = \lambda(B_{r_2}, \R^N),
\end{equation}
where $B_{r_2}\subset\R^N$ denotes the ball of measure $2$, centered at the origin, 
and $r_2$ denotes its radius. Moreover such minimizer is unique up to translations along $\partial \R^N_+$, and in turn 
$\lambda(B_{r_2}^+, \R^N_+)$ is achieved by (the restriction to $\R^N_+$ of) 
$w\in H^1_{\text{rad}}(\R^N)\cap C^{1,1}(\R^N)$, solution of 
\begin{equation}\label{eq:lim_prob}
-\Delta w = \IM m w \quad \text{in } \R^N ,
\qquad\text{where }m:=\ind{B_{r_2}} - \beta 
\ind{\R^N\setminus B_{r_2}}.
\end{equation}
Here $w>0$ is radially symmetric and radially decreasing; 
as a matter of fact, $w$ is explicit in terms of Bessel functions, and it decays 
exponentially at infinity:
\begin{equation}\label{eq:decay_w}
|w(x)|+|\nabla w(x)| \sim C |x|^{-(N-1)/2} e^{-\sqrt{\IM\beta}|x|}\qquad
\text{as }|x|\to+\infty.
\end{equation}
In the next statement we collect the part of the results obtained in \cite{MPV3} which is relevant 
for the present discussion. To this aim, for any $P\in\partial\Omega$ we denote with $H_P$ the mean 
curvature of $\partial \Omega$ at $P$. 
\begin{theorem}[{\cite[Thms. 1.2, 1.6]{MPV3}}]\label{th:intro_D}
Let $\partial\Omega$ be of class $C^{2,1}$.
There exists $\delta_{0}>0$ such that, for every $\delta\in (0,\delta_{0})$:
\begin{enumerate}
\item $u_\delta$ has a unique local maximum point $P_\delta\in \partial \Omega$; 
\item $D_\delta$ is connected.
\setcounter{fakecounter}{\theenumi}
\end{enumerate}
Moreover, as $\delta \to0$,
\begin{enumerate}
\addtocounter{enumi}{\value{fakecounter}}
\item for any choice of $r_\pm(\delta)$ such  that $|B_{r_\pm(\delta)}|=2\delta(1\pm o_\delta(1))$,
\[
B_{r_-(\delta)}(P_\delta)\cap\Omega \; \subset \; D_\delta \;\subset\; B_{r_+(\delta)}(P_\delta)
\cap\Omega;
\]
\item\label{punto4} for a universal constant $\Gamma>0$, explicit in terms of $w$ (see equation \eqref{eq:Gamma}),  
\[
\IM \delta^{-2/N}\left(1+o_\delta(1)\right)\le \od(\delta)\leq \IM \delta^{-2/N}\left(
1-\Gamma  H_{P}\delta^{1/N}+o(\delta^{1/N})\right),
\] 
for any $P\in \partial \Omega$.
\end{enumerate}
\end{theorem}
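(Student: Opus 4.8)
The plan is to run a blow-up analysis at the natural length scale $\delta^{1/N}$, coupled with a second-order expansion of the Rayleigh quotient near $\partial\Omega$ that exposes the mean curvature. Set $\Omega_\delta:=\delta^{-1/N}\Omega$ and $\tilde D_\delta:=\delta^{-1/N}D_\delta$, so that $|\tilde D_\delta|=1$, and let $\tilde u_\delta$ be the suitably renormalized rescaling of $u_\delta(\delta^{1/N}\,\cdot\,)$, which solves the rescaled eigenvalue problem on $\Omega_\delta$ with eigenvalue $\mu_\delta:=\od(\delta)\,\delta^{2/N}$ and is bounded in $H^1$. By the scaling identity $\lambda(tA,tG)=t^{-2}\lambda(A,G)$, the half-space problem \eqref{eq:defIM} is the only competitive limit: concentration at a point staying at distance $\gg\delta^{1/N}$ from $\partial\Omega$ produces in the limit the whole-space problem with a set of measure $1$, whose value $2^{2/N}\IM$ exceeds $\IM$, and splitting the mass into several pieces of measure $s_i<1$ is worse still, since $s\mapsto\min_{|A|=s}\lambda(A,\R^N_+)=s^{-2/N}\IM$ is strictly decreasing. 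Testing \eqref{eq:def_od} with (the $\Omega$-part of) a small half-ball glued to $\partial\Omega$ gives $\mu_\delta\le\IM+o_\delta(1)$, so $\{\mu_\delta\}$ is bounded; a concentration-compactness argument adapted to this indefinite problem — the sign-changing structure of $m_{D_\delta}$ precluding vanishing — then shows that, after recentering at a maximum point $P_\delta$ of $u_\delta$ and passing to a subsequence, $\Omega_\delta$ converges to $\R^N$ or $\R^N_+$, $\tilde D_\delta$ converges weakly-$*$ to some $A_\infty$ with $|A_\infty|\le1$, and $\tilde u_\delta\to w_\infty\ge0$ in $H^1_\loc$, a ground state of the limit problem. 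Lower semicontinuity of the Rayleigh quotient gives $\liminf_\delta\mu_\delta\ge\IM$, which is the lower bound in point (4), and forces $P_\delta$ to stay within distance $O(\delta^{1/N})$ of $\partial\Omega$, so the limit is genuinely the half-space one.

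For the matching upper bound, fix $P\in\partial\Omega$ and pass to boundary-normal coordinates at $P$, in which the volume element of $\Omega$ is $\bigl(1-(N-1)H_P\,t+O(t^2)\bigr)\,\de t\,\de\sigma$, $t$ denoting the distance to $\partial\Omega$. Use the competitor $D=B_\rho(Q)\cap\Omega$, with $|B_\rho|=2\delta(1+o_\delta(1))$ and $Q$ near $P$, and the test function $u(x)=w\bigl(\delta^{-1/N}(x-Q)\bigr)$ modified near $\partial\Omega$ to be admissible, $w$ being the radial profile of \eqref{eq:lim_prob}. The Rayleigh quotient equals $\IM\delta^{-2/N}$ to leading order; expanding numerator and denominator against the metric factor and using the radial symmetry of $w$ isolates exactly one correction of order $\delta^{1/N}$, proportional to $-H_P$, whose coefficient is the explicit weighted moment of $w$ collected in \eqref{eq:Gamma}. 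The exponential decay \eqref{eq:decay_w} makes the remaining errors — the tails outside the coordinate chart, the replacement of $\Omega$ by its tangent half-space, the exact enforcement of $|D|=\delta$ — of order $o(\delta^{1/N})$; optimizing over $Q$ and over the measure of $B_\rho$ delivers the upper bound in point (4), for every $P$, and in particular for a point of maximal mean curvature.

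Matching the two bounds forces $w_\infty$ to realize the half-space minimum, so by the uniqueness following \eqref{eq:lambdazero} (up to translations along $\partial\R^N_+$) one has $w_\infty=w$, the radial, radially decreasing profile. Rescaling the uniform $C^{1,\alpha}(\overline\Omega)$ bounds on $u_\delta$ and using elliptic regularity upgrades the convergence to $C^{1,\alpha}_\loc$, and \eqref{eq:decay_w} makes it uniform on $\Omega_\delta$. Since $w$ has a unique, non-degenerate maximum at the origin (Hessian $\approx-c\,\Id$), $C^1$-closeness endows $\tilde u_\delta$, hence $u_\delta$, with a unique critical point near $P_\delta$; moreover $u_\delta$ is strictly subharmonic on $\Omega\setminus\overline{D_\delta}$ (there $-\Delta u_\delta=-\od(\delta)\beta u_\delta<0$), so the strong maximum principle and the Hopf lemma confine all its local maxima to $\overline{D_\delta}$, and the unique one sits on $\partial\Omega$, exactly as in the half-space model — this is point (1). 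For point (3), $D_\delta=\{u_\delta>\ell_\delta\}$ rescales to $\{\tilde u_\delta>\ell_\delta\}$, which by uniform closeness to $w$ is trapped between the balls $\{w>\ell_\delta\mp o_\delta(1)\}$; de-scaling and intersecting with $\Omega$ yields $B_{r_-(\delta)}(P_\delta)\cap\Omega\subset D_\delta\subset B_{r_+(\delta)}(P_\delta)\cap\Omega$. Point (2) then follows, since at scale $\delta^{1/N}$ the set $B_{r_-(\delta)}(P_\delta)\cap\Omega$ is a small perturbation of a half-ball, hence connected, and $D_\delta$ is squeezed between it and the slightly larger connected set $B_{r_+(\delta)}(P_\delta)\cap\Omega$ while being itself, up to null sets, a near-ball.

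The delicate point is the second-order expansion of the upper bound: the curvature term has to be extracted at order $\delta^{1/N}$ while three a priori comparable errors — the deviation of $\partial\Omega$ from its tangent plane, the mismatch between the finite competitor and the noncompactly supported profile $w$, and the renormalization enforcing $|D|=\delta$ — are simultaneously pushed to $o(\delta^{1/N})$, so that the constant $\Gamma$ in \eqref{eq:Gamma} comes out clean; the exponential decay \eqref{eq:decay_w} is exactly what makes this work. The concentration step — excluding vanishing and splitting of the rescaled eigenfunctions, thereby pinning a single-bubble limit — is the other substantial ingredient, although it follows a by-now standard scheme for singularly perturbed eigenvalue problems.
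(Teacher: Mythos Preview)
This theorem is not proved in the present paper: it is quoted from the earlier work \cite{MPV3}, and here the authors only recall the blow-up machinery developed there (the boundary-straightening diffeomorphism $\Phi_\delta$, the even reflection across $\{y_N=0\}$, and the rescaled eigenfunctions $w_\delta$; see the beginning of Section~\ref{sec:boundbelow} and Proposition~\ref{prop:vecchiolavoro}). Your outline follows precisely that strategy, which is the Ni--Takagi scheme of \cite{nitakagi_cpam} adapted to the indefinite-weight eigenvalue problem: blow-up at scale $\delta^{1/N}$; identification of the half-space problem \eqref{eq:defIM} as the only competitive limit via concentration-compactness and the comparison $2^{2/N}\IM>\IM$ that rules out interior concentration; a competitor built from the radial profile $w$ composed with $\Phi_\delta$ for the upper bound, with the curvature correction read off the Jacobian expansion \eqref{eq:lemmaA1}; and finally $C^{1,\alpha}$ convergence plus non-degeneracy of the maximum of $w$ to locate $P_\delta$ and sandwich the superlevel set $D_\delta$.

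One step you pass over too quickly: to conclude that $P_\delta$ lies \emph{exactly} on $\partial\Omega$, and not merely at distance $o(\delta^{1/N})$, the argument in \cite{MPV3} uses the even reflection across the straightened boundary. If $P_\delta$ were interior, the reflected function $\widetilde v_\delta$ of \eqref{eq:vtildek} would acquire two nearby local maxima (the image of $P_\delta$ and its mirror), which is incompatible with $C^1$-closeness to $w$, whose unique maximum at the origin is non-degenerate. Your phrase ``sits on $\partial\Omega$, exactly as in the half-space model'' gestures at this but does not supply it; the subharmonicity of $u_\delta$ outside $D_\delta$ only confines maxima to $\overline{D_\delta}$, not to $\partial\Omega$. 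Apart from this, the sketch is correct and matches the approach of \cite{MPV3} as summarized in this paper.
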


In particular, for $\delta$ small, the optimal set $D_\delta$ is connected, it intersects 
$\partial\Omega$, and it roughly looks as the intersection of $\Omega$ with a shrinking ball centered at 
moving points $P_\delta\in\partial\Omega$. In this respect, the main questions left open in 
\cite{MPV3} concern the asymptotic location of $P_\delta$ as $\delta\to0$, as well as more quantitative 
information about the asymptotic shape of $D_\delta$. 

Our aim here is twofold. First of all, we are now able to obtain an exact expansion of the optimal 
eigenvalue with respect to $\delta$, which allows us to detect the location of $P_{\delta}$ at the points of highest mean curvature of $\partial \Omega$, as stated in our first main result.
\begin{theorem}\label{thm:sviluppoesatto}
With the same assumptions and notations of Theorem \ref{th:intro_D}, we have that, as $\delta\to0$, 
\begin{enumerate}
\item
$\dys H_{P_\delta}\rightarrow \max\{H_P : P\in \partial\Omega\}=:\widehat H;$
\item
$\dys \od(\delta)= \IM \delta^{-2/N}\left(1-\Gamma \widehat H \delta^{1/N}+o(\delta^{1/N})\right).$
%\end{equation}
\end{enumerate}
\end{theorem}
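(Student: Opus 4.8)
The plan is to bootstrap from the quantitative bounds in Theorem \ref{th:intro_D}\eqref{punto4} by establishing a \emph{matching lower bound} for $\od(\delta)$ that upgrades the error term $o_\delta(1)$ appearing in the left inequality to one of order $O(\delta^{1/N})$ with the correct geometric coefficient. Concretely, I would work with the optimal eigenfunction $u_\delta$ and its maximum point $P_\delta\in\partial\Omega$, and perform a careful blow-up of $u_\delta$ around $P_\delta$ at scale $\delta^{1/N}$. The blow-up sequence converges (in a suitable sense, using the a~priori $C^{1,\alpha}$ bounds, the exponential decay \eqref{eq:decay_w}, and the near-ball inclusions in point (3) of Theorem \ref{th:intro_D}) to the radial limit profile $w$ solving \eqref{eq:lim_prob}; since the rescaled domains $\frac{1}{\delta^{1/N}}(\Omega - P_\delta)$ flatten to the half-space $\R^N_+$, with the curvature of $\partial\Omega$ at $P_\delta$ entering the second-order expansion of the boundary. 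The key point is to retain this curvature term: writing $\partial\Omega$ locally as a graph $x_N = \tfrac12 \langle \mathrm{II}_{P_\delta} x', x'\rangle + o(|x'|^2)$, the Rayleigh quotient for $u_\delta$ acquires, after rescaling, a correction proportional to $H_{P_\delta}\,\delta^{1/N}$ times a universal integral of $w$ — this is exactly the constant $\Gamma$ from \eqref{eq:Gamma}. This yields
\[
\od(\delta) \ge \IM \delta^{-2/N}\bigl(1 - \Gamma H_{P_\delta}\delta^{1/N} + o(\delta^{1/N})\bigr).
\]

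Once both inequalities are in hand, the proof of (1) is a short comparison argument. Combining the lower bound just obtained with the upper bound of Theorem \ref{th:intro_D}\eqref{punto4} applied at a point $P^\star$ realizing $\widehat H = \max_{\partial\Omega} H_P$, we get
\[
1 - \Gamma H_{P_\delta}\delta^{1/N} + o(\delta^{1/N}) \le \frac{\od(\delta)}{\IM\delta^{-2/N}} \le 1 - \Gamma \widehat H \delta^{1/N} + o(\delta^{1/N}),
\]
and dividing the resulting inequality $-\Gamma H_{P_\delta} + o(1) \le -\Gamma \widehat H + o(1)$ by $-\Gamma < 0$ forces $\limsup_{\delta\to0} H_{P_\delta} \le \widehat H$; since trivially $H_{P_\delta} \le \widehat H$ for all $\delta$, and the two chained inequalities also give $H_{P_\delta} \ge \widehat H + o(1)$, we conclude $H_{P_\delta} \to \widehat H$, which is (1). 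Then (2) follows immediately: plugging $H_{P_\delta} = \widehat H + o_\delta(1)$ back into the lower bound gives $\od(\delta) \ge \IM\delta^{-2/N}(1 - \Gamma\widehat H\delta^{1/N} + o(\delta^{1/N}))$, matching the upper bound of Theorem \ref{th:intro_D}\eqref{punto4} at $P^\star$, so both are equalities up to $o(\delta^{1/N})$.

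The main obstacle is the sharp lower bound with the \emph{right} curvature coefficient. The $o_\delta(1)$ in Theorem \ref{th:intro_D}\eqref{punto4} is too crude, so the blow-up analysis must be pushed to second order: one needs uniform (in $\delta$) control of the convergence $u_\delta(P_\delta + \delta^{1/N}\,\cdot\,) \to w$ — not merely weak $H^1_{\mathrm{loc}}$ convergence but enough decay-uniformity to integrate the curvature correction over all of $\R^N_+$ against the exponentially decaying tail of $w$ in \eqref{eq:decay_w}. A clean way to organize this is to use $D_\delta$ (which by point (3) of Theorem \ref{th:intro_D} is squeezed between two balls of volume $2\delta(1+o(1))$ centered at $P_\delta$) as a competitor/test configuration in both directions, and to change variables via a diffeomorphism straightening $\partial\Omega$ near $P_\delta$, tracking the Jacobian expansion $1 + (N-1)H_{P_\delta}\,t + \dots$ in normal coordinates $(x', t)$. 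The appearance of the factor $(N-1)$ and the precise normalization of $H_P$ as the (arithmetic) mean curvature must be reconciled with the definition of $\Gamma$ in \eqref{eq:Gamma}; checking this bookkeeping, together with proving that the cross terms and higher-order boundary corrections are genuinely $o(\delta^{1/N})$ uniformly, is where the real work lies. I expect the remaining estimates — the $C^{1,\alpha}$ compactness, the identification of the blow-up limit, and the exponential-decay control on the tails — to be available from or closely parallel to the arguments already developed in \cite{MPV3}.
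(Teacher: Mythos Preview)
Your plan matches the paper's approach exactly: the proof of Theorem \ref{thm:sviluppoesatto} is reduced to the lower bound of Theorem \ref{thm:boundfrombelow}, obtained by blowing up $u_\delta$ around $P_\delta$, straightening $\partial\Omega$ via a diffeomorphism, and expanding the Rayleigh quotient so that the Jacobian correction produces the curvature term $\alpha_\delta=(N-1)H_{P_\delta}$; the short comparison with the upper bound of Theorem \ref{th:intro_D}\eqref{punto4} you describe is then verbatim the paper's proof. The one point you underestimate is the uniform-in-$\delta$ exponential decay: it is \emph{not} already in \cite{MPV3} (which only gives the qualitative version recalled in Theorem \ref{thm:2.3}) but is the new content of Section \ref{sec:expdecay} (Theorem \ref{thm:expdecayudelta} and Lemma \ref{lem:expdecayw}), proved via a del Pino--Flores type doubling iteration, and this is precisely what lets the tail errors be shown to be $o(\delta^{1/N})$ rather than merely $o_\delta(1)$.
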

%%%%
%
The proof of Theorem \ref{thm:sviluppoesatto} is based on the 
asymptotic expansion of a suitable Rayleigh quotient related to $\od(\delta)$. In performing this expansion one cannot take advantage of any kind of linearization argument, due to the presence of the discontinuous weight. However,
exploiting the analogies with concentration results for singularly perturbed semilinear elliptic equations, obtained in \cite{MR1736974, delpino_flores}, we  can show a refined 
exponential decay of $u_{\delta}$. This allows us to obtain the desired second order expansion of $\od(\delta)$ yielding the information in Theorem  \ref{thm:sviluppoesatto}.

Next, we focus on the asymptotic shape of the optimal set
$D_{\delta}$ and we show that it is nearly spherical in a quantitative way, as in the following  result.
\begin{theorem}\label{thm:nearlyspher1}
Let $\partial\Omega$ be of class $C^{3,\theta}$, for some $\theta>0$.
There exists $\overline{\delta}>0$ such that, for every $\delta \in (0,\overline{\delta})$, there exists 
$Q_\delta\in\partial\Omega$, $\rho_{\delta}\in C^{1,1}({\mathbb S}^{N-1})$, such that
\begin{equation}\label{eq:defrho}
D_{\delta}=\left\{x\in\Omega : |x-Q_{\delta}|<\delta^{1/N}\left(r_{2}+\rho_{\delta}\left(\frac{x-Q_{\delta}}{|x-Q_{\delta}|}\right)\right)\right\}
\end{equation}
where $r_2$ is the radius of the ball of measure $2$.

Moreover, as $\delta\to0$, $\delta^{-1/N}|P_\delta-Q_\delta|\to0$ and
\begin{enumerate}
\item
$\|\rho_{\delta}\|_{C^{1,1}}\to 0$;
\item
$\|\rho_{\delta}\|_{L^2} = o(\delta^{1/2N})$.
\end{enumerate} 
\end{theorem}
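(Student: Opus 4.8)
The plan is to prove Theorem~\ref{thm:nearlyspher1} by combining the geometric picture of Theorem~\ref{th:intro_D} with the quantitative expansion of Theorem~\ref{thm:sviluppoesatto}, via a stability (quantitative isoperimetric-type) argument for the eigenvalue functional. The starting point is that $D_\delta$ is a superlevel set of $u_\delta$, which by Theorem~\ref{th:intro_D}(3) is trapped between two balls $B_{r_\pm(\delta)}(P_\delta)$ with $|B_{r_\pm}|=2\delta(1+o(1))$. After rescaling by $\delta^{1/N}$ and translating $P_\delta$ to the origin, we obtain a family of rescaled eigenfunctions $\tilde u_\delta$ on the rescaled domain $\Omega_\delta=\delta^{-1/N}(\Omega-P_\delta)$, whose superlevel set $\tilde D_\delta$ has measure $\to2$ and lies between balls of radii $\to r_2$; moreover $\Omega_\delta$ converges (locally, in $C^{2}$) to a half-space $\R^N_+$. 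The compactness/blow-up analysis of \cite{MPV3} gives that $\tilde u_\delta\to w$ (the half-space minimizer of \eqref{eq:lim_prob}) in a suitable sense. The first step is therefore to upgrade this convergence to $C^{1,\alpha}_{\loc}$, using elliptic regularity for \eqref{eq:Cg} with bounded right-hand side (the weight $m_{D_\delta}$ is merely $L^\infty$, so $C^{1,\alpha}$ is the natural ceiling); since $w$ is radially decreasing with $\nabla w\neq0$ on $\partial B_{r_2}$, the implicit function theorem then writes $\tilde D_\delta$ as a graph $\{|y|<r_2+\rho_\delta(y/|y|)\}$ over the sphere with $\|\rho_\delta\|_{C^{1,\alpha}}\to0$, which already yields a weak form of item~(1) and the existence of $Q_\delta$ (the "center of mass" or a recentering of the superlevel set) with $\delta^{-1/N}|P_\delta-Q_\delta|\to0$.

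The second step is to bootstrap the regularity of $\rho_\delta$ from $C^{1,\alpha}$ to $C^{1,1}$. Here one uses that $\partial D_\delta\cap\Omega$ is a level set $\{u_\delta=t_\delta\}$ of a solution of $-\Delta u_\delta=\od(\delta)m_{D_\delta}u_\delta$; away from $\partial\Omega$, $u_\delta$ solves $-\Delta u_\delta=\od(\delta)u_\delta$ inside $D_\delta$ and $-\Delta u_\delta=-\beta\od(\delta)u_\delta$ outside, so $u_\delta$ is $C^\infty$ on each side and the jump in $\Delta u_\delta$ across $\partial D_\delta$ is bounded; this is exactly the situation of a transmission problem, and standard transmission regularity (e.g.\ the arguments behind free-boundary regularity for Bernoulli-type or obstacle-type problems, or simply Schauder estimates on each side matched through the $C^1$ interface) upgrades $\partial D_\delta$ to $C^{1,1}$ with a uniform bound, hence $\rho_\delta\in C^{1,1}(\sphere^{N-1})$ uniformly. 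Combined with the $C^{1,\alpha}$ smallness and interpolation, this gives $\|\rho_\delta\|_{C^{1,1}}\to0$, i.e.\ item~(1). (If a uniform $C^{1,1}$ bound with vanishing norm is not directly available, one argues by contradiction: a non-vanishing sequence, after further rescaling, would blow up to a nontrivial global transmission solution, contradicting the classification/decay \eqref{eq:decay_w} of $w$.)

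The third and decisive step is the quantitative $L^2$ decay $\|\rho_\delta\|_{L^2}=o(\delta^{1/2N})$, item~(2). The idea is a \emph{selection/optimality} argument: $D_\delta$ minimizes $\lambda(\,\cdot\,)$ among sets of measure $\delta$, while the model competitor is $B_{r_2(\delta)}(P_\delta)\cap\Omega$, a \emph{spherical} cap adapted to the curvature of $\partial\Omega$ at $P_\delta$. Using the refined exponential decay of $u_\delta$ established for Theorem~\ref{thm:sviluppoesatto}, one expands $\lambda(D_\delta)$ around the model half-ball configuration and isolates a quadratic term in $\rho_\delta$. Concretely, writing the Rayleigh quotient for $D_\delta$ and exploiting that $w$ is a (nondegenerate, up to translations along $\partial\R^N_+$) minimizer of the limit problem \eqref{eq:lim_prob}, the second variation of $\lambda$ at $w$ is a nonnegative quadratic form $Q[\rho]$ on the sphere, coercive on the subspace orthogonal to the kernel (which is spanned by the tangential translations $\partial_i w$, $i=1,\dots,N-1$, and corresponds precisely to moving $P_\delta$—absorbed into the choice of $Q_\delta$). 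Matching
\[
\od(\delta) = \IM\delta^{-2/N}\bigl(1-\Gamma H_{P_\delta}\delta^{1/N}+c\,Q[\rho_\delta]+\text{(error)}\bigr)
\]
against the sharp upper bound $\od(\delta)\le\IM\delta^{-2/N}(1-\Gamma\widehat H\delta^{1/N}+o(\delta^{1/N}))$ from Theorem~\ref{thm:sviluppoesatto}(2), and using $H_{P_\delta}\to\widehat H$ from part~(1), forces $Q[\rho_\delta]=o(\delta^{1/N})$; coercivity of $Q$ then gives $\|\rho_\delta\|_{H^1}^2\lesssim\|\rho_\delta\|_{L^2}^2+Q[\rho_\delta]$... more carefully, $Q[\rho]\gtrsim\|\rho\|_{L^2}^2$ modulo the kernel, so $\|\rho_\delta\|_{L^2}^2=o(\delta^{1/N})$, i.e.\ $\|\rho_\delta\|_{L^2}=o(\delta^{1/2N})$.

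The main obstacle is the third step: making the second-order expansion of the Rayleigh quotient rigorous and identifying the quadratic form $Q$ together with its coercivity and kernel. The difficulty is that the weight $m_{D_\delta}$ is discontinuous and depends on $\rho_\delta$ through the \emph{geometry} of the superlevel set, so the usual linearization of $\lambda$ in the weight is singular; one must instead differentiate with respect to domain perturbations (shape derivative of $\lambda(D)$ in the direction of the normal vector field generated by $\rho_\delta$), control the boundary-integral remainder using the $C^{1,1}$ bound and the exponential decay of $w$ and $\nabla w$ from \eqref{eq:decay_w}, and separately handle the contribution of the curved piece of $\partial\Omega$ (which is what produces the $-\Gamma H\delta^{1/N}$ term and must not be conflated with the $\rho_\delta$-contribution). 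Establishing the spectral gap/coercivity of $Q$ on $\{\rho: \int_{\sphere^{N-1}}\rho=\int_{\sphere^{N-1}}\rho\,y_i=0\}$ is a clean but essential computation: it follows from the strict radial monotonicity of $w$ and the simplicity of the first eigenvalue of $-\Delta w=\IM m w$, analogous to the nondegeneracy of the ground state, and it is the only place where the explicit (Bessel-function) structure of $w$ is really used.
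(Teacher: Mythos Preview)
Your plan is essentially the paper's approach: straighten the boundary near $P_\delta$, reflect across $\{z_N=0\}$ to obtain a set $\widetilde D_\delta\subset\R^N$, use transmission regularity to control the free boundary in $C^{1,1}$, and then squeeze the deficit $\lambda(\widetilde D_\delta,\R^N)-\IM$ between the matching upper and lower expansions of $\od(\delta)$ coming from Theorem~\ref{thm:sviluppoesatto} (this squeezing is exactly the content of Remark~\ref{rmk:scorciatoia} combined with Lemma~\ref{lem:quant1} and Proposition~5.3). Two points where your sketch diverges from, or is weaker than, the paper's execution are worth noting.

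First, in your Step~2 the passage ``$C^{1,\alpha}$-smallness plus uniform $C^{1,1}$-bound, by interpolation, gives $C^{1,1}$-smallness'' is false: interpolation only yields $C^{1,\alpha'}$-smallness for $\alpha'<1$, never the endpoint. The paper does not argue this way. Instead it runs the transmission estimate a second time, now on the \emph{derivatives} $\partial_i w_\delta$ (Propositions~\ref{prop:TransmissionPb} and~\ref{prop:TangentDerivaticesC11To0}), and shows directly that the tangential part $\nabla w_\delta-(\nabla w_\delta\cdot n)n$ tends to $0$ in $C^{1,\theta}$ on each side of the interface, hence in $C^{0,1}$ across it; from this, $\|\varphi_\delta\|_{C^{1,1}}\to0$ follows (Proposition~\ref{pro:nearlyspherical}). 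This is precisely where the hypothesis $\partial\Omega\in C^{3,\theta}$ enters, since one needs $A^\delta$ to have $C^{0,\theta}$ derivatives in the tangential directions after the even reflection. Your proposed blow-up/contradiction fallback might be made to work, but it is not what the paper does.

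Second, the ``coercivity of the second variation $Q$ modulo translations'' that you isolate as the crux is not proved here: it is exactly Theorem~\ref{thm:teoFV}, quoted as a black box from \cite{ferreri_verzini2} and applied to the reflected set in $\R^N$ (with barycenter at the origin, which kills the full translation kernel; the even symmetry already kills the $e_N$-mode). No Bessel-function computation is needed in the present paper. The choice $Q_\delta=\Phi_\delta(\bari(\widetilde D_\delta))$ (Lemma~\ref{lem:bari}) is made precisely so that the barycenter hypothesis of Theorem~\ref{thm:teoFV} is met.
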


The proof of this theorem is based on the expansion of $\od(\delta)$ obtained in Theorem 
\ref{thm:sviluppoesatto} and on sharp quantitative estimates about problem \eqref{eq:defIM}, 
adapted from \cite{ferreri_verzini2}, 
where the analogous problem is considered in the case of Dirichlet boundary conditions. 
Such adaptation is highly nontrivial and technically demanding: in the Neumann case concentration happens at 
the boundary, and quantitative estimates are strongly affected by the local regularity and geometry of 
$\partial\Omega$.
In particular, 
the point $Q_\delta$ can be seen as a sort of projection on $\partial\Omega$ of the barycenter of 
$D_\delta$. Its precise definition is provided in Lemma \ref{lem:bari} ahead, and its role is to 
prevent translations of optimizers associated with \eqref{eq:defIM}. Moreover, we remark that in 
principle the eigenfunctions $u_\delta$, for which $D_\delta$ is a superlevel set, are naturally 
uniformly bounded only in $C^{1,\alpha}$, $\alpha<1$; the $C^{1,1}$ control in the above theorem is 
more delicate to be obtained, and it requires the use of recent results about the regularity of 
transmission problems, see \cite{Caffarelli2021:TransmissionProblems,Dong2021:TransmissionProblems}. 
Actually, this is the only part of the argument which requires the further regularity assumptions on $\partial\Omega$; on the other hand, such further regularity should reflect also on that of the 
free boundary, see Remark \ref{rmk:piùregolare}.
Furthermore, by Gagliardo-Nirenberg inequality, 
it is possible to combine the estimates in Theorem \ref{thm:nearlyspher1} to obtain quantitative information concerning the rate of decay of the $C^{1,\alpha}$ norm of $\rho_{\delta}$, for $\alpha<1$ 
(see Remark~\ref{rmk:GagliardoNirenberg}):
\[
\|\rho_\delta\|_{C^{1,\alpha}}=o\left(\delta^{\frac{(1-\alpha)}{N(4+N)}}\right) \quad \forall\, \alpha\in (0,1).
\]

In view of Theorems \ref{thm:sviluppoesatto}, \ref{thm:nearlyspher1}, we have a fairly 
complete picture about the shape and location of the optimal favorable shape $D_\delta$, in 
the small volume regime, in the case of Neumann boundary conditions. As we mentioned, in the 
case of Dirichlet boundary conditions, the same issues have been recently faced in 
\cite{ferreri_verzini2}: also in this case the optimal shape is $C^{1,1}$ nearly spherical, 
but asymptotically located at a point inside $\Omega$ with maximal distance from the boundary. 
As far as the asymptotic location and shape is concerned, these 
results completely agree with those in dimension $N=1$: it is well known that, 
for any $\delta$, the optimal shape $D_\delta$ is an interval positioned at the boundary of 
the interval $\Omega$ (for Neumann boundary conditions) or at its center (for Dirichlet ones). 
From this perspective, these results also provide positive answers to questions raised in the 
literature \cite{MR1105497,ly,MAZARI2022401}, which in turn were partially motivated by the above 
mentioned one-dimensional description. 

On the other hand, the quantitative estimates of the spherical asymmetry show the emergence of a 
phenomenon which is peculiar to the case of dimension $N\ge2$. Indeed, in dimension $N=1$, both 
$\Omega$ and $D_\delta$ are intervals, i.e. balls, with no spherical asymmetry. In dimension $N\ge2$, 
this is possible only if $\Omega$ is a ball and Dirichlet boundary conditions are assumed, see 
\cite{llnp}. For general $\Omega$, or for Neumann conditions, the spherical asymmetry 
is non-trivial, and it exhibits very different decay rates. Indeed, in the Dirichlet case,  
the decay fully inherits that of the solution of the limit problem, namely  of exponential 
rate; in the case of Neumann boundary conditions, it is triggered by the geometry and the regularity 
of the boundary $\partial\Omega$. This phenomenon enlighten once again analogies with the semilinear 
case.

{\bf Structure of the paper.}  Section~\ref{sec:expdecay} is devoted to the proof of the exponential decay of the eigenfunction $u_\delta$ and of $w_\delta$, its counterpart in the blow-up configuration.
Thanks to this information, in Section~\ref{sec:boundbelow} we can prove Theorem~\ref{thm:sviluppoesatto}, in particular the bound from below in the expansion of $\od(\delta)$.
Section~\ref{sec:polar} is devoted to prove that $D_\delta$ is nearly spherical and to show that its parametrization is actually $C^{1,1}$, exploiting the regularity results for transmission problems.
Finally, in Section~\ref{sec:quantitative} we use the quantitative estimates from~\cite{ferreri_verzini2} to conclude the proof of Theorem~\ref{thm:nearlyspher1}.

%\tableofcontents

\bigskip
\textbf{Notation.}
\begin{itemize}
\item $|\cdot|$ denotes the Lebesgue $N$ dimensional measure and $\Hcal^{N-1}(\cdot)$ the Hausdorff $N-1$ dimensional measure.
\item For a function $f$, its positive/negative parts are denoted as 
$f^\pm(x)=\max\{\pm f(x) , 0\}$.
\item The characteristic function of a set $E$ is denoted by $\ind{E}$. 
\item $B_r(x)$ denotes the ball of radius $r>0$ centered at $x\in \R^N$. If $x=0$, we often write 
$B_r=B_r(0)$. We call $\omega_N = |B_1|$ the measure of a ball of radius $1$.
\item We denote with $r_{2}>0$ the radius of the ball $B_{r_{2}}$, such that $|B_{r_{2}}|=2$.
\item $\R^{N}_+ = \R^{N-1}\times \R_+$, $B^{+}_{r}=B_{r}\cap (\R^{N}_+)$.
\item $H_P$ denotes the mean curvature of $\partial \Omega$ at $P\in\partial\Omega$, and $\widehat H = \max_{P\in\partial\Omega} H_P$.
\item  Given a set $A\subset \R^N$, we denote $\bari(A)$ its barycenter.
\item $C,C_1,C',\dots$ denote any (non-negative) universal constant, which may also change 
from line to line.
\end{itemize}

\section{Uniform exponential decay of the eigenfunctions}\label{sec:expdecay}

Let us recall that, for $\delta>0$ small, $u_\delta$ denotes the positive, $L^2$-normalized  
eigenfunction associated with $\od(\delta)=\lambda(D_\delta,\Omega)$, with unique maximum point 
$P_\delta\in\partial D_\delta \cap \partial \Omega$. 

The starting point of this section is the following version of a result which was obtained in \cite{MPV3}.

\begin{theorem}[{\cite[Proposition~4.11]{MPV3}}]\label{thm:2.3}
With the notation above, for all $\eta>0$ there exist $\delta_0>0$, $\widehat C>0$ such that, 
for all $\delta  \in  (0, \delta_0)$, there is a subdomain $\Omega_\delta^{(i)}\subset \Omega$  satisfying:
\begin{enumerate}
\item[(i)] $P_\delta\in \partial \Omega_\delta^{(i)}$ and $\diam(\Omega_\delta^{(i)})\leq \widehat C\delta^{1/N}$,
\item[(ii)] $\|u_\delta\|_{L^\infty(\Omega)}\leq C^*\delta^{-1/2}$,
\item[(iii)] $|u_\delta(x)|\leq C^* \delta^{-1/2}\eta e^{-\frac{\mu_1d(x)}{\delta^{1/N}}}$, for all $x\in \Omega\setminus \Omega_\delta^{(i)}$,
\end{enumerate}
where $d(x):=\min\{\dist(x,\partial \Omega_\delta^{(i)}),\eta_0\}$ and $C^*, \mu_1,\eta_0$ are positive 
constants depending only on $\Omega$.
\end{theorem}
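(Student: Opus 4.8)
We follow the strategy of \cite[Proposition~4.11]{MPV3}: a blow-up analysis near $P_\delta$, used to feed an Agmon-type barrier argument in the region where the optimal weight is negative. Throughout we use Theorem~\ref{th:intro_D}, which gives the inclusion $D_\delta\subset B_{r_+(\delta)}(P_\delta)\cap\Om$ with $r_+(\delta)=r_2\delta^{1/N}(1+o_\delta(1))$, together with the bound $\od(\delta)\le\IM\delta^{-2/N}(1+o_\delta(1))$. As for (ii), the function $u_\delta$ solves $-\lapl u_\delta=\od(\delta)m_{D_\delta}u_\delta$ with potential bounded by $\beta\od(\delta)\le C\delta^{-2/N}$ and $\|u_\delta\|_{L^2(\Om)}=1$; a Moser iteration on balls of radius $\sim1$ (handling $\partial_\nu u_\delta=0$ by even reflection in a $C^{2,1}$ boundary chart) yields $\|u_\delta\|_{L^\infty(\Om)}\le C(1+\od(\delta))^{N/4}\le C^*\delta^{-1/2}$.

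\emph{Blow-up and choice of $\Om_\delta^{(i)}$.} Flatten $\partial\Om$ near $P_\delta$ and set $w_\delta(y):=\delta^{1/2}u_\delta(P_\delta+\delta^{1/N}\Psi_\delta(y))$, with $\Psi_\delta$ the rescaled chart. Then $w_\delta$ is defined on domains invading $\R^N_+$ and solves a divergence-form elliptic equation with coefficients $O(\delta^{1/N})$-close to the Euclidean ones, with zeroth-order coefficient $\delta^{2/N}\od(\delta)m_{D_\delta}(P_\delta+\delta^{1/N}\Psi_\delta(\cdot))$ and $\delta^{2/N}\od(\delta)\to\IM$; moreover $\int|w_\delta|^2=1$. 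By \cite{MPV3}, $w_\delta\to w$ in $C^1_{\loc}(\overline{\R^N_+})$ and in $H^1$, where $w$ solves \eqref{eq:lim_prob}. Given $\eta>0$, the decay \eqref{eq:decay_w} lets us fix $R=R(\eta)>r_2$ with $w\le\eta/4$ on $\{|y|=R\}$, whence $w_\delta\le\eta/2$ there for $\delta$ small. Set $\Om_\delta^{(i)}:=\Om\cap B_{R\delta^{1/N}}(P_\delta)$: this is a subdomain of $\Om$, $P_\delta\in\partial\Om_\delta^{(i)}$, and $\diam(\Om_\delta^{(i)})\le 2R\delta^{1/N}=:\widehat C\delta^{1/N}$, which is (i).

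\emph{Exponential decay.} For $\delta$ small, $D_\delta\subset B_{r_+(\delta)}(P_\delta)\subset B_{R\delta^{1/N}}(P_\delta)$, so $m_{D_\delta}\equiv-\beta$ on $\Om\setminus\Om_\delta^{(i)}$, where $-\lapl u_\delta+\beta\od(\delta)u_\delta=0$ and $\partial_\nu u_\delta=0$ on $\partial\Om$. Fix $\mu_1\in(0,\sqrt{\IM\beta})$ and $\eta_0>0$ small enough that boundary normal coordinates are defined in the $\eta_0$-neighborhood of $\partial\Om$; let $d(x):=\min\{\dist(x,\partial\Om_\delta^{(i)}),\eta_0\}$ and $\overline u(x):=C^*\delta^{-1/2}\eta\,e^{-\mu_1 d(x)/\delta^{1/N}}$. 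Passing to the rescaled variable, and using $|\grad d|\le1$, $\lapl d\ge-C$ on $\{d<\eta_0\}$, and that the chart perturbations are $O(\eta_0)+O(\delta^{1/N})$ there, one gets
\[
-\lapl\overline u+\beta\od(\delta)\overline u\ \ge\ \delta^{-2/N}\bigl(\beta\,\delta^{2/N}\od(\delta)-\mu_1^2-C\eta_0-o_\delta(1)\bigr)\overline u\ \ge\ 0
\]
for $\delta,\eta_0$ small, since $\beta\,\delta^{2/N}\od(\delta)\to\IM\beta>\mu_1^2$. Choosing $d$ compatibly with the even reflection across the flattened boundary makes $\partial_\nu\overline u\le0$ on $\partial\Om$; on $\partial\Om_\delta^{(i)}\cap\Om$ one has $u_\delta\le C^*\delta^{-1/2}(\eta/2)\le\overline u$ by the previous paragraph, and $u_\delta\le\overline u$ on $\{d=\eta_0\}$ by (ii). The weak maximum principle for $-\lapl+\beta\od(\delta)$ on $\Om\setminus\Om_\delta^{(i)}$ then yields $u_\delta\le\overline u$, which is (iii).

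\emph{Main obstacle.} The delicate step is the comparison across the curved Neumann boundary: a barrier built from the Euclidean distance to $\Om_\delta^{(i)}$ need not satisfy $\partial_\nu\le0$ on $\partial\Om$, and flattening $\partial\Om$ introduces lower-order terms in the operator. Both issues are controlled precisely because $d$ is truncated at the fixed $\eta_0$: inside $\{d<\eta_0\}$ one works in boundary normal and rescaled coordinates where the metric distortion is $O(\eta_0)+O(\delta^{1/N})$, negligible against the rescaled potential $\beta\,\delta^{2/N}\od(\delta)\to\IM\beta>\mu_1^2$; and the truncation is consistent with the statement, since for $\dist(x,\partial\Om_\delta^{(i)})\ge\eta_0$ the claimed bound only invokes $d=\eta_0$, i.e. (ii) together with monotonicity of the exponential. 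Ensuring that all the constants $C^*,\mu_1,\eta_0,\widehat C$ are uniform in $\delta$ relies exactly on the strong ($C^1_{\loc}$) blow-up convergence of Section~\ref{sec:expdecay}'s input from \cite{MPV3}.
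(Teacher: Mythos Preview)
The paper does not contain a proof of this statement: Theorem~\ref{thm:2.3} is quoted verbatim from \cite[Proposition~4.11]{MPV3} and used as a black box to launch the bootstrap of Theorem~\ref{thm:expdecayudelta}. So there is nothing in the present paper to compare your argument against directly.

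That said, your sketch follows the expected Ni--Takagi/Agmon strategy (blow-up to gain smallness on $\partial\Omega_\delta^{(i)}$, then a supersolution barrier in the region where $m_{D_\delta}\equiv-\beta$), and the scalings are correct: the crucial algebraic point is $\mu_1^2<\IM\beta=\lim_{\delta\to0}\beta\,\delta^{2/N}\od(\delta)$, which you state. Two places deserve more care. First, the function $d(x)=\min\{\dist(x,\partial\Omega_\delta^{(i)}),\eta_0\}$ is only Lipschitz, so the computation of $\lapl\overline u$ must be done either in the viscosity sense or after a smoothing; you also need the \emph{sign} of $\lapl d$ (not merely a lower bound by $-C$), since the term $\mu_1\delta^{-1/N}\lapl d$ is of the same order $\delta^{-2/N}$ as the leading terms near $\partial\Omega_\delta^{(i)}$ --- fortunately $\lapl d\ge0$ there (distance from a small convex set), so this helps rather than hurts. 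Second, the Neumann compatibility $\partial_\nu\overline u\le0$ on $\partial\Omega$ is the genuinely delicate point, as you correctly flag: the Euclidean distance to $\partial\Omega_\delta^{(i)}$ does \emph{not} in general satisfy $\partial_\nu d\ge0$ on $\partial\Omega$, and your remedy (``choose $d$ compatibly with the even reflection'') is only indicated, not carried out. In \cite{MPV3} this is handled by working from the outset in the straightened/reflected coordinates, where the barrier is manifestly even in $y_N$ and the Neumann condition becomes automatic; your sketch would be complete once you make that reduction explicit.
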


With a careful look at the proof of~\cite[Theorem~2.3]{nitakagi_cpam} and using the same approach of~\cite[Lemma~2.3]{delpino_flores}, we can prove the following decay result for $u_\delta$.

\begin{theorem}\label{thm:expdecayudelta}
There exist universal constants $C_1,C_2>0$ and $\delta_0>0$ such that,  for all $0<\delta< \delta_0$,
\begin{equation}\label{eq:expdecayudelta}
u_\delta(x)\leq C_1 \delta^{-1/2} e^{-C_2|x-P_\delta|\delta^{-1/N}},\qquad  \text{for all }x\in \Omega.
\end{equation}
\end{theorem}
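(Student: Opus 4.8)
The plan is to upgrade the decay estimate of Theorem~\ref{thm:2.3}, which is stated with an arbitrarily small prefactor $\eta$ but only in terms of the distance $d(x)$ from the inner subdomain $\Omega_\delta^{(i)}$ (truncated at $\eta_0$), to a clean exponential decay from the single point $P_\delta$, valid on the whole of $\Omega$, with a fixed rate $C_2$ and fixed prefactor. First I would fix $\eta$ small (to be chosen), obtaining from Theorem~\ref{thm:2.3} the set $\Omega_\delta^{(i)}$ with $\diam(\Omega_\delta^{(i)}) \le \widehat C \delta^{1/N}$, $P_\delta \in \partial\Omega_\delta^{(i)}$, the $L^\infty$ bound $\|u_\delta\|_{L^\infty} \le C^*\delta^{-1/2}$, and the decay $u_\delta(x) \le C^*\delta^{-1/2}\eta\, e^{-\mu_1 d(x)/\delta^{1/N}}$ outside $\Omega_\delta^{(i)}$. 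The first reduction is geometric: since $\diam(\Omega_\delta^{(i)})\le\widehat C\delta^{1/N}$ and $P_\delta\in\partial\Omega_\delta^{(i)}$, for $x\notin\Omega_\delta^{(i)}$ one has $\dist(x,\partial\Omega_\delta^{(i)}) \ge |x-P_\delta| - \widehat C\delta^{1/N}$, so on the region $|x-P_\delta| \ge (\widehat C + 1)\delta^{1/N}$ (say) and $|x-P_\delta|\delta^{-1/N}$ bounded by the scale at which $d(x)$ saturates at $\eta_0$, the estimate (iii) already gives $u_\delta(x) \le C^*\delta^{-1/2}\eta\, e^{\mu_1\widehat C}e^{-\mu_1|x-P_\delta|\delta^{-1/N}}$, which is of the desired form with $C_2 = \mu_1$. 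On the complementary inner region $|x-P_\delta|\le(\widehat C+1)\delta^{1/N}$ the exponential $e^{-C_2|x-P_\delta|\delta^{-1/N}}$ is bounded below by a positive constant, so there the crude bound (ii) already implies \eqref{eq:expdecayudelta} after enlarging $C_1$.

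The genuinely new content, and the main obstacle, is the far region where the truncation $d(x)=\eta_0$ is active, i.e.\ $|x-P_\delta|\delta^{-1/N}$ is large (of order $\eta_0\delta^{-1/N}$ or beyond): there Theorem~\ref{thm:2.3}(iii) only gives the fixed small bound $C^*\delta^{-1/2}\eta e^{-\mu_1\eta_0/\delta^{1/N}}$, which decays in $\delta$ but does not exhibit decay in $|x-P_\delta|$. To propagate exponential decay all the way out, I would run a comparison/barrier argument in the blow-up variables. Rescale by setting $v_\delta(y) := \delta^{1/2}u_\delta(P_\delta + \delta^{1/N}y)$ on $\Omega_\delta := \delta^{-1/N}(\Omega - P_\delta)$; then $v_\delta$ solves $-\Delta v_\delta = \od(\delta)\delta^{2/N} m_{D_\delta}(P_\delta+\delta^{1/N}y)\, v_\delta$ with homogeneous Neumann conditions on $\partial\Omega_\delta$, where $\od(\delta)\delta^{2/N}\to\IM$ by Theorem~\ref{th:intro_D}(\ref{punto4}) and the weight equals $-\beta$ outside the rescaled set $\delta^{-1/N}(D_\delta-P_\delta)$, which is contained in a fixed ball $B_{R_0}$ by Theorem~\ref{th:intro_D}(3). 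Hence outside $B_{R_0}$ we have $-\Delta v_\delta + \beta\od(\delta)\delta^{2/N} v_\delta = 0$ with a coefficient bounded below by $\tfrac12\IM\beta>0$ for $\delta$ small. I would then compare $v_\delta$ with the barrier $\Psi(y) = A e^{-c|y|}$ for suitable fixed $c < \sqrt{\IM\beta}$ (so that $-\Delta\Psi + \tfrac12\IM\beta\Psi \ge 0$ for $|y|$ large, using $\Delta(e^{-c|y|}) = (c^2 - c(N-1)/|y|)e^{-c|y|}$), choosing $A$ so that $\Psi \ge v_\delta$ on the sphere $|y| = R_1$ for some fixed $R_1 \ge R_0$ where $v_\delta$ is already controlled by the previous paragraph (there $v_\delta\le C^*$, so $A = C^* e^{cR_1}$ works). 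The Neumann boundary $\partial\Omega_\delta$ requires care: since $\Omega$ is bounded with $C^{2,1}$ boundary, $\partial\Omega_\delta$ flattens near $y=0$ and recedes, and one can either use that $\partial_\nu\Psi \le 0$ on the relevant portion of $\partial\Omega_\delta$ (because $\Psi$ is radially decreasing and $\partial\Omega_\delta$ is, up to lower order, a hyperplane through the origin so the outer normal is almost tangential to the spheres far out) or apply the maximum principle on the bounded pieces $\Omega_\delta \cap (B_R \setminus B_{R_1})$ and let $R\to\infty$ using $v_\delta\in L^2$ together with elliptic estimates to kill the boundary term at $|y|=R$. The weak maximum principle for $-\Delta + c(y)$ with $c\ge0$ then yields $v_\delta(y) \le C^* e^{cR_1} e^{-c|y|}$ for all $y\in\Omega_\delta$ with $|y|\ge R_1$; undoing the scaling gives $u_\delta(x) \le C^* e^{cR_1}\delta^{-1/2} e^{-c|x-P_\delta|\delta^{-1/N}}$, and together with the inner estimate this is exactly \eqref{eq:expdecayudelta} with $C_2 = c$ and $C_1$ a suitably enlarged universal constant.

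I expect the delicate points to be (a) justifying the comparison argument up to the Neumann boundary $\partial\Omega_\delta$ — handling the contribution of $\partial_\nu$ on the curved, receding boundary, which is where one genuinely uses the regularity of $\partial\Omega$ and the structure of the blow-up domain — and (b) making sure all constants ($R_0$, $R_1$, $c$, $A$, $\delta_0$) are chosen in the right order and are universal, i.e.\ depend only on $\Omega$, $N$, $\beta$ and not on $\delta$. A cleaner alternative to the pointwise barrier, matching the references cited (the approach of \cite[Theorem~2.3]{nitakagi_cpam} and \cite[Lemma~2.3]{delpino_flores}), is to derive the decay from an $L^2$ exponential-weight (Agmon-type) estimate: multiply the equation for $v_\delta$ by $e^{2\phi}v_\delta$ with $\phi(y) = c|y|$ (suitably truncated and regularized), integrate by parts using the Neumann condition to discard the boundary term, absorb the gradient-of-weight term using $|\nabla\phi|^2 = c^2 < \IM\beta$, and conclude $\int_{\Omega_\delta\setminus B_{R_1}} e^{2c|y|}v_\delta^2 \le C$; then a local $L^\infty$–$L^2$ elliptic estimate on unit balls upgrades this to the pointwise bound $v_\delta(y)\le Ce^{-c|y|}$. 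Either route reaches the same conclusion; the main work is in the boundary bookkeeping and the uniform choice of constants.
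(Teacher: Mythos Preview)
Your two proposed routes (barrier comparison and Agmon-type weighted $L^2$) are both different from what the paper actually does. The paper follows the del Pino--Flores strategy literally: it proves a \emph{doubling inequality} by contradiction and blow-up. Precisely, Step~1 extracts from Theorem~\ref{thm:2.3} only the uniform smallness $\delta^{1/2}u_\delta(x)\le\eps$ for $|x-P_\delta|>R(\eps)\delta^{1/N}$; Step~2 shows there exist fixed $R_0,\nu_0$ with
\[
\sup_{|x-P_\delta|>R\delta^{1/N}}\delta^{1/2}u_\delta \ \ge\ 2\sup_{|x-P_\delta|>(R+\nu_0)\delta^{1/N}}\delta^{1/2}u_\delta
\]
for all $R>R_0$, by assuming failure along sequences $R_n,\nu_n\to\infty$, $\delta_n\to0$, rescaling around the offending points $x_n$, and passing to a bounded nonnegative solution of $-\Delta v=-\IM\beta v$ on $\R^N$ or $\R^N_+$ with $v(0)=1$, which is impossible (Liouville). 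Iterating Step~2 gives geometric decay $2^{-k}$ on annuli of width $\nu_0$, hence the exponential bound. This sidesteps entirely the Neumann-boundary issue you flagged: no barrier is ever compared on $\partial\Omega_\delta$, and the boundary enters only through the limiting Liouville problem, where it becomes the flat Neumann condition on $\R^{N-1}$.

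Regarding your proposals: the Agmon route is a legitimate alternative and the Neumann condition indeed kills the boundary term when you integrate by parts, so that would go through with standard bookkeeping. Your barrier route, however, has a real gap at the point you yourself identify: for a general $\Omega$ there is no reason for the outer normal to $\partial\Omega_\delta$ to satisfy $y\cdot\nu\le0$ away from the origin, so $\partial_\nu(Ae^{-c|y|})$ need not have the sign required by the Hopf comparison; ``almost tangential far out'' is not true for the rescaled boundary of a bounded domain. That difficulty is genuine, not just bookkeeping, and is precisely what the paper's iteration-by-contradiction argument avoids.
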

\begin{proof}

{\it Step 1. For all $\eps>0$ there is $R=R(\eps)>0$ such that, for $\delta$ sufficiently small we have $\delta^{1/2}|u_\delta(x)|\leq  \eps$ for $|x
-P_\delta|>R\delta^{1/N}$.} 

This follows thanks to Theorem~\ref{thm:2.3}, choosing $\eta=\tfrac{\eps}{C^*}$. In fact, thanks to $(i)$, we deduce that if $|x-P_\delta|>R\delta^{1/N}$ for $R>2\widehat C$ (independent of $\delta$), then $x\in \Omega\setminus \Omega_\delta^{(i)}$.
Eventually, thanks to $(iii)$, we conclude the claim, since the exponential is for sure less than or equal to $1$.

{\it Step 2. There exist $R_0>0$ and $\nu_0>0$ such that for all $R>R_0$ and $\delta$ sufficiently small we have \[
\sup_{|x-P_\delta|>R\delta^{1/N}}\delta^{1/2}u_\delta(x)\geq 2\sup_{|x-P_\delta|>(R+\nu_0)\delta^{1/N}}\delta^{1/2}u_\delta(x).
\]
}

Let us assume, for the sake of contradiction, that there are sequences $R_n\to+\infty$, $\nu_n\to+\infty$, $\delta_n\to 0$ and $x_n\in \Omega$ such that $|x_n-P_n|\geq (R_n+\nu_n)\delta^{1/N}$, being $P_n\in \partial\Omega$ the maximum point for $u_{\delta_n}$ and \[
\delta_n^{1/2}u_{\delta_n}(x_n)=\mu_n>\frac{M_n}{2},\qquad M_n=\sup_{|x-P_n|>R_n\delta_n^{1/N}}\delta_n^{1/2}u_{\delta_n}(x).
\]

For the uniform decay proved in Step 1, we deduce that $\mu_n,M_n\to 0$ as $n\to+\infty$.
We define the auxiliary function\[
v_n(y)=\delta_n^{1/2}\frac{u_{\delta_n}(\delta_n^{1/N}y+x_n)}{\mu_n},
\]
then $v_n(0)=1$, $0< v_n< 2$ if  $|y|<\nu_{n}$ (thus $|\delta_n^{1/N}y+x_n-P_n|> R_n\delta_n^{1/N}$)
and $v_n$ solves the equation\[
\begin{cases}
-\Delta v_n(y)=\delta_n^{2/N}\od(\delta_n) m_{\delta_n}(\delta_n^{1/N}y+x_n)v_n(y),\qquad &\text{for }y\in \frac{\Omega-x_n}{\delta_n^{1/N}},\\
\partial_\nu v_n=0,\qquad &\text{on }\partial \frac{\Omega-x_n}{\delta_n^{1/N}}.
\end{cases}
\]
For every compact set $K\subset \R^N$,  $m_{\delta_n}=-\beta$ for $n$ sufficiently big in $K\cap (\delta_n^{-1/N}(\Omega-x_n))$.
Then we can use Theorem \ref{th:intro_D} to pass to the limit as $n\to+\infty$; we obtain that 
$v_n\to v$ locally uniformly (and locally $W^{2,p}$) to a positive solution to either 
\[
\begin{cases}
-\Delta v=-\IM\beta  v,\qquad &\text{in }\R^{N}_+,\\
\partial_\nu v=0,\qquad &\text{on }\R^{N-1},
\end{cases}
\qquad \text{or}\qquad 
-\Delta v=-\IM \beta v,\qquad \text{in }\R^N.\\
\]
This gives a contradiction, because the only bounded nonnegative solution of both the above problems is the trivial one, while $v(0)=1$.

{\it Conclusion.} Iterating Step $2$ one obtains that, for all $k\in \N$ (taking $\nu_0>R_0$, which is always possible), \[
\sup_{|x-P_\delta|>k\nu_0\delta^{1/N}}u_\delta(x)\leq 2^{-k}\sup_{|x-P_\delta|>\nu_0\delta^{1/N}}u_\delta(x)\leq 2^{-k}\|u_\delta\|_{L^\infty(\Omega)}.
\]
For all $x\in \Omega$, we can find $k\in \N$ such that \[
k\leq \frac{|x-P_\delta|}{\nu_0\delta^{1/N}}\leq k+1;
\]
recalling also  conclusion (ii) of Theorem \ref{thm:2.3}, we obtain
\[
u_\delta(x)\leq 2^{-k}C^*\delta^{-1/2}\leq C\delta^{-1/2}e^{-\frac{|x-P_\delta|}{\nu_0\delta^{1/N}}},
\]
which in turn yields~\eqref{eq:expdecayudelta}.
\end{proof}

As a consequence, we have the following estimates.

\begin{corollary}\label{cor:decayint}
There exist $\delta_0, R_0>0$ and universal constants $C_1,C_2$ such that, for all $R>R_0$ and $\delta\leq \delta_0$ we have 
\begin{equation}\label{eq:decayint}
\begin{split}
\int_{\{x\in \Omega :\, |x-P_{\delta}|>R\delta^{1/N}\}}|u_\delta|^2\leq  C_1 \delta^{-1} e^{-2C_2R},\\
\int_{\{x\in \Omega :\, |x-P_{\delta}|>R\delta^{1/N}\}}|\nabla u_\delta|^2 \leq  C_1 \delta^{-1} e^{-2C_2R}.
\end{split}
\end{equation}
\end{corollary}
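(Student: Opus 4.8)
The plan is to deduce both integral estimates in Corollary~\ref{cor:decayint} directly from the pointwise exponential decay of $u_\delta$ established in Theorem~\ref{thm:expdecayudelta}, using for the gradient bound a Caccioppoli-type (energy) inequality that transfers the decay of $u_\delta$ in $L^2$ to a decay of $\nabla u_\delta$ in $L^2$ on slightly smaller regions.

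First I would treat the $L^2$ estimate for $u_\delta$ itself. By Theorem~\ref{thm:expdecayudelta}, for $x \in \Omega$ with $|x - P_\delta| > R\delta^{1/N}$ we have $u_\delta(x)^2 \le C_1^2 \delta^{-1} e^{-2 C_2 |x - P_\delta|\delta^{-1/N}}$. Integrating over the annular region and changing variables $x = P_\delta + \delta^{1/N} y$ gives
\[
\int_{\{|x-P_\delta| > R\delta^{1/N}\}\cap\Omega} u_\delta^2 \, dx \le C_1^2 \delta^{-1} \delta \int_{\{|y| > R\}} e^{-2C_2|y|}\, dy = C_1^2 \int_{\{|y|>R\}} e^{-2C_2|y|}\, dy,
\]
and the tail of this (convergent) integral is bounded by $C e^{-2C_2 R}$ for $R$ large; adjusting the constants $C_1, C_2$ yields the first line of \eqref{eq:decayint}. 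Here one should note that the spatial dimension contributes only a polynomial factor $|y|^{N-1}$ in spherical coordinates, which is absorbed by shrinking $C_2$ slightly (this is why $R_0$ must be taken large enough).

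For the gradient estimate I would use the equation. Fix $R > R_0$ and pick a cutoff function $\psi \in C^\infty(\R^N)$ with $\psi \equiv 0$ on $B_{(R-1)\delta^{1/N}}(P_\delta)$, $\psi \equiv 1$ outside $B_{R\delta^{1/N}}(P_\delta)$, and $|\nabla\psi| \le C\delta^{-1/N}$. Testing \eqref{eq:Cg} (with $\lambda = \od(\delta)$, $D = D_\delta$) against $\psi^2 u_\delta$ and using the Neumann condition to kill the boundary term, one gets
\[
\int_\Omega \psi^2 |\nabla u_\delta|^2\, dx = -2\int_\Omega \psi u_\delta\, \nabla\psi\cdot\nabla u_\delta\, dx + \od(\delta)\int_\Omega m_{D_\delta}\,\psi^2 u_\delta^2\, dx.
\]
The first term on the right is absorbed via Young's inequality, leaving $\tfrac12\int \psi^2|\nabla u_\delta|^2 \le 2\int |\nabla\psi|^2 u_\delta^2 + \od(\delta)\int \psi^2 u_\delta^2$. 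Since $|\nabla\psi|^2 \le C\delta^{-2/N}$ is supported in the annulus $\{(R-1)\delta^{1/N} < |x - P_\delta| < R\delta^{1/N}\}$, and $\od(\delta) \le \IM\delta^{-2/N}(1+o(1)) \le C\delta^{-2/N}$ by Theorem~\ref{th:intro_D}\eqref{punto4}, both terms are controlled by $C\delta^{-2/N}\int_{\{|x-P_\delta|>(R-1)\delta^{1/N}\}} u_\delta^2$. Applying the already-established first estimate (with $R$ replaced by $R - 1$, which only costs a factor $e^{2C_2}$ absorbed into the constant) gives
\[
\int_{\{|x-P_\delta| > R\delta^{1/N}\}\cap\Omega} |\nabla u_\delta|^2\, dx \le \int_\Omega \psi^2 |\nabla u_\delta|^2\, dx \le C\delta^{-2/N}\cdot C_1\delta^{-1}e^{-2C_2(R-1)}.
\]
To match the stated form $C_1\delta^{-1}e^{-2C_2 R}$ one simply replaces the decay rate $C_2$ by a slightly smaller $C_2'$, since $\delta^{-2/N} e^{-2C_2(R-1)} \le C e^{-2C_2' R}$ for $R$ large once we accept losing an arbitrarily small amount in the exponent — alternatively, and more cleanly, one absorbs the extra $\delta^{-2/N}$ by invoking the pointwise bound of Theorem~\ref{thm:expdecayudelta} at the slightly larger radius $(R-1)\delta^{1/N}$ before integrating, so that the integrand already carries an extra exponential margin $e^{-2C_2\cdot c\,\delta^{1/N}\cdot\delta^{-1/N}}$... which does not help; so the honest route is to relabel the constants.

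The main obstacle is bookkeeping rather than conceptual: one must ensure the polynomial-in-$\delta$ prefactors ($\delta^{-1}$ from the $L^\infty$ normalization, $\delta^{-2/N}$ from $\od(\delta)$ and from $|\nabla\psi|^2$) are all dominated by an exponential in $R$ after absorbing a fixed fraction of the decay rate, which forces the threshold $R_0$ to be chosen large and the universal constants $C_1, C_2$ to be adjusted. No new analytic input beyond Theorem~\ref{thm:expdecayudelta}, the Caccioppoli estimate, and the eigenvalue upper bound from Theorem~\ref{th:intro_D} is needed.
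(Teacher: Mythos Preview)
Your overall strategy is correct and close to the paper's, but the final step of the gradient estimate contains a genuine gap, and your own proposed repair is wrong.

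After the Caccioppoli inequality you reach
\[
\int_{\{|x-P_\delta|>R\delta^{1/N}\}} |\nabla u_\delta|^2 \le C\delta^{-2/N}\int_{\{|x-P_\delta|>(R-1)\delta^{1/N}\}} u_\delta^2,
\]
and you then insert the \emph{stated} first inequality with its $\delta^{-1}$ factor, obtaining $C\delta^{-1-2/N}e^{-2C_2(R-1)}$. Your suggestion that ``$\delta^{-2/N}e^{-2C_2(R-1)}\le Ce^{-2C_2'R}$ for $R$ large'' is false: for any fixed $R>R_0$ the left side diverges as $\delta\to 0$ while the right side is bounded. No relabeling of constants fixes this.

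The correct repair is already in your hands. Your change-of-variables argument for the first inequality actually yields
\[
\int_{\{|x-P_\delta|>(R-1)\delta^{1/N}\}} u_\delta^2 \le C e^{-c(R-1)}
\]
with \emph{no} negative power of $\delta$ (you proved something stronger than the statement). Plugging this sharper bound into the Caccioppoli estimate gives $C\delta^{-2/N}e^{-c(R-1)}$, and since $N\ge 2$ implies $\delta^{-2/N}\le \delta^{-1}$ for $\delta\le 1$, this is $\le C'\delta^{-1}e^{-cR}$ as claimed.

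The paper sidesteps this bookkeeping by a cleaner observation you missed: for $R$ large enough the support of the cutoff lies entirely outside $D_\delta$ (by Theorem~\ref{th:intro_D}, $D_\delta\subset B_{r_+(\delta)}(P_\delta)$ with $r_+(\delta)\sim c\delta^{1/N}$), so $m_{D_\delta}\equiv -\beta$ there and the eigenvalue term $\od(\delta)\int m_{D_\delta}\eta^2 u_\delta^2 = -\beta\od(\delta)\int\eta^2 u_\delta^2$ is \emph{nonpositive} and can be discarded. This removes the troublesome $\od(\delta)\sim\delta^{-2/N}$ factor at the source rather than absorbing it afterward.
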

\begin{proof}
The first estimate is immediate. Indeed, in view of Theorem \ref{thm:expdecayudelta}
\[
\int_{\{x\in \Omega :\, |x-P_{\delta}|>R\delta^{1/N}\}}|u_\delta|^2\leq  |\Omega|C_1 \delta^{-1} e^{-2C_2R}
.
\]

We aim now to provide a decay information on the $L^2$ norm of the gradient of $u_\delta$. 
It is clear that for all $R>3$,  ${\{x\in \Omega :\, |x-P_{\delta}|>R\delta^{1/N}\}}\subset D_{\delta}^{c}$, so that the equation satisfied by $u_\delta$ becomes 
\[
-\Delta u_\delta=-\beta \od(\delta)u_\delta.
\]
We take a smooth cutoff function so that 
\[
\begin{cases}
\eta=1,\qquad &\text{in }\Omega\setminus B_{2R\delta^{1/N}},\\
|\nabla \eta|\leq 1,\qquad &\text{in }\Omega \cap (B_{2R\delta^{1/N}}\setminus B_{R\delta^{1/N}}),\\
\eta=0,\qquad &\text{in } B_{R\delta^{1/N}},
\end{cases}
\]
which is possible up to take $R_0$ sufficiently big (here all the balls are centered at $P_{\delta}$). Then, testing the equation with $\eta^2u_\delta\in H^{1}(\Omega)$, we obtain 
\[
\int_{\Omega\setminus B_{R\delta^{1/N}}} \nabla u_\delta\cdot \nabla (u_\delta \eta^2)=-\beta\od(\delta)\int_{\Omega\setminus B_{R\delta^{1/N}}}\eta^2u_\delta^2\leq 0,
\]
thus\[
\int_{\Omega\setminus B_{R\delta^{1/N}}}|\nabla (u_\delta\eta)|^2-\int_{\Omega\setminus B_{R\delta^{1/N}}}|\nabla \eta|^2u_\delta^2=\int_{\Omega\setminus B_{R\delta^{1/N}}}|\nabla u_\delta|^2\eta^2+2\int_{\Omega\setminus B_{R\delta^{1/N}}}u_\delta\eta\nabla \eta\cdot \nabla u_\delta\leq 0.
\]
As a consequence, using~\eqref{eq:expdecayudelta}, we have \[
\begin{split}
\int_{\Omega\setminus B_{2R\delta^{1/N}}}|\nabla u_\delta|^2&\leq \int_{\Omega\setminus B_{R\delta^{1/N}}}|\nabla (u_\delta\eta)|^2
%\\&
\leq \int_{ B_{2R\delta^{1/N}}\setminus B_{R\delta^{1/N}}}|\nabla \eta|^2u_\delta^2\leq C_{1}  \delta^{-1} e^{-2C_2 R},
\end{split}
\]
so that also the second part of the claim is proved.
\end{proof}

\section{Sharp bound from below with the curvature}\label{sec:boundbelow}

This section is devoted to the proof of Theorem \ref{thm:sviluppoesatto}.
We first recall that the constant $\Gamma$ in the expansion has been calculated in \cite{MPV3}, 
in terms of the eigenfunction $w$ of the 
limit problem \eqref{eq:defIM}, see \eqref{eq:lim_prob}. With the present notation, it reads
\begin{equation}\label{eq:Gamma}
\Gamma = \frac{2(N-1)\gamma}{\int_{\R^{N}_+}|\nabla w|^2\, dz},
\qquad\text{where }
\gamma = \frac{1}{N+1}\int_{\R^{N}_+}|\nabla w|^2z_N \,dz
\end{equation}
(here, for $z\in\R^N_+$, we write $z=(z',z_N)$, with $z'\in \R^{N-1}$ and $z_N>0$).

We are going to prove the following result.
\begin{theorem}\label{thm:boundfrombelow}
Let $P_\delta\in \partial \Omega$ be the unique maximum point of the function $u_\delta$ attaining 
$\od(\delta)$, and let us define
\begin{equation}\label{eq:alpha}
\alpha_\delta=(N-1)H_{P_\delta},
\end{equation}
where $H_{P_\delta}$ denotes the mean curvature of $\partial \Omega$ at $P_\delta$.

We have, as $\delta\to0$, 
\[
\od(\delta)\geq 
\IM\,\delta^{-2/N}\left(1-\frac{2\gamma\;\alpha_\delta}{\int_{\R^{N}_+}|\nabla w|^2}\;\delta^{1/N}+o(\delta^{1/N})\right).
\]
\end{theorem}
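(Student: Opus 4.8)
The plan is to obtain the sharp lower bound by estimating from below the Rayleigh quotient that defines $\od(\delta)$, tested against the optimal eigenfunction $u_\delta$ itself, after transporting the problem to the half-space via a boundary-straightening blow-up centered at $P_\delta$. More precisely, I would set $w_\delta(y) := \delta^{1/2} u_\delta(P_\delta + \delta^{1/N}\Phi_\delta(y))$, where $\Phi_\delta$ is a local $C^{2,1}$ diffeomorphism flattening $\partial\Omega$ near $P_\delta$ (so that the image of $\Omega$ converges, as $\delta\to0$, to $\R^N_+$). By Theorem~\ref{th:intro_D}, $w_\delta$ converges (locally in $C^1$ and $W^{2,p}$) to the limit profile $w$ solving \eqref{eq:lim_prob}, and by Corollary~\ref{cor:decayint} the convergence is in fact global in $H^1$ and $L^2$ up to an exponentially small tail, which is the crucial quantitative ingredient: all integrals may be truncated to a ball of radius $R\delta^{1/N}$ around $P_\delta$ with error $O(\delta^{-1}e^{-2C_2R})$, hence negligible after rescaling once $R$ is chosen large (but fixed).

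The core of the argument is then a careful second-order Taylor expansion, in powers of $\delta^{1/N}$, of the change-of-variables Jacobian and of the metric induced by $\Phi_\delta$. Writing $\partial\Omega$ locally as a graph $x_N = \tfrac12 \sum \kappa_i x_i'^2 + o(|x'|^2)$ in principal coordinates at $P_\delta$, the Jacobian of the straightening map expands as $1 - (N-1)H_{P_\delta}\,\delta^{1/N} y_N + o(\delta^{1/N})$ on the relevant scale — this is exactly where $\alpha_\delta = (N-1)H_{P_\delta}$ enters. Feeding this into $\int_\Omega|\nabla u_\delta|^2$ and into the normalization $\int_{D_\delta}u_\delta^2 - \beta\int_{\Omega\setminus D_\delta}u_\delta^2 = 1$, and using $\od(\delta) = \delta^{-2/N}\big(\text{Rayleigh quotient of }w_\delta\big)$, one gets
\[
\od(\delta)\,\delta^{2/N} \;\ge\; \IM - \alpha_\delta\,\delta^{1/N}\Big(\text{weighted integral of }|\nabla w|^2\text{ against }y_N\Big) + o(\delta^{1/N}),
\]
where the weighted integral, after normalizing, is precisely $\dfrac{2\gamma}{\int_{\R^N_+}|\nabla w|^2}$ with $\gamma$ as in \eqref{eq:Gamma}. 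One subtlety: since $u_\delta$ is the \emph{minimizer} for $\od(\delta)$ but need not be admissible for the half-space problem at the same volume, the inequality goes the right way — plugging $w_\delta$ (suitably renormalized to have the correct volume/mass constraint) as a competitor for $\IM$ would give an upper bound on $\IM$, equivalently a lower bound on $\od(\delta)\delta^{2/N}$; one must track the renormalization constants, which only affect higher-order terms.

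\textbf{Main obstacle.} The delicate point is controlling the error terms in the expansion \emph{uniformly} despite the lack of any linearization: the weight $m_{D_\delta}$ is discontinuous, so one cannot perturb around $w$ by a linearized operator and must instead argue directly at the level of the energy, keeping the bad set $D_\delta$ under control via the inclusion $B_{r_-(\delta)}(P_\delta)\cap\Omega\subset D_\delta\subset B_{r_+(\delta)}(P_\delta)\cap\Omega$ from Theorem~\ref{th:intro_D}(3). Concretely, the terms of order $\delta^{1/N}$ coming from the boundary curvature must be separated cleanly from (i) the $o(\delta^{1/N})$ remainder in the graph representation of $\partial\Omega$, (ii) the $C^1$-but-not-better a priori convergence $w_\delta\to w$, which forces one to integrate by parts or use the decay estimates rather than pointwise gradient bounds, and (iii) the mismatch between the volume of $D_\delta$ and the volume $2$ normalization of the limit ball. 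I expect the bulk of the work to be the bookkeeping in (i)–(iii), organized so that every discarded term is genuinely $o(\delta^{1/N})$, with the exponential decay of Corollary~\ref{cor:decayint} doing the job of making the truncation error subordinate to $\delta^{1/N}$ after choosing $R = R(\delta) \to \infty$ slowly (e.g. $R \sim |\log\delta|$).
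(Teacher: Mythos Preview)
Your proposal is correct and follows essentially the same approach as the paper: blow up at $P_\delta$ via a boundary-straightening diffeomorphism, expand the Jacobian and metric to first order in $\delta^{1/N}$, control tails with the exponential decay of Corollary~\ref{cor:decayint}, and obtain the inequality by using the (suitably extended and reflected) $w_\delta$ as a competitor for the limit problem, which yields $\IM(|\widetilde D_\delta|/2)^{-2/N}$ as a lower bound for the rescaled Rayleigh quotient. The paper organizes your items (i)--(iii) into three separate lemmas---one for the numerator $\int|\nabla u_\delta|^2$, one for the weighted $L^2$ denominator, and one for $|\widetilde D_\delta|$---and then combines the three first-order corrections via the algebraic identity~\eqref{eq:uguaglianzamagica} to produce the coefficient $2\gamma$.
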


As a matter of fact, Theorem \ref{thm:sviluppoesatto} follows at once from the result above.
\begin{proof}[Proof of Theorem \ref{thm:sviluppoesatto}]
Recalling Theorem \ref{th:intro_D}, point \ref{punto4}, and since   
\begin{equation}\label{eq:Gamma1}
\Gamma H_{P_\delta} = \frac{2\gamma\;\alpha_\delta}{\int_{\R^{N}_+}|\nabla w|^2},
\end{equation}
we infer that Theorem  \ref{thm:boundfrombelow} implies
\[
H_{P_\delta} + o_\delta(1) \ge H_P,\qquad \text{ for every }P\in\partial\Omega, 
\]
as $\delta \to0$, and both claims in Theorem \ref{thm:sviluppoesatto} follow.
\end{proof}

The remaining part of this section is devoted the proof of Theorem~\ref{thm:boundfrombelow}, which is 
rather long and needs many intermediate steps. Such proof is based on an improved analysis of the 
blow-up procedure used in \cite[Section 4]{MPV3}, which in turn was inspired by \cite{nitakagi_cpam}. 
We summarize the key points of such procedure in the following. 

Recall that the domain $\Omega\subset \R^N$ is at least $C^{2,1}$, and let 
$P\in \partial \Omega$ to be chosen below. We call $x=(x_1,\dots,x_N)$ a set of coordinates centered at $P$, translated so that $P$ is the origin and rotated so that the outer unit normal to the boundary 
of $\Omega$ at $P$ is $-e_N$. 
Using the notation\[
x'=(x_1,\dots,x_{N-1}),
\]
there exist $d_0>0$, a $C^{2,1}$ function 
\begin{equation}\label{eq:psidelta}
\psi\colon \Big\{x'\in \R^{N-1}: |x'|<d_0\Big\}\to \R,
\end{equation}
and a neighborhood of the origin $\mathcal N$ such that 
\begin{enumerate}
\item[i)]$\psi(0)=0$, $\nabla \psi(0)=0$, $\Delta \psi(0)=(N-1)H_0 =: \alpha$,
\item[ii)] $\displaystyle
\partial \Omega\cap \mathcal N=\Big\{(x',x_N) : x_N=\psi(x')\Big\},\qquad \Omega\cap \mathcal N=\Big\{(x',x_N) : x_N>\psi(x')\Big\}.
$
\end{enumerate}
For a certain $d_1>0$, we define a diffeomorphism \[
\Phi\colon \Big\{y\in \R^N : |y|\leq d_1\Big\}\to \R^N,\qquad x=\Phi(y)=(\Phi_1(y),\dots, \Phi_N(y)),
\]
as 
\[
\Phi_j(y)=
\begin{cases}
y_j-y_N\frac{\partial \psi}{\partial y_j}(y'),&\qquad \text{for }j=1,\dots, N-1,
\smallskip\\
y_N+\psi(y'),&\qquad \text{for }j=N.
\end{cases}
\]
\begin{remark}\label{rmk:unoinmeno}
It is worth noticing that, in case $\partial\Omega$ is of class $C^{k,\alpha}$, then $\Phi$ is only 
$C^{k-1,\alpha}$. In particular, under our assumptions, $\Phi$ is always at least $C^{1,1}$.
\end{remark}
We note that $D\Phi(0)=\Id$, due to the properties of $\psi$, and therefore $\Phi$ is locally invertible in, say, 
$B_{3\NTr}$ for some $\NTr>0$.  Then we can assume 
\begin{equation}\label{eq:defPsi}
\Phi(B_{2\NTr}^+)\subset\Omega,\quad\text{and }\quad \Psi\from \Phi(B_{3\NTr}^+)\to B_{3\NTr}^+,\ \Psi(x):=\Phi^{-1}(x).
\end{equation}

The map $\Psi$ can be seen as a local diffeomorphism straightening the boundary around $0\in \partial \Omega$. For future reference, we remark that
\begin{equation}\label{eq:lemmaA1}
\begin{split}
\det D\Phi (y)  &= 1 -\alpha y_N + O(|y|^2),\smallskip\\
\left| \frac{y}{|y|}D\Psi (\Phi(y))\right|^2  &= 1 +2 y_N \sum_{i,j=1}^{N-1}  
\psi_{ij}(0)\frac{y_iy_j}{|y|^2} + O(|y|^2),
\end{split}
\qquad\text{as  }y\to0,
\end{equation}
where $\psi_{ij}=\frac{\partial \psi}{\partial y_{j}\partial y_{i}}$
and we refer to \cite[Lemma A.1]{nitakagi_cpam} for further details.

In our case, we choose $P=P_{\delta}$, the maximum point of the optimal eigenfunction $u_\delta$. 
As a consequence, the rotation and translation to set 
$P_{\delta}$ at the origin become $\delta-$dependent, and the Taylor expansions in \eqref{eq:lemmaA1} 
hold with $\alpha=\alpha_\delta$,  $\psi_{ij}=\psi^{\delta}_{ij}$. 
Let us also observe that all the decay estimates of the previous section will be applied in this one taking $P_{\delta}=0$.

The transformed eigenfunction is defined by 
\begin{equation}\label{eq:vk}
v_\delta(y):=u_{\delta}(\Phi_\delta(y)),\qquad y\in  \overline B^{+}_{2\kappa}.
\end{equation}
It is then easy to extend by symmetry in the whole $B_{2\kappa}$ the function $v_k$ also where $y_N<0$:
\begin{equation}\label{eq:vtildek}
\widetilde v_\delta(y):=
\begin{cases}
v_\delta(y),\qquad &\text{if }y_N\geq 0,\\
v_\delta(y',-y_N),\qquad &\text{if }y_N<0.
\end{cases}
\end{equation}
At this point, we can introduce the blow-up sequence, for $\delta_k>0$,
\begin{equation}\label{eq:wdelta}
%\Omega_\delta=\frac{B_{2\kappa}}{\delta^{1/N}},\qquad 
w_\delta(z)=\delta^{1/2}\,\widetilde v_\delta\left(\delta^{1/N}z\right),\qquad z\in \overline{B_{\kappa\delta^{-1/N}}}.
\end{equation}
We obtain that $w_\delta$ satisfies the following equation in divergence form:
\begin{equation}\label{eq:eq_in_forma_di_div}
\begin{cases}
-\diverg\left( A^{\delta} \nabla  w_{\delta} \right) = \delta^{2/N} \od(\delta)J_{\delta}\cdot \widetilde m_\delta   w_{\delta}   & \text{in }  B_{\kappa\delta^{-1/N}} , \\
\partial_N  w_{\delta} = 0 = A^{\delta} \nabla  w_{\delta} \cdot e_N & \text{on }
 \{z_N = 0 \} \cap B_{\kappa\delta^{-1/N}} ,
\end{cases}
\end{equation}
where the rescaled weight is, for $z\in B_{\kappa \delta^{-1/N}}$,
\[
\widetilde m_\delta(z)=
\begin{cases}
m_{\delta}\Big(\Phi_\delta(\delta^{1/N}z)\Big),\qquad &\text{if }z_N\geq 0,\\
m_{\delta}\Big(\Phi_\delta(\delta^{1/N}z',-\delta^{1/N}z_N\Big),\qquad &\text{if }z_N< 0.
\end{cases}
\]
and the scalar $J_{\delta}$ and the matrix $A^\delta$ are defined as
\[
J_\delta(z) = |\det D\Phi_\delta (\delta^{1/N}z)|, \qquad A^\delta(z) = J_\delta(z)
[D\Psi_\delta(\Phi_\delta(\delta^{1/N}z))]^T D\Psi_\delta(\Phi_\delta(\delta^{1/N}z))
\]
for $z_N\ge 0$, and extended in the natural way for $z_N<0$. 
\begin{remark}\label{rmk:non_div_form}
Notice that the coefficients matrix $A^\delta$ is Lipschitz continuous. Moreover, 
in case $\partial\Omega\in C^{3,\theta}$, we have that $A^\delta$ is $C^{1,\theta}$ 
with respect to $z'$, as the even reflection involves only $z_N$ (recall Remark \ref{rmk:unoinmeno}).
In any case, it is standard to see that 
\eqref{eq:eq_in_forma_di_div} can be written also in non-divergence form, with a drift 
term with $L^\infty$ coefficients, see 
e.g.~\cite[Theorem~8.8]{gilbargtrudinger}. In particular, since 
$\tfrac{\partial w_\delta}{\partial z_N}=0$ on $\{z_N=0\}$, we have that 
$w_\delta$ is a $W^{2,p}(B_{\kappa\delta^{-1/N}})$ solution of such non-divergence 
form equation. We refer to~\cite[Section~4]{MPV3} for further details.
\end{remark}

Under the above construction, by the results in~\cite{MPV3}, Section~4 (in particular, Remark 4.4 
therein), we obtain the following convergence properties for the blow-up sequences. 
\begin{proposition}\label{prop:vecchiolavoro}
Under the above notation, and denoting with $w$, $m$ the optimizers of the limit problem, as in 
\eqref{eq:lim_prob}, we have
\begin{itemize}
\item 
$ w_\delta \to w$ strongly in $H^1_{\loc}(\R^N)$ and in $C^{1,\alpha}_{\loc}$;
\item $\widetilde m_{\delta} \stackrel{*}{\rightharpoonup} m$ weakly$*$ in $L^\infty_{\loc}$.
\end{itemize}
as $\delta\to0$.
\end{proposition}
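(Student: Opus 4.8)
The statement is in essence the outcome of the blow-up analysis of \cite[Section~4]{MPV3}, and the plan is to recover it from the uniform estimates recorded above in three moves: establish compactness for the family $w_\delta$, identify the limits of the coefficients and of the rescaled weight $\widetilde m_\delta$, and pass to the limit in \eqref{eq:eq_in_forma_di_div}, recognising the limit as the optimizer $w$. For the compactness, the starting point is the uniform bound $\|w_\delta\|_{L^\infty(B_{\kappa\delta^{-1/N}})}=\delta^{1/2}\|u_\delta\|_{L^\infty(\Omega)}\le C^*$, immediate from \eqref{eq:wdelta} and Theorem~\ref{thm:2.3}(ii) (recall that $P_\delta$ is the maximum point of $u_\delta$). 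Since $D\Phi_\delta(0)=\Id$ and the $\Phi_\delta$ carry uniform $C^{1,1}$ bounds (being built from the uniformly $C^{2,1}$ local graphs $\psi_\delta$, cf.\ Remark~\ref{rmk:unoinmeno}), on every fixed ball one has $A^\delta\to\Id$ and $J_\delta\to1$ uniformly, with $A^\delta$ uniformly elliptic and uniformly Lipschitz; and by Theorem~\ref{th:intro_D}, point~\ref{punto4}, the prefactor satisfies $\delta^{2/N}\od(\delta)\to\IM$, so the right-hand side of \eqref{eq:eq_in_forma_di_div} is bounded in $L^\infty_{\loc}$. I would then pass to the non-divergence reformulation of \eqref{eq:eq_in_forma_di_div} (Remark~\ref{rmk:non_div_form}) — legitimate precisely because the even reflection across $\{z_N=0\}$ makes $w_\delta$ a genuine $W^{2,p}$ solution on the whole ball $B_{\kappa\delta^{-1/N}}$ — and use interior $W^{2,p}$ estimates to get $\|w_\delta\|_{W^{2,p}(B_R)}\le C(R,p)$ for all $R,p$, hence $\|w_\delta\|_{C^{1,\alpha}(B_R)}\le C(R,\alpha)$ for all $\alpha<1$; an Arzelà--Ascoli/diagonal argument then yields, along a subsequence, $w_\delta\to w_*$ in $C^{1,\alpha'}_{\loc}$ for every $\alpha'<1$ and weakly in $W^{2,p}_{\loc}$, and in particular $w_\delta\to w_*$ strongly in $H^1_{\loc}(\R^N)$.

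For the weight, I would write $\widetilde m_\delta=\ind{\widetilde D_\delta}-\beta\,\ind{\R^N\setminus\widetilde D_\delta}$, with $\widetilde D_\delta$ the even reflection across $\{z_N=0\}$ of $\delta^{-1/N}\Phi_\delta^{-1}(D_\delta)$, and use the sandwiching $B_{r_-(\delta)}(P_\delta)\cap\Omega\subset D_\delta\subset B_{r_+(\delta)}(P_\delta)\cap\Omega$ from Theorem~\ref{th:intro_D}(3), where $\delta^{-1/N}r_\pm(\delta)\to r_2$ since $\omega_N r_\pm(\delta)^N=2\delta(1\pm o(1))$ and $\omega_N r_2^N=2$. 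Because $\partial\Omega$ near $P_\delta$ is the graph $x_N=\psi_\delta(x')$ with $\psi_\delta(0)=0$, $\nabla\psi_\delta(0)=0$ and uniformly bounded second derivatives, on $B_{C\delta^{1/N}}(P_\delta)$ it lies within $O(\delta^{2/N})=o(\delta^{1/N})$ of $\{x_N=0\}$, and $\Phi_\delta$ lies within $o(\delta^{1/N})$ of the identity there; hence the rescaled, straightened copies of $B_{r_\pm(\delta)}(P_\delta)\cap\Omega$ are squeezed between the half-balls $B^+_{r_2(1-o(1))}$ and $B^+_{r_2(1+o(1))}$, so $\widetilde D_\delta\to B_{r_2}$ in $L^1_{\loc}(\R^N)$. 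Since $|\widetilde m_\delta|\le\max\{1,\beta\}$, this upgrades to $\widetilde m_\delta\to m$ in $L^p_{\loc}$ for every $p<\infty$, in particular weakly$*$ in $L^\infty_{\loc}$.

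Finally, I would pass to the limit in the weak formulation $\int_{\R^N}A^\delta\nabla w_\delta\cdot\nabla\varphi=\delta^{2/N}\od(\delta)\int_{\R^N}J_\delta\,\widetilde m_\delta\,w_\delta\varphi$, $\varphi\in C^\infty_c(\R^N)$: by $A^\delta\to\Id$ uniformly on $\supp\varphi$, $\nabla w_\delta\to\nabla w_*$ in $L^2(\supp\varphi)$, $w_\delta\to w_*$ uniformly, $J_\delta\to1$, $\delta^{2/N}\od(\delta)\to\IM$ and $\widetilde m_\delta\stackrel{*}{\rightharpoonup}m$, this gives $-\Delta w_*=\IM\,m\,w_*$ in $\R^N$. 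The limit is nonnegative, and $w_*\not\equiv0$ because $w_\delta(0)=\delta^{1/2}u_\delta(P_\delta)=\delta^{1/2}\|u_\delta\|_{L^\infty(\Omega)}\ge c_0>0$ (the matching lower bound $\|u_\delta\|_{L^\infty(\Omega)}\ge c\,\delta^{-1/2}$ being part of \cite{MPV3}); moreover, transferring Theorem~\ref{thm:expdecayudelta} through the blow-up, $w_*(z)\le Ce^{-C'|z|}$, so $w_*\in H^1(\R^N)$. Thus $w_*$ is a positive $H^1$ solution of $-\Delta w_*=\IM\,m\,w_*$ with $m$ the weight of the ball $B_{r_2}$ centered at the origin; by \eqref{eq:lambdazero}--\eqref{eq:lim_prob} and uniqueness of the principal eigenfunction for such $m$, $w_*$ is a positive multiple of $w$, and the multiple is $1$ by the $L^2(\Omega)$-normalization of $u_\delta$, which transfers — up to exponentially small errors, by Corollary~\ref{cor:decayint} — to the normalization of $w$ fixed in \cite{MPV3}. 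Since the limit is independent of the subsequence, the whole family converges.

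I expect the main obstacle to be the boundary compactness. The even reflection across $\{z_N=0\}$ costs one derivative on the straightening map (Remark~\ref{rmk:unoinmeno}), so the divergence-form coefficients $A^\delta$ are merely Lipschitz and one cannot avoid the non-divergence detour of Remark~\ref{rmk:non_div_form} in order to reach $W^{2,p}$, hence $C^{1,\alpha}$, bounds; the identification step, in turn, leans on the non-trivial inputs from \cite{MPV3} that $\|u_\delta\|_{L^\infty(\Omega)}$ is comparable to $\delta^{-1/2}$ and that the blow-up inherits the correct normalization, together with uniqueness for the limit problem.
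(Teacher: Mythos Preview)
The paper does not supply an independent proof of this proposition: it is stated as a direct consequence of the blow-up analysis carried out in \cite[Section~4]{MPV3} (see in particular Remark~4.4 there), and no further argument is given. Your sketch is therefore not competing with a proof in the present paper, but is a reconstruction of the cited one; as such it is essentially correct and follows the expected three-step pattern (uniform $L^\infty$ and $W^{2,p}_{\loc}$ bounds via the non-divergence reformulation of Remark~\ref{rmk:non_div_form}; convergence of $\widetilde D_\delta$ to $B_{r_2}$ from Theorem~\ref{th:intro_D}(3), whence $\widetilde m_\delta\to m$; passage to the limit in the weak form and identification through positivity, decay, and simplicity of the principal eigenvalue of \eqref{eq:lim_prob}).

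Two small remarks. First, you invoke Theorem~\ref{thm:expdecayudelta} and Corollary~\ref{cor:decayint} to put the limit $w_*$ into $H^1(\R^N)$ and to transfer the normalization. This is logically fine in the current paper (those results precede the proposition and do not rely on it), but the original argument in \cite{MPV3} predates them and uses instead the weaker decay of \cite[Proposition~4.11]{MPV3} (your Theorem~\ref{thm:2.3}) for the same purpose. Second, your normalization step is a bit glib: the $L^2(\Omega)$-normalization of $u_\delta$ transfers, after straightening, rescaling and reflection, to $\int_{\R^N}w_*^2=2$ (up to $o(1)$), and one should check that this is the normalization of $w$ fixed in \cite{MPV3}; alternatively, one may simply declare $w$ to be the limit and then its normalization is inherited, which is effectively what the reference does. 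Neither point affects the validity of your outline.
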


The exponential decay of $u_\delta$, obtained in Corollary~\ref{cor:decayint}, entails an 
(uniform in $\delta$) exponential decay for $w_\delta$ and its gradient.
\begin{lemma}\label{lem:expdecayw}
There exist universal positive constants $C_3,C_4>0$ and there exist $R_0,\delta_0>0$ such that, for all $R>R_0$ and $\delta\leq \delta_0$ we have that
\begin{equation}\label{eq:expdecaywdelta}
\begin{split}
w_\delta(z)&\leq C_3 e^{-C_4|z|},\qquad  \text{for all }z\in B_{\kappa\delta^{-1/N}}\;,\\
\int_{B_{\kappa\delta^{-1/N}} \setminus B_R}|w_\delta|^2&\leq  C_3 e^{-2C_4R},
\end{split}
\end{equation}
Moreover, for $R_2>R_1>R_0$,
\begin{equation}\label{eq:decaygradw}
\begin{split}
|\nabla w_\delta(z)|&\leq C_3 e^{-C_4 |z| },\qquad  \text{for all }z\in B_{\kappa\delta^{-1/N}}\;,\\
%|\nabla w_\delta(z)|&\leq C_3 e^{-C_4|z|},\qquad  \text{for all }z\in B_{\kappa\delta^{-1/N}}\;.\\
\int_{B_{\kappa\delta^{-1/N}} \setminus B_R}|\nabla w_\delta|^2 &\leq  C_3 e^{-2C_4R},
\end{split}
\end{equation}
where the balls are centered at the origin, maximum point for $w_\delta$.
\end{lemma}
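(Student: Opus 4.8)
The plan is to transfer the exponential decay of $u_\delta$ established in Theorem~\ref{thm:expdecayudelta} and Corollary~\ref{cor:decayint} through the change of variables $\Phi_\delta$ and the blow-up rescaling \eqref{eq:wdelta}, and then to upgrade the pointwise decay of $w_\delta$ to a pointwise decay of $\nabla w_\delta$ via interior and boundary elliptic estimates. First I would observe that $\Phi_\delta$ is a diffeomorphism on $\overline{B_{3\kappa}^+}$ with $D\Phi_\delta(0)=\Id$, so there is a universal bi-Lipschitz constant: $c|y|\le |\Phi_\delta(y)|\le C|y|$ for $y\in B_{2\kappa}^+$, uniformly in $\delta$ (recall $P_\delta=0$). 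Combining this with the definition $w_\delta(z)=\delta^{1/2}\widetilde v_\delta(\delta^{1/N}z)=\delta^{1/2}u_\delta(\Phi_\delta(\delta^{1/N}z))$ for $z_N\ge0$ (and the even reflection for $z_N<0$), the bound \eqref{eq:expdecayudelta} gives
\[
w_\delta(z)\le C_1\delta^{1/2}\delta^{-1/2}e^{-C_2|\Phi_\delta(\delta^{1/N}z)|\delta^{-1/N}}\le C_1 e^{-C_2 c|z|},
\]
which is the first bound in \eqref{eq:expdecaywdelta} with $C_3=C_1$, $C_4=C_2 c$; the reflection case is identical since $|z|$ is unchanged. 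The $L^2$ bound in \eqref{eq:expdecaywdelta} then follows either by integrating this pointwise estimate over $B_{\kappa\delta^{-1/N}}\setminus B_R$, or more directly by changing variables in the first inequality of \eqref{eq:decayint}: $\int_{B_{\kappa\delta^{-1/N}}\setminus B_R}|w_\delta|^2\,dz=\delta^{-1}\delta^{-1}\int\cdots$; one checks the Jacobian factors $\delta^{1}$ (from $dz=\delta^{-1}dx$ after $x=\delta^{1/N}z$ in $N$ dimensions, i.e. $\delta^{-1}$, times $\det D\Phi_\delta$ bounded) and the normalization $\delta^{1}$ from the $\delta^{1/2}$ prefactors cancel appropriately, and that $B_R$ in the $z$-variable corresponds to a region containing $\{|x-P_\delta|>R'\delta^{1/N}\}$ for $R'$ comparable to $R$, so \eqref{eq:decayint} applies.

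For the gradient bounds \eqref{eq:decaygradw} I would use the equation. By Remark~\ref{rmk:non_div_form}, $w_\delta$ solves a non-divergence-form uniformly elliptic equation with $L^\infty$ coefficients on $B_{\kappa\delta^{-1/N}}$, with zero Neumann condition on $\{z_N=0\}$, and with right-hand side $\delta^{2/N}\od(\delta)J_\delta\widetilde m_\delta w_\delta$. Since $\delta^{2/N}\od(\delta)\to\IM$ is bounded, $J_\delta$ and $\widetilde m_\delta$ are uniformly bounded, and we have just shown $w_\delta(z)\le C_3 e^{-C_4|z|}$, the right-hand side is bounded by $Ce^{-C_4|z|}$. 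Fix $z_0$ with $|z_0|=r\ge R_0$ and apply interior $W^{2,p}$ estimates on the ball $B_1(z_0)$ (or, if $z_0$ is within distance $1$ of $\{z_N=0\}$, the corresponding boundary estimate using the homogeneous Neumann condition, which is legitimate because $A^\delta\cdot e_N$ and the geometry are controlled — this is where the even reflection across $z_N=0$ is used to reduce to an interior estimate in a full ball): one gets $\|w_\delta\|_{W^{2,p}(B_{1/2}(z_0))}\le C(\|w_\delta\|_{L^p(B_1(z_0))}+\|\text{RHS}\|_{L^p(B_1(z_0))})\le Ce^{-C_4 r/2}$, and by Morrey's embedding (choosing $p>N$) $|\nabla w_\delta(z_0)|\le Ce^{-C_4 r/2}$. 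Relabeling the constant ($C_4\to C_4/2$, or keeping the notation sloppy as the paper does) yields the first line of \eqref{eq:decaygradw}, and integrating it over $B_{\kappa\delta^{-1/N}}\setminus B_R$ — or, cleaner, quoting the second line of \eqref{eq:decayint} with the same change of variables as above — gives the $L^2$ gradient bound. The statement mentions $R_2>R_1>R_0$, which is presumably a leftover from an annular formulation; the estimates as written on $B_{\kappa\delta^{-1/N}}\setminus B_R$ are what the proof delivers.

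The main obstacle, and the only genuinely delicate point, is the elliptic estimate near the flattened boundary $\{z_N=0\}$: one must make sure the constants in the $W^{2,p}$ boundary estimates are uniform in $\delta$. This is not automatic because the domain $B_{\kappa\delta^{-1/N}}$ and the coefficients $A^\delta$ depend on $\delta$; however, the coefficients are uniformly Lipschitz (Remark~\ref{rmk:non_div_form}) with ellipticity constants approaching $1$, and the estimates are applied on balls of fixed radius $1$ well inside the domain, so the Calderón–Zygmund constants depend only on $N$, $p$, the ellipticity, and the Lipschitz bound — all uniform. The zero co-normal condition $A^\delta\nabla w_\delta\cdot e_N=0$ combined with $\partial_N w_\delta=0$ (so that the even reflection of $w_\delta$ remains a $W^{2,p}$ solution of an equation of the same type with reflected, still uniformly good, coefficients) lets us absorb the boundary case into the interior one. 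Everything else is bookkeeping with the bi-Lipschitz bounds for $\Phi_\delta$ and the scaling factors.
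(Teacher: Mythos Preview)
Your proposal is correct and follows essentially the same approach as the paper: transfer the pointwise decay of $u_\delta$ through the bi-Lipschitz diffeomorphism $\Phi_\delta$ and the blow-up scaling to get \eqref{eq:expdecaywdelta}, then apply interior $W^{2,p}$ estimates (via Remark~\ref{rmk:non_div_form}, with the even reflection absorbing the boundary case) together with the Morrey--Sobolev embedding on unit-scale balls centered at $z$ to obtain the pointwise gradient decay. Your discussion of the uniformity in $\delta$ of the elliptic constants and of the boundary/reflection issue is more explicit than the paper's, but the argument is the same.
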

\begin{proof}
The pointwise exponential decay of $w_\delta$ (and of its $L^2$ norm) follows directly from 
Theorem \ref{thm:expdecayudelta}  and the fact that the diffeomorphism is close to the identity and centered at $P_{\delta}$ (which is translated at $0$), 
so that 
\[
0<w_\delta(z)=\delta^{1/2}u_\delta(\Phi_{\delta}(\delta^{1/N}z))\leq 
C_1e^{-C_2\delta^{-1/N}|\Phi_{\delta}(\delta^{1/N}z)|}
\leq C_3 e^{-C_4|z|}.
\] 
Then the decay of its $L^2$ norm outside $B_R$ is immediate.

Concerning the exponential decay of the gradient, we use elliptic regularity, see 
e.g.~\cite[Theorem~9.11]{gilbargtrudinger} (recall Remark \ref{rmk:non_div_form}) together with the 
Morrey-Sobolev embedding of $W^{2,p}\subset C^{1,\alpha}$ for $p$ sufficiently large.
More precisely, let $z$ be such that $B_{4}(z) \subset B_{\kappa\delta^{-1/N}}$. Then, 
using \eqref{eq:expdecaywdelta}, we have that, 
\[
\|w_\delta\|_{C^{1,\alpha}(B_{3}(z)\setminus B_{2}(z))}\leq C\|w_\delta\|_{W^{2,p}(B_{3}(z)\setminus B_{2}(z))}\leq C \|w_\delta\|_{L^p(B_{4}(z)\setminus B_{1}(z))}\leq C  e^{-C_4|z|}
\]
for a universal constant $C>0$, and
where we have taken into account that the balls are centered at a generic point $z$. 
\end{proof}
%%%%Proof of the bound from below}

%
\begin{remark}\label{rmk:conv_forte}
In view of Lemma \ref{lem:expdecayw}, one can use a concentration-compactness kind of argument 
to obtain strong $H^1(\R^N)$ convergence of $w_\delta$ to $w$. We refer to 
\cite[Lemma 2.3]{ferreri_verzini2}, \cite[Section 4]{ferreri_verzini}, where  the argument 
was fully detailed in the case of Dirichlet boundary conditions.  
\end{remark}

Going back to the blow-up procedure, we now introduce the optimal sets (nonrescaled and rescaled) 
\begin{equation}\label{eq:Dtilde}
\begin{split}
D_{\delta}=&\{x\in \Omega : m_{\delta}(x)=1\},\qquad |D_{\delta}|=\delta,
\\
\widetilde D_\delta:=&\left\{z\in B_{\kappa\delta^{-1/N}} : z\in \frac{\Psi_\delta(D_{\delta})}{\delta^{1/N}}\text{ or }(z',-z_N)\in \frac{\Psi_\delta(D_{\delta})}{\delta^{1/N}}\right\},
\end{split}
\end{equation}
and we recall that 
\begin{equation}\label{eq:sopralivello}
\widetilde D_\delta=\{z\in B_{\kappa\delta^{-1/N}} : \widetilde m_\delta(z)=1\}
=\left\{z\in B_{\kappa\delta^{-1/N}} : w_{\delta}(z)>t_{\delta}\right\}.
\end{equation}

The core of the proof of Theorem \ref{thm:boundfrombelow} consists in bounding the optimal 
level $\IM$  of the limit problem \eqref{eq:lim_prob} in terms of a weighted Rayleigh quotient of $w_\delta$. An issue in this direction 
is that 
\[
|\widetilde D_\delta|=2 + o_\delta(1)\qquad\text{as }\delta\to0,
\]
but the error term cannot be discarded. This can be easily overcome using the reflection and 
scaling properties of the limit problem, namely
\[
\min\Big\{\lambda(A,\R^N):A\subset \R^N,\text{ measurable, }|A|=\ell\Big\} = \IM
\cdot \left(\frac{\ell}{2}\right)^{-2/N}, \qquad\text{for all }\ell>0. 
\]
\begin{lemma}
We have, as $\delta\to0$,
\begin{equation}\label{eq:IMwdelta}
\IM\cdot \left(\frac{|\widetilde D_\delta|}{2}\right)^{-2/N} \le 
 \lambda(\widetilde{D}_{\delta},\R^{N}) \leq \frac{\int_{B_{\kappa\delta^{-1/N}}}|\nabla w_\delta(z)|^2\,dz}{\int_{B_{\kappa\delta^{-1/N}}}\widetilde m_\delta w_\delta^2(z)\,dz}+o(\delta^{1/N}).
\end{equation}
\end{lemma}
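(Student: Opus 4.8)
The plan is to prove the two inequalities in \eqref{eq:IMwdelta} separately, the left one being essentially a definition and the right one being the substantial part.

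\textbf{The left inequality.} This is immediate from the scaling/reflection identity for the limit problem recalled just before the statement: since $\widetilde D_\delta\subset\R^N$ is measurable with $|\widetilde D_\delta| = 2 + o_\delta(1) > 0$, we have by definition of the minimum
\[
\min\Big\{\lambda(A,\R^N):A\subset\R^N\text{ measurable},\ |A|=|\widetilde D_\delta|\Big\} = \IM\cdot\left(\frac{|\widetilde D_\delta|}{2}\right)^{-2/N},
\]
and $\lambda(\widetilde D_\delta,\R^N)$ is one of the competitors on the left, hence is at least the minimum. (Strictly speaking one should note $\widetilde D_\delta$ has finite measure so the variational problem $\lambda(\widetilde D_\delta,\R^N)$ makes sense; this follows from \eqref{eq:sopralivello} together with the exponential decay in Lemma~\ref{lem:expdecayw}, which forces $w_\delta > t_\delta$ only on a bounded-measure region.)

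\textbf{The right inequality.} The idea is to use $w_\delta$ (extended by zero outside $B_{\kappa\delta^{-1/N}}$) as a test function in the Rayleigh quotient defining $\lambda(\widetilde D_\delta,\R^N)$, and to pay only for the discrepancy between the Euclidean Dirichlet form and the form appearing on the right-hand side of \eqref{eq:IMwdelta}, which is built from the push-forward of $\int_\Omega|\nabla u_\delta|^2$ under the straightening diffeomorphism. Concretely, $w_\delta$ is admissible for $\lambda(\widetilde D_\delta,\R^N)$ provided $\int_{\widetilde D_\delta} w_\delta^2 - \beta\int_{\R^N\setminus\widetilde D_\delta} w_\delta^2 > 0$, which holds for $\delta$ small since this quantity converges (using strong $H^1(\R^N)$ convergence $w_\delta\to w$ from Remark~\ref{rmk:conv_forte} and the $L^\infty$/$L^2$ exponential tail bounds of Lemma~\ref{lem:expdecayw}, plus $\widetilde m_\delta\stackrel{*}{\rightharpoonup}m$) to $\int_{\R^N}m\,w^2 > 0$, the latter being positive because $w$ achieves $\IM>0$. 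Thus after rescaling $w_\delta$ by a positive constant to meet the normalization,
\[
\lambda(\widetilde D_\delta,\R^N) \le \frac{\int_{\R^N}|\nabla w_\delta|^2}{\int_{\widetilde D_\delta}w_\delta^2 - \beta\int_{\R^N\setminus\widetilde D_\delta}w_\delta^2} = \frac{\int_{B_{\kappa\delta^{-1/N}}}|\nabla w_\delta|^2}{\int_{B_{\kappa\delta^{-1/N}}}\widetilde m_\delta w_\delta^2}.
\]
It then remains to compare this plain quotient with the one on the right of \eqref{eq:IMwdelta}. But these two are in fact the \emph{same} quotient: the right-hand side of \eqref{eq:IMwdelta} is not the $A^\delta$-weighted quotient but exactly $\int|\nabla w_\delta|^2 / \int\widetilde m_\delta w_\delta^2$, so the inequality holds with error term $0$ — unless the intended reading is that the right-hand side is meant to be compared to the energy of $u_\delta$ on $\Omega$. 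In that case the step is: change variables $x=\Phi_\delta(\delta^{1/N}z)$ to write $\int_{B_{\kappa\delta^{-1/N}}}A^\delta\nabla w_\delta\cdot\nabla w_\delta = \int_{\Phi_\delta(B^+_{\kappa})}|\nabla u_\delta|^2$ (times $2$ for the reflection), estimate $|A^\delta - \Id| = O(\delta^{1/N}|z|)$ from \eqref{eq:lemmaA1}, and absorb the difference $\int(A^\delta-\Id)\nabla w_\delta\cdot\nabla w_\delta$ using the exponential decay of $|\nabla w_\delta|$ in Lemma~\ref{lem:expdecayw}: $\int_{B_{\kappa\delta^{-1/N}}}\delta^{1/N}|z|\,|\nabla w_\delta|^2 \le \delta^{1/N}\int_{\R^N}|z|\,C_3^2 e^{-2C_4|z|}\,dz = O(\delta^{1/N})$, and similarly $|J_\delta - 1| = O(\delta^{1/N}|z|)$ is absorbed into the denominator. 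This produces precisely the additive $o(\delta^{1/N})$ (in fact $O(\delta^{1/N})$) error.

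\textbf{Main obstacle.} The delicate point is handling the normalization of $w_\delta$ as a competitor and ensuring the denominator $\int\widetilde m_\delta w_\delta^2$ stays bounded away from $0$ uniformly in $\delta$ — this is where strong $H^1(\R^N)$ convergence (Remark~\ref{rmk:conv_forte}), rather than merely $H^1_{\loc}$ (Proposition~\ref{prop:vecchiolavoro}), is essential, together with the uniform exponential tails to control the mass of $w_\delta^2$ on $\R^N\setminus\widetilde D_\delta$, which has infinite measure. Once that positivity is secured, the diffeomorphism corrections are routine given the quantitative decay of Lemma~\ref{lem:expdecayw} and the Taylor expansions \eqref{eq:lemmaA1}.
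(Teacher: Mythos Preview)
Your treatment of the left inequality is correct and matches the paper.

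For the right inequality, there is a genuine gap. You propose to use $w_\delta$ \emph{extended by zero outside} $B_{\kappa\delta^{-1/N}}$ as a competitor for $\lambda(\widetilde D_\delta,\R^N)$. But $w_\delta$ does not vanish on $\partial B_{\kappa\delta^{-1/N}}$ --- by Lemma~\ref{lem:expdecayw} it is merely exponentially small there, not zero --- so the extension by zero has a jump across this sphere and is \emph{not} in $H^1(\R^N)$. Hence it is not an admissible test function, and the chain
\[
\lambda(\widetilde D_\delta,\R^N) \le \frac{\int_{\R^N}|\nabla w_\delta|^2}{\int_{\R^N}\widetilde m_\delta w_\delta^2} = \frac{\int_{B_{\kappa\delta^{-1/N}}}|\nabla w_\delta|^2}{\int_{B_{\kappa\delta^{-1/N}}}\widetilde m_\delta w_\delta^2}
\]
is unjustified as written. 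The $o(\delta^{1/N})$ in the statement is there precisely to pay for producing a legitimate $H^1(\R^N)$ competitor: the paper builds $\widetilde w_\delta$ by taking $w_\delta$ on $B_{\kappa\delta^{-1/N}}$, the harmonic function $h$ with boundary data $w_\delta$ and $0$ on the annulus $B_{(\kappa+1)\delta^{-1/N}}\setminus B_{\kappa\delta^{-1/N}}$, and $0$ outside. The exponential decay of $w_\delta$ and $\nabla w_\delta$ on $\partial B_{\kappa\delta^{-1/N}}$ then gives $\int_A|\nabla h|^2 + \int_A h^2 = o(\delta^{1/N})$, which is the displayed error term.

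Your subsequent discussion of $A^\delta$ versus $\Id$ and $J_\delta$ versus $1$ is a misreading of where the error comes from: those diffeomorphism corrections appear in the \emph{later} lemmas relating $\od(\delta)$ to the Rayleigh quotient of $w_\delta$, not in this one. Here the Rayleigh quotient on the right of \eqref{eq:IMwdelta} really is the plain Euclidean one, and the only issue is the extension.
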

\begin{proof}
We need to extend $w_\delta$ to the whole $\R^N$ to make it an admissible competitor for 
the limit problem. We extend $\widetilde m_\delta=-\beta$ in $\R^N\setminus B_{\kappa\delta^{-1/N}}$, as it is natural.
We define the new function\[
\widetilde w_\delta (z)=
\begin{cases}
w_\delta(z),\qquad &\text{ in }B_{\kappa \delta^{-1/N}},\\
h(z),\qquad &\text{ in }B_{(\kappa +1)\delta^{-1/N}}\setminus B_{\kappa \delta^{-1/N}},\\
0,\qquad &\text{ in }\R^N\setminus B_{(\kappa+1) \delta^{-1/N}},\\
\end{cases}
\]
where $h$  
 is the unique harmonic extension:\[
\begin{cases}
-\Delta h=0,\qquad &\text{ in }B_{(\kappa+1) \delta^{-1/N}}\setminus B_{\kappa \delta^{-1/N}},\\
h=w_\delta,\qquad &\text{ on }\partial B_{\kappa \delta^{-1/N}},\\
h=0,\qquad &\text{ on }\partial B_{(\kappa+1) \delta^{-1/N}}.\\
\end{cases}
\]
Thanks to Lemma~\ref{lem:expdecayw}, we have that \[
w_\delta(z)\leq C_3 e^{-C_4\kappa\delta^{-1/N}},\qquad \text{ on }\partial B_{\kappa \delta^{-1/N}},\\
\]
and the same also holds for the normal derivative, in view of \eqref{eq:decaygradw}.
As a consequence, calling for the sake of simplicity $A=B_{(\kappa +1)\delta^{-1/N}}\setminus B_{\kappa \delta^{-1/N}}$, we deduce
\[
\left|\int_A  -\beta \widetilde w_\delta^2(z)\,dz \right|\leq C |A|e^{-2C_4\kappa\delta^{-1/N}}\leq o(\delta^{1/N}).
\]
Moreover, using the Divergence Theorem, we obtain\[
\int_A|\nabla \widetilde w_\delta(z)|^2\,dz=\int_{\partial B_{\kappa\delta^{-1/N}}}\widetilde w_\delta \partial_\nu \widetilde w_\delta\,d\mathcal H^{N-1}\leq o(\delta^{1/N}).
\]
All in all, we have proved~\eqref{eq:IMwdelta}.
\end{proof}

In the next three lemmas, we proceed with the estimates of all the unknown terms in \eqref{eq:IMwdelta}, 
in terms of $\alpha_\delta$ and $u_\delta$. We start from the measure of $\widetilde D_\delta$.

\begin{lemma}\label{le:ImD}
We have, as $\delta\to0$,
\begin{equation}\label{eq:misuraDdeltatilde}
\left(\frac{|\widetilde D_\delta|}{2}\right)^{-2/N}=
1-\frac2N\alpha_\delta\delta^{1/N}\frac{\omega_{N-1}}{N+1}r_2^{N+1}+o(\delta^{1/N}).
\end{equation}
\end{lemma}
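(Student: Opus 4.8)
\textbf{Proof strategy for Lemma \ref{le:ImD}.}

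The plan is to compute $|\widetilde D_\delta|$ by pulling back to the original domain via the diffeomorphism $\Phi_\delta$ and exploiting the decay estimate for $u_\delta$. Recall from \eqref{eq:Dtilde} that $\widetilde D_\delta$ is the even reflection of $\delta^{-1/N}\Psi_\delta(D_\delta)$ across $\{z_N=0\}$, so $|\widetilde D_\delta| = 2\delta^{-1}|\Psi_\delta(D_\delta)|$; using the change of variables $x = \Phi_\delta(y)$ and that $D_\delta \subset \Phi_\delta(B_{2\kappa}^+)$ for $\delta$ small (which follows from point 3 of Theorem~\ref{th:intro_D}, i.e.\ $D_\delta \subset B_{r_+(\delta)}(P_\delta)\cap\Omega$ with $r_+(\delta)\sim (2\delta)^{1/N} r_2 \le \kappa \delta^{1/N}$), one gets
\[
\frac{|\widetilde D_\delta|}{2} = \delta^{-1}\int_{D_\delta}\det D\Psi_\delta(x)\,dx = \delta^{-1}\int_{\Psi_\delta(D_\delta)} \frac{\det D\Psi_\delta(\Phi_\delta(y))}{\cdot}\,dy
\]
— more cleanly, $|\Psi_\delta(D_\delta)| = \int_{D_\delta}|\det D\Psi_\delta|\,dx$, and since $\det D\Psi_\delta = 1/\det D\Phi_\delta \circ \Psi_\delta$, it is easiest to just parametrize $\widetilde D_\delta$ directly: $|\widetilde D_\delta|/2 = \delta^{-1}\int_{\delta^{-1/N}\Psi_\delta(D_\delta)}\det D\Phi_\delta(\delta^{1/N}\zeta)\,d\zeta$ after the rescaling $y = \delta^{1/N}\zeta$. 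By \eqref{eq:lemmaA1}, $\det D\Phi_\delta(y) = 1 - \alpha_\delta y_N + O(|y|^2)$, hence $\det D\Phi_\delta(\delta^{1/N}\zeta) = 1 - \alpha_\delta\delta^{1/N}\zeta_N + O(\delta^{2/N}|\zeta|^2)$ on the relevant range. Since the rescaled set has measure $\delta^{-1}|\Psi_\delta(D_\delta)| = |\widetilde D_\delta|/2$ — wait, this is circular — so instead write
\[
\frac{|\widetilde D_\delta|}{2} = |\delta^{-1/N}\Psi_\delta(D_\delta)| = \delta^{-1}|\Psi_\delta(D_\delta)| = \delta^{-1}\int_{D_\delta}\det D\Psi_\delta\,dx,
\]
and observe that on $D_\delta$ we have $|x - P_\delta| \le \kappa\delta^{1/N}$, so $\det D\Psi_\delta(x) = 1 + \alpha_\delta (x-P_\delta)_N^{loc} + O(\delta^{2/N})$ where the linear coefficient comes from inverting \eqref{eq:lemmaA1} — but the error $O(\delta^{2/N})\cdot|D_\delta| = O(\delta^{1+2/N})$ after multiplying by $\delta^{-1}$ is $O(\delta^{2/N}) = o(\delta^{1/N})$, so
\[
\frac{|\widetilde D_\delta|}{2} = 1 + \alpha_\delta \delta^{-1}\int_{D_\delta}(\text{local }N\text{-coordinate})\,dx + o(\delta^{1/N}).
\]

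The remaining task is to evaluate the first-order term $\delta^{-1}\int_{D_\delta} y_N\,dx$ (in local straightened coordinates, i.e.\ after pulling back, $\delta^{-1}\int_{\Psi_\delta(D_\delta)} y_N\,dy$ up to a lower-order Jacobian correction) asymptotically. Here I would use that $\widetilde D_\delta = \{w_\delta > t_\delta\}$ from \eqref{eq:sopralivello}, together with the strong convergence $w_\delta \to w$ in $C^{1,\alpha}_{\loc}$ and $t_\delta \to t$ (the level for which $\{w > t\} = B_{r_2}$), supplemented by the exponential decay in Lemma~\ref{lem:expdecayw} to control the tails uniformly. Concretely, after the rescaling $y = \delta^{1/N} z$ one has $\delta^{-1}\int_{\Psi_\delta(D_\delta)} y_N\,dy = \delta^{1/N}\int_{\widetilde D_\delta^+} z_N\,dz$ (using the symmetric halves), and $\int_{\widetilde D_\delta^+} z_N\,dz \to \int_{B_{r_2}^+} z_N\,dz = \tfrac12\int_{\partial' B_{r_2}^+}(\dots) $; the actual value is $\int_{B_{r_2}\cap\{z_N>0\}} z_N\,dz = \int_0^{r_2}\rho^{N-1}\,d\rho \int_{\sphere^{N-1}\cap\{z_N>0\}} |z_N|\,d\sigma$. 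Computing, $\int_0^{r_2}\rho^{N-1}\,d\rho = r_2^N/N$ — no: we need $\int_{B_{r_2}} |z_N|\,dz$ over the half, which equals $\int_0^{r_2}\rho^{N-1}\rho\,d\rho \cdot \int_{\sphere^{N-1}_+} |\omega_N|\,d\sigma$. The radial part gives $r_2^{N+1}/(N+1)$, and the angular integral $\int_{\sphere^{N-1}}|\omega_N|\,d\sigma = 2\omega_{N-1}$ (a standard identity, with $\omega_{N-1} = |B_1^{N-1}|$), so the half-sphere contributes $\omega_{N-1}$. Thus $\int_{\widetilde D_\delta^+} z_N\,dz \to \frac{\omega_{N-1}}{N+1}r_2^{N+1}$, and altogether
\[
\frac{|\widetilde D_\delta|}{2} = 1 + \alpha_\delta\delta^{1/N}\cdot\left(-\tfrac{\ }{\ }\right)\cdots
\]
— tracking the sign: $\det D\Psi_\delta$ has $+\alpha_\delta$ times the $\Omega$-ward coordinate, which after straightening and reflecting corresponds to $-z_N$ relative to how $\Phi$ pushes the boundary upward; chasing \eqref{eq:lemmaA1} carefully yields $|\widetilde D_\delta|/2 = 1 + \alpha_\delta\delta^{1/N}\frac{\omega_{N-1}}{N+1}r_2^{N+1} + o(\delta^{1/N})$ with a plus sign in the natural orientation, and then raising to the power $-2/N$ via $(1+s)^{-2/N} = 1 - \tfrac2N s + O(s^2)$ gives \eqref{eq:misuraDdeltatilde}.

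\textbf{Main obstacle.} The delicate point is \emph{not} the leading-order identification — that is a soft consequence of $C^{1,\alpha}_{\loc}$ convergence plus exponential decay — but pinning down the correct sign and the precise constant in the first-order term, which requires careful bookkeeping of: (i) the inversion of the Jacobian expansion \eqref{eq:lemmaA1} to get $\det D\Psi_\delta$ rather than $\det D\Phi_\delta$; (ii) the interplay between the even reflection defining $\widetilde D_\delta$ and the linear term, which is odd in $y_N$, so that the two symmetric halves \emph{add} rather than cancel; and (iii) justifying that the $O(|y|^2)$ remainder in \eqref{eq:lemmaA1}, together with the $o_\delta(1)$ discrepancy between $\widetilde D_\delta^+$ and $B_{r_2}^+$, truly contributes only $o(\delta^{1/N})$ after multiplication by $\delta^{-1}$ — this last point is where the uniform exponential decay of Lemma~\ref{lem:expdecayw} and the uniform convergence $t_\delta\to t$ with non-degenerate $|\nabla w| \ne 0$ on $\partial B_{r_2}$ are essential, since they let one estimate $|\widetilde D_\delta^+ \triangle B_{r_2}^+|$ and hence the error in $\int z_N$ over the symmetric difference.
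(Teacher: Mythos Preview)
Your approach is correct and parallels the paper's, but the paper's execution is cleaner in two respects. First, instead of computing $|\widetilde D_\delta|$ via $\int_{D_\delta}\det D\Psi_\delta$ --- which forces you to invert the Jacobian expansion \eqref{eq:lemmaA1} and then chase the sign through the reflection --- the paper starts from the \emph{exact} constraint $|D_\delta|=\delta$ and changes variables by $x=\Phi_\delta(\delta^{1/N}z)$, so that \eqref{eq:lemmaA1} applies directly and one reads off $|\widetilde D_\delta| = 2 + \alpha_\delta\delta^{1/N}\!\int z_N + O(\delta^{2/N})$ without any inversion; this sidesteps your obstacle (i) entirely. Second, to evaluate $\int_{\widetilde D_\delta^+} z_N\,dz$ the paper does not invoke $w_\delta\to w$, $t_\delta\to t$, or the exponential decay of Lemma~\ref{lem:expdecayw}: it simply uses the sandwich $(1-o_\delta(1))B_{r_2}\subset\widetilde D_\delta\subset(1+o_\delta(1))B_{r_2}$ from Theorem~\ref{th:intro_D}, point~3 --- which you already cited at the outset --- together with the observation that $z_N\ge 0$ on the upper half makes the integral monotone under inclusion, so the limit $\frac{\omega_{N-1}}{N+1}r_2^{N+1}$ falls out by squeezing. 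Your proposed route through the level-set description and eigenfunction convergence would also work, but it is unnecessary machinery here; with the paper's two simplifications, your obstacles (ii)--(iii) dissolve into routine bookkeeping.
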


\begin{proof}
We recall that, thanks to Theorem~\ref{th:intro_D} (or to \cite[Lemma~4.5 and Proposition~4.7]{MPV3}), 
we have, as $\delta\to 0$
\begin{equation}\label{eq:inclusion}
(1-o_\delta(1))B_{r_{2}}\subset \widetilde D_\delta\subset (1+o_\delta(1))B_{r_{2}},
\end{equation}
recalling that $|B_{r_{2}}|=2$.
Then, we can compute
\[
\delta=|D_{\delta}|=\int_{\Omega}\ind{D_{\delta}}(x)\,dx= \int_{\R^{N}_+}\delta\ind{D_{\delta}}\Big(\Phi_{\delta}(\delta^{1/N}z)\Big)\,\det\Big(D\Phi_{\delta}(\delta^{1/N}z)\Big)\,dz,
\]
and using also~\cite[Lemma~A.1]{nitakagi_cpam} (see \eqref{eq:lemmaA1}), we obtain 
\[\begin{split}
2&=\int_{\R^N} \ind{\widetilde D_\delta}(z)\Big(1-\alpha_{\delta}{\delta}^{1/N}z_N+O(\delta^{2/N}|z|^2)\Big)\,dz
\\
&=|\widetilde D_\delta|-\alpha_\delta\delta^{1/N}\int_{\R^N}\ind{\widetilde D_\delta}(z)\Big(z_N+O(\delta^{1/N}|z|^{2})\Big)\,dz.
\end{split}\]
Using~\eqref{eq:inclusion}, it is easy to prove, as $\delta\to0$,
\[
2+\alpha_\delta\delta^{1/N}\int_{(1-o_\delta(1))B_{r_{2}}}z_N\,dz+O(\delta^{2/N})\leq |\widetilde D_\delta|\leq 2+\alpha_\delta\delta^{1/N}\int_{(1+o_\delta(1))B_{r_{2}}}z_N\,dz+O(\delta^{2/N}).
\]
We can now compute, by scaling, that \[
\int_{(1+o_\delta(1))B_{r_{2}}}z_N\,dz=(1+o_\delta(1))^{\frac{N+1}{N}}\int_{B_{r_{2}}}
y_N\,dy=(1+o_\delta(1))\frac{2\omega_{N-1}}{N+1}r_2^{N+1},
\]
where we made the change of variable $z=(1+o_\delta(1))^{1/N}y$ and we used
\[
\int_{(B_{r_{2}})^+}z_N\,dz=\frac{\omega_{N-1}}{N+1}r_2^{N+1},
\]
denoting as usual $r_2$ the radius of the ball of measure $2$.
As a consequence, we have, for $\delta\to0$, 
\begin{equation}\label{eq:misuraDdelta1}
|\widetilde D_\delta|= 2+2\alpha_\delta\delta^{1/N}\frac{\omega_{N-1}}{N+1}r_2^{N+1}+o(\delta^{1/N}).
\end{equation}
Thanks to these estimates, we obtain, as $\delta\to0$, 
\begin{equation*}
\frac{|\widetilde D_\delta|}{2}=
1+\alpha_\delta\delta^{1/N}\frac{\omega_{N-1}}{N+1}r_2^{N+1}+o(\delta^{1/N})
\end{equation*}
yielding the conclusion.
\end{proof}

Next we proceed with the expansion of the numerator of the Rayleigh quotients of $u_{\delta}$ and $w_{\delta}$. 
\begin{lemma}
Recalling that $\alpha_\delta = (N-1) H_{P_\delta}$ and that $\gamma$ is defined in \eqref{eq:Gamma}, 
we have, as $\delta\to0$,
\begin{equation}\label{eq:contogradboundbelow1}
\int_{\Omega}|\nabla u_\delta(x)|^2dx=\delta^{-2/N}\int_{B^+_{\kappa\delta^{-1/N}}}|\nabla 
w_{\delta} (z)|^2\,dz\left(1-\delta^{1/N}\frac{(N-1)\gamma\;\alpha_\delta}{\int_{\R^{N}_+}|\nabla w|^2}+o(\delta^{1/N})\right)
\end{equation}
\end{lemma}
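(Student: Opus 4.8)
The plan is to change variables via the diffeomorphism $\Phi_\delta$, transferring the integral from $\Omega$ to the half-ball $B_{3\kappa}^+$, then rescale by $\delta^{1/N}$ to pass to $w_\delta$, carefully tracking the Jacobian factors. Concretely, writing $u_\delta = v_\delta\circ\Psi_\delta$ on $\Phi_\delta(B_{3\kappa}^+)$ and using the chain rule, one has $\nabla u_\delta(\Phi_\delta(y)) = [D\Psi_\delta(\Phi_\delta(y))]^T\nabla v_\delta(y)$, so that
\[
\int_{\Phi_\delta(B_{2\kappa}^+)}|\nabla u_\delta(x)|^2\,dx = \int_{B_{2\kappa}^+}\bigl\langle (D\Psi_\delta)^T D\Psi_\delta\,\nabla v_\delta,\nabla v_\delta\bigr\rangle\,|\det D\Phi_\delta(y)|\,dy,
\]
which is exactly the bilinear form associated with the matrix $A^\delta$ appearing in \eqref{eq:eq_in_forma_di_div}. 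Rescaling $y=\delta^{1/N}z$ turns this into $\delta^{-2/N}\int_{B_{2\kappa\delta^{-1/N}}^+}\langle A^\delta(z)\nabla w_\delta,\nabla w_\delta\rangle\,dz$ (the $\delta^{-2/N}$ coming from $N$ powers of $\delta^{1/N}$ from the volume element against two powers of $\delta^{-1/N}$ from the gradient, together with the $\delta^{1/2}$ normalization of $w_\delta$).

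Next I would split $A^\delta = \Id + (A^\delta-\Id)$ and use the Taylor expansions \eqref{eq:lemmaA1}: combining $\det D\Phi_\delta(\delta^{1/N}z) = 1 - \alpha_\delta\delta^{1/N}z_N + O(\delta^{2/N}|z|^2)$ with the expansion of $|{\tfrac{y}{|y|}}D\Psi_\delta(\Phi_\delta(y))|^2$, one gets $A^\delta(z) = \Id - \delta^{1/N}\alpha_\delta z_N\,\Id + 2\delta^{1/N}z_N\,(\psi^\delta_{ij}(0))_{ij} + O(\delta^{2/N}|z|^2)$ as a matrix (the linear term in $z_N$ has a scalar part from the Jacobian determinant and a Hessian part from $D\Psi_\delta^T D\Psi_\delta$). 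Hence
\[
\int_{B_{2\kappa\delta^{-1/N}}^+}\langle A^\delta\nabla w_\delta,\nabla w_\delta\rangle\,dz = \int_{B_{2\kappa\delta^{-1/N}}^+}|\nabla w_\delta|^2\,dz + \delta^{1/N}\int_{B_{2\kappa\delta^{-1/N}}^+}z_N\,Q_\delta[\nabla w_\delta]\,dz + (\text{error}),
\]
where $Q_\delta[\xi] = -\alpha_\delta|\xi|^2 + 2\sum_{i,j}\psi^\delta_{ij}(0)\xi_i\xi_j$. The first-order term is handled by passing to the limit: $w_\delta\to w$ in $H^1_{\loc}$ with uniform exponential decay of $w_\delta$ and $\nabla w_\delta$ (Lemma~\ref{lem:expdecayw}), so the integral converges to $\int_{\R^N_+}z_N\,Q[\nabla w]\,dz$ where $Q$ uses the limiting Hessian. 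Since $w$ is radial, $\int_{\R^N_+}z_N\,\partial_i w\,\partial_j w = 0$ for $i\ne j$ and all the diagonal terms $\int z_N(\partial_i w)^2$ ($i<N$) are equal; using $\sum_{i=1}^{N-1}\psi^\delta_{ii}(0) = \Delta\psi(0) = \alpha_\delta$ and an averaging/rotational-symmetry computation, the Hessian contribution collapses to a scalar multiple of $\alpha_\delta\int_{\R^N_+}z_N|\nabla w|^2\,dz = (N+1)\gamma\,\alpha_\delta$ up to the right combinatorial constant, leaving exactly $-(N-1)\gamma\,\alpha_\delta$ after collecting signs. The outside contribution $\int_\Omega\setminus\Phi_\delta(B_{2\kappa}^+)|\nabla u_\delta|^2$ is absorbed into $o(\delta^{1/N})$ directly by Corollary~\ref{cor:decayint}, since that region lies outside a ball of radius $\sim\kappa\delta^{1/N}$ around $P_\delta$.

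The main obstacle is controlling the error terms uniformly in $\delta$: the remainder $O(\delta^{2/N}|z|^2)$ in \eqref{eq:lemmaA1} is only pointwise and the domain of integration $B_{2\kappa\delta^{-1/N}}^+$ blows up, so one must play the algebraic decay $|z|^2$ against the exponential decay $|\nabla w_\delta(z)|^2\le C_3e^{-2C_4|z|}$ from \eqref{eq:decaygradw} to see that $\delta^{2/N}\int|z|^2|\nabla w_\delta|^2\,dz = O(\delta^{2/N}) = o(\delta^{1/N})$, and likewise justify exchanging limit and integral in the first-order term (dominated convergence with the exponential bound as majorant). A secondary technical point is that the expansions \eqref{eq:lemmaA1} are for a fixed $C^{2,1}$ boundary chart, but here the chart depends on $\delta$ through $P_\delta$; one needs that the constants ($d_0$, $d_1$, the $C^{2,1}$ bounds on $\psi^\delta$, hence the constants in the $O(|y|^2)$ terms) are uniform in $P_\delta\in\partial\Omega$, which follows from compactness of $\partial\Omega$ and its $C^{2,1}$ regularity. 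Once these uniformities are in place, dividing through and factoring gives precisely \eqref{eq:contogradboundbelow1}.
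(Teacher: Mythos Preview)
Your proposal is correct and follows essentially the same route as the paper: localize near $P_\delta$ using the exponential decay (Corollary~\ref{cor:decayint}), change variables through $\Phi_\delta$, rescale to $w_\delta$, expand the metric/Jacobian to first order using \eqref{eq:lemmaA1}, replace $w_\delta$ by $w$ in the first-order correction via $H^1_{\loc}$ convergence plus the uniform exponential bounds of Lemma~\ref{lem:expdecayw}, and finally evaluate using the radiality of $w$.

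The only notable difference is cosmetic. You expand the full coefficient matrix $A^\delta(z)=\Id-\delta^{1/N}\alpha_\delta z_N\,\Id+2\delta^{1/N}z_N[\psi^\delta_{ij}(0)]_{i,j<N}+O(\delta^{2/N}|z|^2)$ and pair it with $\nabla w_\delta$, obtaining the quadratic form $Q_\delta[\nabla w_\delta]$; the paper instead invokes the Ni--Takagi expansion of $\bigl|\tfrac{y}{|y|}D\Psi_\delta\bigr|^2$, i.e.\ pairs the metric with the \emph{radial} direction $z/|z|$ rather than with $\nabla w_\delta$. Your version is the literal change-of-variables identity; the paper's version coincides with it only once $\nabla w_\delta$ is replaced by $\nabla w$ (which is radial), which is exactly what happens in the next step anyway. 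Either way the first-order contribution collapses, by the symmetry computation you sketch, to $-(N-1)\gamma\,\alpha_\delta$, matching the paper's $\sum_{i,j}\psi_{ij}\int_{\R^N_+}|\nabla w|^2\tfrac{z_iz_j}{|z|^2}z_N=\gamma\alpha_\delta$.

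One small bookkeeping point: you integrate over $B_{2\kappa\delta^{-1/N}}^+$ while the statement has $B_{\kappa\delta^{-1/N}}^+$; the discrepancy is harmless since $\int_{B_{2\kappa\delta^{-1/N}}^+\setminus B_{\kappa\delta^{-1/N}}^+}|\nabla w_\delta|^2$ is exponentially small in $\delta^{-1/N}$ by \eqref{eq:decaygradw}, hence $o(\delta^{1/N})$.
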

\begin{proof}
Let $r>0$ such that $ B_r^+ \subset\Phi(B_\kappa^+)$.
From Corollary~\ref{cor:decayint}, we infer
\[
\left|\int_{\Omega}|\nabla u_\delta|^2-\int_{\Phi(B_\kappa^+)}|\nabla u_\delta|^2\right|\leq\int_{\Omega\setminus B_r^+}|\nabla u_\delta|^2\leq C_1 \delta^{-1 }e^{-C_2 r\delta^{-1/N}}\leq o(\delta^{1/N}).
\]
On the other hand, exploiting the usual change of variables $y=\Psi(x)$ and $z=y\delta^{-1/N}$, recalling \eqref{eq:wdelta} and applying~\cite[Lemma~A.1]{nitakagi_cpam}. 

\begin{equation}\label{eq:udeltawdelta}
\begin{split}
&\int_{\Phi(B_\kappa^+)}|\nabla u_\delta(x)|^2dx=\int_{B_\kappa^+}|\nabla v_\delta(y)|^2\,\left|\frac{y}{|y|}D\Psi(x)\right|^2\,\det D\Phi(y)\,dy\\
&=\delta^{-2/N}\int_{B^+_{\kappa\delta^{-1/N}}}|\nabla w_\delta(z)|^2\left[1+\delta^{1/N}z_N\Big(2\sum_{i,j=1}^{N-1}\psi_{ij}\frac{z_iz_j}{|z|^2}-\alpha_\delta\Big)+O(\delta^{2/N}|z|^2)\right]\,dz.
\end{split}
\end{equation}
At this point, one can check that there is a constant $C_0$, independent of $\delta$, such that \[
\left(1+\delta^{1/N}z_N\Big[2\sum_{i,j=1}^{N-1}\psi_{ij}\frac{z_iz_j}{|z|^2}-\alpha_\delta\Big]+O(\delta^{2/N}|z|^2)\right)\leq C_0,\qquad \text{for all }z\in B_{\kappa\delta^{-1/N}}^+.
\]
We now want to show that 
\begin{equation}\label{eq:opiccolo}
\int_{B^+_{\kappa\delta^{-1/N}}}\Big(|\nabla w_\delta|^2-|\nabla w|^2\Big)\left[\delta^{1/N}z_N\Big(2\sum_{i,j=1}^{N-1}\psi_{ij}\frac{z_iz_j}{|z|^2}-\alpha_\delta\Big)+O(\delta^{2/N}|z|^2)\right]\,dz=o(\delta^{1/N}),
\end{equation}
as $\delta\to 0.$
To prove this, we use the $H^1_{\loc}$ convergence of $w_\delta$ to $w$ (Proposition \ref{prop:vecchiolavoro}) and their exponential decay (Lemma~\ref{lem:expdecayw},  equation \eqref{eq:decay_w}).
Precisely, let us fix $\eps>0$ and find $R=R(\eps)>0$ such that, for all $\delta>0$ sufficiently small\[
\begin{split}
\int_{B^+_{\kappa\delta^{-1/N}}\setminus B_R^+} |\nabla w_\delta(z)|^2 
\left[z_N\Big(2\sum_{i,j=1}^{N-1}\psi_{ij}\frac{z_iz_j}{|z|^2}-\alpha_\delta\Big)+O(\delta^{1/N}|z|^2)\right]\,dz
&
\leq
C \int_{B^+_{\kappa\delta^{-1/N}}\setminus B_R^+} e^{-C_{4}|z|}|z| dz
\\
\leq C\left(
\delta^{-1}e^{-k\delta^{-1/N}}-R^{N}e^{-C_{4}R}\right)
&\leq 
 \eps,
\\
\int_{B^+_{\kappa\delta^{-1/N}}\setminus B_R^+} |\nabla w(z)|^2\left[z_N\Big(2\sum_{i,j=1}^{N-1}\psi_{ij}\frac{z_iz_j}{|z|^2}-\alpha_\delta\Big)+O(\delta^{1/N}|z|^2)\right]\,dz &\leq \eps.
\end{split}
\]
On the other hand,  $w_\delta$ converges strongly in $H^1(B_R)$ to $w$, hence 
\[
\int_{B^+_R}\Big(|\nabla w_\delta|^2-|\nabla w|^2\Big)\left[z_N\Big(2\sum_{i,j=1}^{N-1}\psi_{ij}\frac{z_iz_j}{|z|^2}-\alpha_\delta\Big)+O(\delta^{1/N}|z|^2)\right]\,dz\leq C(R)o_\delta(1)\leq \eps,
\]
up to take $\delta(\eps)$ small enough. All in all,   \eqref{eq:opiccolo} follows.

We also recall that, thanks to the exponential decay of $w$, it is clear that 
\begin{equation}\label{eq:gradwpiccolafuori}
\int_{(\R^{N}_+)\setminus B^+_{\kappa\delta^{-1/N}}}|\nabla w|^2\,dz=o(\delta^{1/N}),\qquad \text{as }\delta\to 0.
\end{equation}

We can now manage the higher order terms in~\eqref{eq:udeltawdelta}, using~\eqref{eq:opiccolo} and~\eqref{eq:gradwpiccolafuori}
\[
\begin{split}
&\int_{B^+_{\kappa\delta^{-1/N}}}|\nabla w_\delta(z)|^2\left[\delta^{1/N}z_N\Big(2\sum_{i,j=1}^{N-1}\psi_{ij}\frac{z_iz_j}{|z|^2}-\alpha_\delta\Big)+O(\delta^{2/N}|z|^2)\right]\,dz\\
&=\int_{B^+_{\kappa\delta^{-1/N}}}\Big(|\nabla w_\delta(z)|^2-|\nabla w(z)|^2+|\nabla w(z)|^2\Big)\left[\delta^{1/N}z_N\Big(2\sum_{i,j=1}^{N-1}\psi_{ij}\frac{z_iz_j}{|z|^2}-\alpha_\delta\Big)+O(\delta^{2/N}|z|^2)\right]\,dz\\
&=\int_{B^+_{\kappa\delta^{-1/N}}}|\nabla w(z)|^2\left[\delta^{1/N}z_N\Big(2\sum_{i,j=1}^{N-1}\psi_{ij}\frac{z_iz_j}{|z|^2}-\alpha_\delta\Big)+O(\delta^{2/N}|z|^2)\right]\,dz+o(\delta^{1/N})\\
&=\int_{\R^{N}_+}|\nabla w(z)|^2\left[\delta^{1/N}z_N\Big(2\sum_{i,j=1}^{N-1}\psi_{ij}\frac{z_iz_j}{|z|^2}-\alpha_\delta\Big)+O(\delta^{2/N}|z|^2)\right]\,dz+o(\delta^{1/N})\\
&=-\delta^{1/N}[(N-1)\alpha_\delta\gamma]+o(\delta^{1/N}).
\end{split}
\] 
We note that the last equality follows with computations similar to the ones of~\cite[Step 1 of the proof of Proposition~3.5]{MPV3}, in particular using that\[
\sum_{i,j=1}^{N-1}\psi_{ij}\int_{\R^{N}_+}|\nabla w(z)|^2\frac{z_iz_j}{|z|^2}z_N\,dz=\gamma\;\alpha_\delta.
\]
Then, coming back to \eqref{eq:udeltawdelta}, we have  
\begin{equation}\label{eq:contogradboundbelow}
\begin{split}
\int_{\Omega}|\nabla u_\delta(x)|^2&=
\delta^{-2/N}\int_{B^+_{\kappa\delta^{-1/N}}}|\nabla w_\delta(z)|^2\,dz
\left(1-\delta^{1/N}
\frac{(N-1)\gamma\;\alpha_\delta}{\int_{B^+_{\kappa\delta^{-1/N}}}|\nabla w_\delta(z)|^2\,dz}+o(\delta^{1/N})\right).
\end{split}
\end{equation}
Moreover, using again the convergence and the exponential decay of $w_\delta$ and $w$, it is clear that \[
\int_{\R^{N}_+}|\nabla w|^2\,dz=\int_{B_{\kappa\delta^{-1/N}}^+}|\nabla w_\delta |^2\,dz+o_\delta(1),\qquad\text{as }\delta\to 0,
\]
hence,  ~\eqref{eq:contogradboundbelow} yields the conclusion.
\end{proof}
We use a similar strategy for the denominator of the Rayleigh quotient, with the key tools being again Proposition~\ref{prop:vecchiolavoro} and Lemma~\ref{lem:expdecayw}. 

\begin{lemma}
It results
\begin{equation}\label{eq:contodenom}
\begin{split}
\int_{\Omega}m_\delta(x) u_\delta(x)^2\,dx &=\int_
{B_{\kappa\delta^{-1/N}}}\widetilde m_\delta w_\delta^2\,dz\Big(1-\delta^{1/N}\frac{\gamma_1\;\alpha_\delta}{\int_{\R^{N}_+}mw^2}+o(\delta^{1/N})\Big),
\\
\text{where }   \gamma_1 &=\int_{\R^{N}_+}m(z)w(z)^2z_N\,dz.
\end{split}
\end{equation}
\end{lemma}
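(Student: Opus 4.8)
The strategy mirrors the one just used for the numerator: transport the integral over $\Omega$ to an integral over $B_{\kappa\delta^{-1/N}}^+$ via the change of variables $x=\Phi_\delta(y)$, $z=\delta^{-1/N}y$, then use the Taylor expansion of the Jacobian from \eqref{eq:lemmaA1} together with the strong $H^1_{\loc}$ convergence $w_\delta\to w$ and the uniform exponential decay from Lemma~\ref{lem:expdecayw} to replace $w_\delta$ by $w$ in all first-order terms. First I would use Corollary~\ref{cor:decayint} (applied to $u_\delta$, hence also controlling $\int m_\delta u_\delta^2$ since $|m_\delta|\le\beta$) to discard the contribution of $\Omega\setminus\Phi(B_\kappa^+)$ at order $o(\delta^{1/N})$, reducing matters to $\int_{\Phi(B_\kappa^+)}m_\delta u_\delta^2\,dx$.

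Next I would perform the change of variables. Since $m_\delta(\Phi_\delta(\delta^{1/N}z))=\widetilde m_\delta(z)$ on the upper half and $m_\delta\circ\Phi_\delta$, $u_\delta\circ\Phi_\delta$ both reflect evenly in $z_N$ (by construction of $\widetilde v_\delta$ and $\widetilde m_\delta$), the integral over $B_\kappa^+$ doubles to an integral over $B_\kappa$, and recalling $w_\delta(z)=\delta^{1/2}\widetilde v_\delta(\delta^{1/N}z)$ one gets
\[
\int_{\Phi(B_\kappa^+)}m_\delta u_\delta^2\,dx=\int_{B^+_{\kappa\delta^{-1/N}}}\widetilde m_\delta(z)w_\delta(z)^2\bigl[1-\alpha_\delta\delta^{1/N}z_N+O(\delta^{2/N}|z|^2)\bigr]\,dz,
\]
using $\det D\Phi_\delta(\delta^{1/N}z)=1-\alpha_\delta\delta^{1/N}z_N+O(\delta^{2/N}|z|^2)$ from \eqref{eq:lemmaA1}. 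The zeroth-order term is exactly $\int_{B^+_{\kappa\delta^{-1/N}}}\widetilde m_\delta w_\delta^2$. For the $O(\delta^{1/N})$ correction $-\alpha_\delta\delta^{1/N}\int \widetilde m_\delta w_\delta^2 z_N$, I would split $w_\delta^2=(w_\delta^2-w^2)+w^2$ and argue exactly as in the proof of \eqref{eq:opiccolo}: on $B_R^+$ use strong $H^1(B_R)$ (hence $L^2$, and by elliptic regularity $C^0$) convergence, outside $B_R^+$ use the exponential decay of both $w_\delta$ and $w$ — note $\widetilde m_\delta$ is uniformly bounded and equals $-\beta$ outside a fixed ball, and $z_N\le|z|$ is dominated by the exponential. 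This gives
\[
\int_{B^+_{\kappa\delta^{-1/N}}}\widetilde m_\delta w_\delta^2 z_N\,dz=\int_{\R^N_+}m\,w^2 z_N\,dz+o_\delta(1)=\gamma_1+o_\delta(1),
\]
and the $O(\delta^{2/N}|z|^2)$ term is $o(\delta^{1/N})$ by the same decay estimate, so the correction is $-\delta^{1/N}\gamma_1\alpha_\delta+o(\delta^{1/N})$.

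Finally I would factor: writing $\int_{B^+_{\kappa\delta^{-1/N}}}\widetilde m_\delta w_\delta^2=\int_{\R^N_+}mw^2+o_\delta(1)$ (again by convergence plus decay), the ratio $\gamma_1\alpha_\delta/\int_{B^+_{\kappa\delta^{-1/N}}}\widetilde m_\delta w_\delta^2$ equals $\gamma_1\alpha_\delta/\int_{\R^N_+}mw^2+o_\delta(1)$, which after multiplying by $\delta^{1/N}$ contributes only to the $o(\delta^{1/N})$ remainder; this yields exactly \eqref{eq:contodenom}. The only mild subtlety — and the step I would treat most carefully — is that the denominator $\int_{\R^N_+}mw^2$ is genuinely positive (it equals $1/(2\IM)$ times the $L^2$-type normalization, or can be computed directly from \eqref{eq:lim_prob} by testing with $w$), so that dividing by it is legitimate and the factored form is meaningful; everything else is a routine repetition of the numerator argument with $|\nabla w_\delta|^2$ replaced by $\widetilde m_\delta w_\delta^2$.
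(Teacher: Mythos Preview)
The proposal is correct and follows essentially the same route as the paper: discard the tail of $\int_\Omega m_\delta u_\delta^2$ via the exponential decay in Corollary~\ref{cor:decayint}, change variables using the Jacobian expansion \eqref{eq:lemmaA1}, and then replace $\widetilde m_\delta w_\delta^2$ by $m w^2$ in the first-order correction by combining $H^1_{\loc}$ convergence on $B_R$ with the uniform exponential decay outside, before factoring using $\int_{B^+_{\kappa\delta^{-1/N}}}\widetilde m_\delta w_\delta^2=\int_{\R^N_+}mw^2+o_\delta(1)$. The only cosmetic difference is that the paper splits the replacement into two explicit pieces \eqref{eq:erroremwsteps}, invoking the weak$*$ convergence of $\widetilde m_\delta$ from Proposition~\ref{prop:vecchiolavoro} for the second, whereas you bundle this into the observation that $\widetilde m_\delta=-\beta=m$ outside a fixed ball (so only convergence on a compact set is needed); both arguments are valid.
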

\begin{proof}
First, as $\Phi(B_\kappa^+)\supset B_r^+$ for some $r>0$,   
the exponential decay of $u_\delta$ (see Corollary~\ref{cor:decayint}) and the fact that $-\beta\leq m_\delta\leq 1$, imply
\[
\left|\int_{\Omega}m_\delta u_\delta^2-\int_{\Phi(B_\kappa^+)}m_\delta u_\delta^2\right|\leq \max{\{\beta,1\}}\int_{\Omega\setminus B_R^+} u_\delta^2\leq C_1 e^{-C_2 R\delta^{-1/N}}\leq o(\delta^{1/N}), \;\text{  as $\delta\to 0$.}
\]
Then we have,
\[
\begin{split}
&\int_{\Omega}m_\delta(x) u^2_\delta(x)\,dx=\int_{B_\kappa^+}\widetilde m_\delta(y\delta^{-1/N})v_\delta^2(y)\det D\Phi(y)\,dy\\
&=\int_{B^+_{\kappa\delta^{-1/N}}}\widetilde m_\delta(z)w^2_\delta(z)\det D\Phi(y)\,dz\\
&=\int_{B^+_{\kappa\delta^{-1/N}}}\widetilde m_\delta(z)w^2_\delta(z)\Big(1-\alpha_\delta z_N\delta^{1/N}+O(\delta^{2/N}|z|^2)\Big)\,dz.
\end{split}
\]
As above, we have that there is a constant $C_0$, independent of $\delta$, such that \[
\left|1-\alpha_\delta\delta^{1/N}z_N+O(\delta^{2/N}|z|^2)\right|\leq C_0,\qquad \text{for all }z\in B_{\kappa\delta^{-1/N}}^+.
\]

With the same strategy as the one used for the gradient, we first prove that, as $\delta\to0$,
\begin{equation}\label{eq:erroremw}
\int_{B^+_{\kappa\delta^{-1/N}}}|\widetilde m_\delta w_\delta^2- m w^2|\Big(\alpha_\delta z_N\delta^{1/N}+O(\delta^{2/N}|z|^2)\Big)\,dz=o(\delta^{1/N}),
\end{equation}
which we split in two easier steps,
\begin{equation}\label{eq:erroremwsteps}
\begin{split}
\int_{B^+_{\kappa\delta^{-1/N}}}|\widetilde m_\delta w_\delta^2-\widetilde m_\delta w^2|\Big(\alpha_\delta z_N\delta^{1/N}+O(\delta^{2/N}|z|^2)\Big)\,dz=o(\delta^{1/N}),
\\
\int_{B^+_{\kappa\delta^{-1/N}}}|\widetilde m_\delta w^2- m  w^2|\Big(\alpha_\delta z_N\delta^{1/N}+O(\delta^{2/N}|z|^2)\Big)\,dz=o(\delta^{1/N}).
\end{split}
\end{equation}

To prove the estimates in~\eqref{eq:erroremwsteps}, we use the $H^1_{\loc}$ convergence of $w_\delta$ to $w$ and their exponential decay. We start from the first.
More precisely, let us fix $\eps>0$ and find $R=R(\eps)>0$ such that, for all $\delta>0$ sufficiently small\[
\begin{split}
\int_{B^+_{\kappa\delta^{-1/N}}\setminus B_R^+} w_\delta(z)^2\left[\alpha_\delta z_N+O(\delta^{1/N}|z|^2)\right]\,dz\leq \eps,\\
\int_{B^+_{\kappa\delta^{-1/N}}\setminus B_R^+}  w(z)^2\left[\alpha_\delta z_N+O(\delta^{1/N}|z|^2)\right]\,dz\leq \eps,
\end{split}
\]
On the other hand, in $B_R$, $w_\delta$ converges strongly in $H^1$ to $w$ (and $-\beta\leq \widetilde m_\delta\leq 1$), hence 
\[
\int_{B^+_R}\widetilde m_\delta ( w_\delta^2- w^2)\left[\alpha_\delta z_N+O(\delta^{1/N}|z|^2)\right]\,dz\leq C(R)o_\delta(1)\leq \eps,
\]
so that the first estimate in \eqref{eq:erroremwsteps} follows.

The second estimate in~\eqref{eq:erroremwsteps} can be proved in a similar way.
We fix again $\eps>0$ and find $R=R(\eps)>0$ such that, for all $\delta>0$ sufficiently small\[
\begin{split}
\int_{B^+_{\kappa\delta^{-1/N}}\setminus B_R^+}  (\widetilde m_\delta- m)w(z)^2\left[\alpha_\delta z_N+O(\delta^{1/N}|z|^2)\right]\,dz\leq \eps,
\end{split}
\]
which is possible thanks to the exponential decay of $w$ and the boundedness of $\widetilde m_\delta$ and $m$.
In $B_R$, $\widetilde m_\delta$ converges weakly $*$ in $L^\infty$ to $m$ (and the other terms in the integral are clearly $L^1$), hence 
\[
\int_{B^+_R}(\widetilde m_\delta- m) w^2\left[\alpha_\delta z_N+O(\delta^{1/N}|z|^2)\right]\,dz\leq C(R)o_\delta(1)\leq \eps,
\]
up to take $\delta(\eps)$ small enough.
In conclusion we have proved~\eqref{eq:erroremw}.

We also recall that, thanks to the exponential decay of $w$ (and the fact that $-\beta\leq m\leq 1$), it is clear that 
\begin{equation}\label{eq:wpiccolafuori}
\int_{\R^{N}_+\setminus B^+_{\kappa\delta^{-1/N}}}m w^2\,dz=o(\delta^{1/N}),\qquad \text{as }\delta\to 0.
\end{equation}

From~\eqref{eq:erroremw}, adding and subtracting the suitable terms, we obtain \[
\begin{split}
&\int_{B^+_{\kappa\delta^{-1/N}}}\widetilde m_\delta(z)w_\delta(z)^2\Big(-\alpha_\delta z_N\delta^{1/N}+O(\delta^{2/N}|z|^2)\Big)\,dz\\
&=\int_{B^+_{\kappa\delta^{-1/N}}}(\widetilde m_\delta w_\delta^2-mw^2)+ m(z)w(z)^2\Big(-\alpha_\delta z_N\delta^{1/N}+O(\delta^{2/N}|z|^2)\Big)\,dz\\
&=\int_{B^+_{\kappa\delta^{-1/N}}} m(z)w(z)^2\Big(-\alpha_\delta z_N\delta^{1/N}+O(\delta^{2/N}|z|^2)\Big)\,dz+o(\delta^{1/N})=-\alpha_\delta\delta^{1/N}\gamma_1+o(\delta^{1/N}).
\end{split}
\]
All in all,  we have 
\[
\int_{B^+_{\kappa\delta^{-1/N}}}\widetilde m_\delta w_\delta^2=\int_{\R^{N}_+}mw^2+o_\delta(1)\,,
\]
showing the conclusion.
\end{proof}

\begin{proof}[Proof of Theorem \ref{thm:boundfrombelow}]
Putting together~\eqref{eq:contogradboundbelow1} and~\eqref{eq:contodenom}, we have 
\begin{equation}\label{eq:boundbelowwdelta}
\od(\delta)= \delta^{-2/N}\frac{\int_{B_{\kappa\delta^{-1/N}}}|\nabla w_\delta|^2}{\int_{B_{\kappa\delta^{-1/N}}}\widetilde m_\delta w_\delta^2}\,\Big(1-\delta^{1/N}\frac{(N-1)\gamma
\;\alpha_\delta}{\int_{\R^{N}_+}|\nabla w|^2}+o(\delta^{1/N})\Big)\Big(1+\delta^{1/N}\frac{
\gamma_1\;\alpha_\delta}{\int_{\R^{N}_+}mw^2}+o(\delta^{1/N})\Big),
\end{equation}
which, in view of \eqref{eq:IMwdelta},   implies
\begin{equation}\label{eq:boundbassoparziale}
\od(\delta)\geq
\delta^{-2/N}\Big(1-\frac{\delta^{1/N}\;\alpha_\delta}{\IM\int_{\R^{N}_+}mw^2}\Big[(N-1)\gamma-\IM\gamma_1\Big]+o(\delta^{1/N})\Big) \cdot \IM \left(\frac{|\widetilde D_\delta|}{2}\right)^{-2/N}
\end{equation}
In turn, \eqref{eq:misuraDdeltatilde} yields
\[
\begin{split}
\od(\delta)\geq \delta^{-2/N}\IM\left(1-\frac{\delta^{1/N}\;\alpha_\delta}{\IM\int_{\R^{N}_+}mw^2}[(N-1)\gamma-\IM\gamma_1]+o(\delta^{1/N})\right)\times
\\
\times\left(1-2/N\delta^{1/N}\;\alpha_\delta\frac{\omega_{N-1}}{N+1}r_2^{N+1}+o(\delta^{1/N})\right).
\end{split}\]
As
\begin{equation}\label{eq:uguaglianzamagica}
(N-1)\gamma-\IM\gamma_1=2\gamma-2\IM r_2^{N+1}\frac{\omega_{N-1}}{N(N+1)}\int_{\R^{N}_+}mw^2,
\end{equation}
the theorem follows.
\end{proof}

\begin{remark}\label{rmk:scorciatoia}
With a closer look at the previous proof, and in particular at the role of \eqref{eq:IMwdelta} in estimate \eqref{eq:boundbassoparziale}, 
we notice that also the following inequality holds true:
\begin{equation*}%\label{eq:scorc}
\begin{split}
\od(\delta)&\geq 
\delta^{-2/N}\left(1-\Gamma\widehat H\;\delta^{1/N}+o(\delta^{1/N})\right)\cdot \left(\frac{|\widetilde D_\delta|}{2}\right)^{2/N}\lambda(\widetilde D_\delta,\R^N),\\
&\geq 
\delta^{-2/N}\left(1-\Gamma\widehat H\;\delta^{1/N}+o(\delta^{1/N})\right)\cdot \IM
\end{split}
\end{equation*}
where we used also \eqref{eq:Gamma1} and Theorem \ref{thm:sviluppoesatto}. As a consequence,
\[
\delta^{2/N}\od(\delta) = \left(\frac{|\widetilde D_\delta|}{2}\right)^{2/N}\lambda(\widetilde D_\delta,\R^N) + o_\delta(1) = \IM + o_\delta(1)
\] 
as $\delta\to0$, and finally
\begin{equation*}%\label{eq:scorc}
\delta^{2/N}\od(\delta)\geq 
\left(\frac{|\widetilde D_\delta|}{2}\right)^{2/N}\lambda(\widetilde D_\delta,\R^N) 
-\Gamma\widehat H\;\IM\;\delta^{1/N}+o(\delta^{1/N}),
\end{equation*}
which establishes a lower bound for $\Lambda(\delta)$ on terms of an eigenvalue of the set $\widetilde D_\delta$ related to the limit problem. 
\end{remark}

\section{Polar parameterization of the optimal sets}\label{sec:polar}

In the last part of the paper we are going to exploit the blow up analysis performed in the previous 
section, and in particular the asymptotic radial symmetry of the rescaled eigenfunctions $w_\delta$ and 
optimal sets $\widetilde{D}_{\delta}$, see \eqref{eq:inclusion}, to obtain a polar parametrization 
of $\partial\widetilde{D}_{\delta}$ and to investigate its finer regularity properties, also from a 
quantitative point of view. Of course, such information can be translated to $D_\delta$ using the 
diffeomorphisms introduced at the beginning of Section \ref{sec:boundbelow}.

To this aim, let us first recall the concept of nearly spherical set (used in the proof of a quantitative isoperimetric inequality first in~\cite{fuglede}). For our aims, although 
$\widetilde{D}_{\delta}$ has measure $2$ only in the limit, it is convenient 
to normalize the reference radius to $r_2$, where as usual $|B_{r_{2}}|=2$. 
\begin{definition}\label{def:nearlysph}
A (bounded) set $A\subset \R^{N}$ is a nearly spherical set of class $C^{k,\alpha}$, 
centered at $Q$, if  there exists 
$\varphi=\varphi_A\in C^{k,\alpha}(\mathbb{S}^{N-1})$ with $\|\varphi\|_{L^{\infty}}\leq r_2/2$ such that 
\[
\partial A=\left\{x\in \R^{N} : x=Q+(r_{2}+\varphi(\theta))\theta, \text{for $\theta \in \mathbb{S}^{N-1}$}\right\}.
\]
In such case, we say that $A$ is parametrized by $\varphi$.
\end{definition}

As a matter of fact, it is not difficult to exploit \eqref{eq:inclusion} and the implicit function 
theorem to see that $\widetilde{D}_{\delta}$ is nearly spherical, centered at the maximum point 
$0 = \Psi_\delta(P_\delta)$, at least when $\delta$ is sufficiently small. On the other hand, in 
order to employ suitable quantitative estimates obtained in \cite{ferreri_verzini2}, 
it is necessary to adjust the center of the parametrization, choosing instead the barycenter 
$\bari(\widetilde{D}_{\delta})$ (and the corresponding preimage on $\partial\Omega$). This is possible 
thanks to the following lemma, where we keep track of $\Psi_\delta(P_\delta)=0$ for the sake of clarity.

\begin{lemma}\label{lem:bari}
Under the notation of Section \ref{sec:boundbelow} we have, as $\delta\to0$:
\begin{enumerate}
\item $\bari(\widetilde{D}_{\delta}) \in \{z:z_N=0\}$ and $\left|\bari(\widetilde{D}_{\delta})
- \Psi_\delta(P_\delta)\right|=o_\delta(1)$;
\item $Q_{\delta}:=\Phi_\delta(\bari(\widetilde{D}_{\delta})) \in \partial D_{\delta}$ and 
$|Q_{\delta}-P_{\delta}|=o(\delta^{1/N})$.
\end{enumerate}
\end{lemma}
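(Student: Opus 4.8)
\textbf{Proof plan for Lemma \ref{lem:bari}.}
The plan is to establish the two items essentially in parallel, reducing everything to the strong convergence $w_\delta\to w$ and the uniform exponential decay from Lemma \ref{lem:expdecayw}, together with the characterization \eqref{eq:sopralivello} of $\widetilde D_\delta$ as a superlevel set of $w_\delta$. First I would note that $\widetilde D_\delta$ is symmetric under the reflection $z_N\mapsto -z_N$ by construction (see \eqref{eq:vtildek}, \eqref{eq:wdelta} and \eqref{eq:Dtilde}), so that the $z_N$-component of $\bari(\widetilde D_\delta)$ vanishes identically; this gives the first half of item (1) with no asymptotics needed. For the second half, I would write $\bari(\widetilde D_\delta) = \frac{1}{|\widetilde D_\delta|}\int_{\widetilde D_\delta} z\,dz$ and compare it with $\bari(B_{r_2}) = 0 = \Psi_\delta(P_\delta)$. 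The inclusions \eqref{eq:inclusion}, $(1-o_\delta(1))B_{r_2}\subset\widetilde D_\delta\subset (1+o_\delta(1))B_{r_2}$, control the symmetric difference $\widetilde D_\delta\triangle B_{r_2}$ in measure, and since this difference sits inside a fixed bounded annulus, $\int_{\widetilde D_\delta\triangle B_{r_2}}|z|\,dz = o_\delta(1)$; combined with $|\widetilde D_\delta|\to 2$ this yields $|\bari(\widetilde D_\delta)| = o_\delta(1)$, proving item (1).

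For item (2), I would first observe that $Q_\delta := \Phi_\delta(\bari(\widetilde D_\delta))$ is well defined for $\delta$ small because, by item (1), $\bari(\widetilde D_\delta)$ lies in $\{z_N=0\}\cap B_{2\kappa}$, which is in the domain of the diffeomorphism $\Phi_\delta$; moreover $\Phi_\delta$ maps $\{z_N=0\}$ into $\partial\Omega$, hence $Q_\delta\in\partial\Omega$. To see that $Q_\delta\in\partial D_\delta$, I would use that $D_\delta$ is the superlevel set $\{u_\delta>t_\delta\}$ and that $\widetilde D_\delta$ is (up to reflection and rescaling) the image $\frac{\Psi_\delta(D_\delta)}{\delta^{1/N}}$; the barycenter of $\widetilde D_\delta$, being an interior point of the ball-like set $\widetilde D_\delta$ in the limit but sitting on the symmetry hyperplane $\{z_N=0\}$, corresponds under $\Phi_\delta$ to a point on $\partial\Omega$ where $u_\delta$ crosses the level $t_\delta$ — more precisely, I would argue that since $\bari(\widetilde D_\delta)\in\{z_N=0\}$ and this hyperplane corresponds to $\partial\Omega$, the point $Q_\delta$ lies on $\partial\Omega$, and then that on $\partial\Omega$ the only candidate for membership in $\overline{D_\delta}$ that is consistent with $\bari(\widetilde D_\delta)$ lying on the boundary of the rescaled set forces $Q_\delta\in\partial D_\delta$. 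Finally, the quantitative estimate $|Q_\delta - P_\delta| = o(\delta^{1/N})$ follows by writing $Q_\delta - P_\delta = \Phi_\delta(\bari(\widetilde D_\delta)) - \Phi_\delta(0)$ and using that $\Phi_\delta$ is Lipschitz (uniformly in $\delta$, by Remark \ref{rmk:unoinmeno}) together with the rescaling: in the unscaled variables the relevant displacement is $\delta^{1/N}\bari(\widetilde D_\delta)$, and $|\bari(\widetilde D_\delta)| = o_\delta(1)$ from item (1) gives exactly $|Q_\delta - P_\delta|\le C\delta^{1/N}|\bari(\widetilde D_\delta)| = o(\delta^{1/N})$.

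The main obstacle I anticipate is the careful bookkeeping in item (2) to justify rigorously that $Q_\delta\in\partial D_\delta$ (as opposed to merely $Q_\delta\in\partial\Omega$ near $D_\delta$). The subtlety is that $\bari(\widetilde D_\delta)$, being on the reflection hyperplane, should be thought of as the preimage of a point on $\partial\Omega$ that is simultaneously a limit of points of $D_\delta$ and of $\Omega\setminus \overline{D_\delta}$; one needs the superlevel set structure \eqref{eq:sopralivello}, the fact that $\widetilde D_\delta$ contains a full neighborhood of its barycenter intersected with $\{z_N\ge 0\}$ (from \eqref{eq:inclusion}), and that its complement also meets every neighborhood of $\bari(\widetilde D_\delta)$, to conclude $\bari(\widetilde D_\delta)\in\partial\widetilde D_\delta \cap \{z_N = 0\}$ and hence, pulling back by $\Phi_\delta$, that $Q_\delta\in\partial D_\delta\cap\partial\Omega$. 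Everything else is a routine consequence of the measure-theoretic closeness \eqref{eq:inclusion}, the uniform smoothness of the $\Phi_\delta$, and the scaling relations already set up in Section \ref{sec:boundbelow}.
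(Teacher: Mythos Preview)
Your approach is essentially identical to the paper's: symmetry of $\widetilde D_\delta$ forces $\bari(\widetilde D_\delta)\in\{z_N=0\}$, the inclusions \eqref{eq:inclusion} give $|\bari(\widetilde D_\delta)|=o_\delta(1)$ via an estimate on the symmetric difference, and the estimate $|Q_\delta-P_\delta|=o(\delta^{1/N})$ then follows from the uniform Lipschitz bound on $\Phi_\delta$ together with the $\delta^{1/N}$ rescaling. This is exactly what the paper does, and you have the scaling bookkeeping right.

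The one point where you go astray is the part you flag as the ``main obstacle,'' namely $Q_\delta\in\partial D_\delta$. Your discussion there is confused: you write that one should conclude $\bari(\widetilde D_\delta)\in\partial\widetilde D_\delta$, but this is false --- by \eqref{eq:inclusion} the set $\widetilde D_\delta$ contains a full ball of radius close to $r_2$ about the origin, and $|\bari(\widetilde D_\delta)|=o_\delta(1)$, so the barycenter sits in the \emph{interior} of $\widetilde D_\delta$. You also invoke the level set $\{u_\delta=t_\delta\}$, which is irrelevant here. The correct (and much shorter) argument is the one the paper gives in one line: since $\bari(\widetilde D_\delta)\in\{z_N=0\}$, the image $Q_\delta$ lies on $\partial\Omega$; and since $\bari(\widetilde D_\delta)$ is interior to $\widetilde D_\delta$, the corresponding point satisfies $Q_\delta\in\overline{D_\delta}$. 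Hence $Q_\delta\in\overline{D_\delta}\cap\partial\Omega\subset\partial D_\delta$, with no subtlety. This is not a gap that breaks your proof, but you should drop the muddled free-boundary reasoning and replace it with this direct observation.
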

\begin{proof}
First, by symmetry of $\widetilde{D}_{\delta}$, we obtain that $\bari(\widetilde{D}_{\delta}) \in \{z:z_N=0\}$ and thus $Q_{\delta}\in \partial D_{\delta}$ (recall equation \eqref{eq:Dtilde}). 

Next, for every $\eps>0$,
let $B_{r_{-}}(\Psi_\delta(P_{\delta}))$, and $B_{r_{+}}(\Psi_\delta(P_{\delta}))$  be such that 
$|B_{r_{\pm}}|=2(1\pm\eps)$ and, for $\delta$ sufficiently small,  
\[
 B_{r_{-}}\subset \widetilde D_\delta\subset B_{r_{+}}.
\]
We infer
\[
\begin{split}
\left| \bari(\widetilde{D}_{\delta})-\Psi_\delta(P_{\delta})\right|
&\leq
\frac1{|\widetilde{D}_{\delta}|}\left|\int_{\widetilde{D}_{\delta}}\left(x-\Psi_\delta(P_{\delta})\right)dx\right|
=\frac1{|\widetilde{D}_{\delta}|}\left|\int_{\widetilde{D}_{\delta}\setminus B_{r_{-}}(\Psi_\delta(P_{\delta}))}\left(x-\Psi_\delta(P_{\delta})\right)dx\right|
\\
&\leq  \frac1{|\widetilde{D}_{\delta}|}\int_{B_{r_{+}}(\Psi_\delta(P_{\delta}))\setminus B_{r_{-}}(\Psi_\delta(P_{\delta}))}|x-\Psi_\delta(P_{\delta})|dx
\leq C \eps (1+o_\delta(1))
\end{split}
\]
and the lemma follows.
\end{proof}

Taking into account the lemma above, we change reference system in the blow-up analysis by a vanishing translation, and from now on we assume that
\begin{equation}\label{eq:cambiotrasla}
\Psi_\delta(Q_{\delta})=\bari(\widetilde{D}_{\delta}) = 0,\qquad \Psi_\delta(P_{\delta}) = -\bari(\widetilde{D}_{\delta}).
\end{equation}
Although in principle this changes the definitions of  $\widetilde{D}_{\delta}$, $w_\delta$, 
$\Psi_\delta$ and so on, Lemma \ref{lem:bari} implies that all the results in Section 
\ref{sec:boundbelow} hold true also in the new reference system; in particular, equation 
\eqref{eq:inclusion} holds with balls centered at $0=\bari(\widetilde{D}_{\delta})$.

%%%%

\begin{proposition}\label{pro:nearlysphericalalpha}
For $\delta$ sufficiently small, $\widetilde{D}_{\delta}$ is nearly spherical of class 
$C^{1,\alpha}$, $0<\alpha<1$, centered at $0=\bari(\widetilde{D}_{\delta})$ and  parametrized by 
$\varphi_{\delta}$. In addition
\[
\|\varphi_{\delta}\|_{C^{1,\alpha}(\mathbb{S}^{N-1})}\to 0, \qquad \text{as $\delta\to 0$}.
\]
\end{proposition}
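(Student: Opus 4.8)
The plan is to combine the already-established qualitative information about the blow-up sequence with a standard implicit function theorem argument, upgraded quantitatively using the convergence of $w_\delta$ to $w$.

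First I would recall that, by \eqref{eq:sopralivello}, $\widetilde D_\delta$ is the superlevel set $\{w_\delta>t_\delta\}$, and that by \eqref{eq:inclusion} we have $(1-o_\delta(1))B_{r_2}\subset \widetilde D_\delta\subset(1+o_\delta(1))B_{r_2}$ with balls centered at $0=\bari(\widetilde D_\delta)$ (recall \eqref{eq:cambiotrasla}). In particular $t_\delta\to w(r_2)$ and, since $w$ is radial, radially strictly decreasing and $C^{1,1}$ with $w'(r_2)<0$ (this is where $|m|=\beta$ outside $B_{r_2}$ and the strong maximum principle on the transmission problem enter, but it is already known from \eqref{eq:lim_prob}), we have $|\nabla w|\ge c_0>0$ on an annulus $\{r_2-\eta\le|z|\le r_2+\eta\}$. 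By the $C^{1,\alpha}_{\loc}$ convergence $w_\delta\to w$ of Proposition \ref{prop:vecchiolavoro}, for $\delta$ small $|\nabla w_\delta|\ge c_0/2$ on the same annulus, and $\widetilde D_\delta$ is contained in that annulus' inner ball together with the annulus. Hence along every ray $\theta\in\mathbb S^{N-1}$ the function $s\mapsto w_\delta(s\theta)$ is strictly decreasing through the level $t_\delta$, so there is a unique $s=r_2+\varphi_\delta(\theta)$ with $w_\delta((r_2+\varphi_\delta(\theta))\theta)=t_\delta$; this defines $\varphi_\delta$, and the inclusions give $\|\varphi_\delta\|_{L^\infty}\to0$, in particular $\|\varphi_\delta\|_{L^\infty}\le r_2/2$ for small $\delta$.

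Next I would get the regularity and the $C^{1,\alpha}$ convergence. The implicit function theorem applied to $F_\delta(\theta,s):=w_\delta(s\theta)-t_\delta$, whose $s$-derivative $\nabla w_\delta(s\theta)\cdot\theta$ is bounded away from $0$, shows $\varphi_\delta$ is as regular as $\nabla w_\delta$ allows in the tangential directions; since $w_\delta\in W^{2,p}_{\loc}$ for all $p$ by Remark \ref{rmk:non_div_form}, Morrey embedding gives $w_\delta\in C^{1,\alpha}_{\loc}$ with a $\delta$-uniform bound on the relevant annulus (elliptic estimates for the non-divergence form equation, as in the proof of Lemma \ref{lem:expdecayw}), hence $\varphi_\delta\in C^{1,\alpha}(\mathbb S^{N-1})$ with a uniform bound. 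For the convergence $\|\varphi_\delta\|_{C^{1,\alpha}}\to0$: the limit profile's level set $\{w=w(r_2)\}$ is exactly $\partial B_{r_2}$, i.e. the ``limit $\varphi$'' is $0$; writing $\varphi_\delta$ through the IFT formula in terms of $w_\delta$, $\nabla w_\delta$ and $t_\delta$, and using that $w_\delta\to w$, $\nabla w_\delta\to\nabla w$ in $C^{0,\alpha}$ on the annulus while $t_\delta\to w(r_2)$, one deduces $\varphi_\delta\to0$ in $C^{1,\alpha}$. A clean way to phrase the last step is a compactness/contradiction argument: a uniform $C^{1,\alpha'}$ bound for $\alpha'>\alpha$ plus pointwise convergence to $0$ forces $C^{1,\alpha}$ convergence to $0$.

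The main obstacle I expect is not the implicit function theorem itself but ensuring a genuinely $\delta$-uniform lower bound $|\nabla w_\delta|\ge c_0/2$ on a fixed annulus around $\partial B_{r_2}$, together with a $\delta$-uniform $C^{1,\alpha}$ bound there. The lower bound relies on the strict monotonicity and the non-vanishing gradient of the limit $w$ at radius $r_2$ (an ODE/Bessel-function fact about \eqref{eq:lim_prob}) combined with the locally uniform $C^{1}$ convergence of Proposition \ref{prop:vecchiolavoro}; the uniform $C^{1,\alpha}$ bound is interior elliptic regularity for \eqref{eq:eq_in_forma_di_div} (or its non-divergence reformulation) with Lipschitz coefficients that are themselves uniformly controlled, exactly as already used in Lemma \ref{lem:expdecayw}. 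Once these two uniform estimates are in place, everything else is routine, and the finer $C^{1,1}$ statement of Theorem \ref{thm:nearlyspher1} will be obtained later via the transmission-problem regularity results, so here it suffices to work at the $C^{1,\alpha}$ level.
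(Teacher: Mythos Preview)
Your proposal is correct and follows essentially the same route as the paper: localize $\partial\widetilde D_\delta$ in a fixed annulus via \eqref{eq:inclusion}, use the $C^{1,\alpha}$ convergence $w_\delta\to w$ from Proposition \ref{prop:vecchiolavoro} to ensure the radial derivative of $w_\delta$ is uniformly bounded away from zero there, apply the implicit function theorem to $F(\varphi,\theta)=w_\delta((r_2+\varphi)\theta)-t_\delta$, and deduce both the $C^{1,\alpha}$ regularity of $\varphi_\delta$ and its convergence to $0$ from the explicit IFT formula for $\nabla\varphi_\delta$ in terms of the tangential gradient of $w_\delta$. The paper's proof is slightly terser (it cites \cite[Proposition 3.11]{ferreri_verzini2} and writes out the gradient formula directly), but the ingredients and logic are the same.
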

\begin{proof}
The proof can be obtained (with obvious changes ) as in \cite[Proposition 3.11]{ferreri_verzini2}.
Indeed, using polar coordinates and recalling \eqref{eq:sopralivello} we can write
\[
\widetilde{D}_{\delta}=\left\{z\in B_{k\delta^{-1/N} } : w_{\delta}(z)>t_{\delta}\right\}=\left\{w_{\delta}(\rho\theta)>t_{\delta}\right\}
\]
where $\rho>0$ and $\theta=\frac{x}{|x|} \in {\mathbb S}^{N-1}$. Moreover, in view of  \eqref{eq:inclusion}  one has
\[
\partial\widetilde{D}_{\delta}\subset (1+o_{\delta}(1))B_{r_2}\setminus (1-o_{\delta}(1))B_{r_2}\subset B_{\overline{r}}\setminus B_{\underline{r}},
\]
for some $0<\underline{r}<r_{2}<\overline{r}$. 
Let us consider $F(\vfi,\theta):=w_{\delta}((r_{2}+\vfi)\theta)$ for $\vfi=\rho-r_{2}$
and $\rho \in [\underline{r},\overline{r}]$.
As $w_{\delta}$ converges to $w$ in $C^{1,\alpha}(\overline{B}_{\overline{r}})$, so that
\[
\max_{\overline{B_{\overline{r}}\setminus B_{\underline{r}}}}\partial_{r} w_{\delta}<\frac12\max_{\overline{B_{\overline{r}}\setminus B_{\underline{r}}}}\partial_{r} w <0,
\quad \text{for every $\theta \in {\mathbb S}^{N-1}$. }
\]
Then we can apply the Implicit Function Theorem  to the function $F(\rho,\theta)$, obtaining 
$F(\rho,\theta)=t_{\delta}$ if and only if 
$\rho=\rho(\theta)$. This argument can be implemented for every $\theta \in 
{\mathcal \mathbb{S}^{N-1}}$ , so that by compactness we obtain a globally defined 
$\varphi_{\delta}(\theta)=\rho(\theta)-r_{2}$. 
Since  
$w_{\delta} $ is of class $C^{1,\alpha}$,  the same regularity holds for  
$\varphi_{\delta}$.
Furthermore, 
\[
\nabla \varphi_{\delta}=-\frac{\left(r_{2}+\varphi_{\delta}(\theta)\right)}{\partial_{\rho}F(\rho,\theta)}\nabla_{T}w_{\delta}
=-\frac{\left(r_{2}+\varphi_{\delta}(\theta)\right) }{\partial_{\rho}F(\rho,\theta)}\left(
\nabla w_{\delta}-(\nabla w_{\delta} \cdot \theta)\theta\right).
\]
where $\nabla_{T}w_{\delta}$ denotes the tangential component of the gradient of $w_{\delta}$, and  $\theta=\frac{x}{|x|}$
Then the conclusion is a direct consequence of the $C^{1,\alpha}$ convergence of $w_{\delta}$.
\end{proof}

In order to improve the asymptotic information provided in the last proposition, from now on 
we assume that, for some $\theta>0$, 
\[
\partial \Omega \text{ is of class }C^{3,\theta},
\]
so that $\Phi_\delta$ is of class $C^{2,\theta}$; we are going to prove a $C^{1, 1}$ 
global regularity result (up to the boundary) for the eigenfunctions $w_\delta$. From this, 
we will deduce the $C^{1, 1}$ regularity for the nearly spherical representation $\varphi_\delta$. 
This yields a $C^{1, 1}$ convergence result of $\widetilde{D}_{\delta}$ to the ball with 
measure two, thus improving Proposition \ref{pro:nearlysphericalalpha}. 

To this aim, we follow the strategy of \cite[Section 5]{ferreri_verzini2}, where 
analogous results were derived for a similar problem with Dirichlet boundary conditions. 
In particular, we 
deduce the desired regularity as a corollary of the regularity for transmission problems 
(see \cite{Caffarelli2021:TransmissionProblems, Dong2021:TransmissionProblems}). 
The main difficulty in extending the regularity results in \cite[Section 5]{ferreri_verzini2} 
to the problem we consider here is the positioning of the optimal favorable regions: in the case of 
homogeneous Dirichlet boundary conditions the favorable regions asymptotically 
concentrate in the interior of $\Omega$ (\cite[Theorem 1.1]{ferreri_verzini2}), while for 
Neumann boundary conditions the concentration occurs at $\partial \Omega$ (Theorem 
\ref{th:intro_D}). Hence, \cite[Section 5]{ferreri_verzini2} essentially deals with interior 
regularity results, and we need to extend all results up to the boundary of $\Omega$.
\begin{proposition}
\label{prop:TransmissionPb}
  Assume that $\partial \Omega$ is of class $C^{3,\theta}$. Then, for all $i = 1, \dots, N-1$, the functions $\partial_i w_{\delta}$ are $H^1\left( B_{\kappa\delta^{-1/N}}^+ \right)$-solutions of the transmission problem
\begin{equation}\label{eqn:TransmissionPb}
\begin{cases}
\begin{aligned}
-\diverg( A^{\delta} \nabla &\partial_i w_{\delta} ) 
 = \diverg( (\partial_i A^{\delta} )\nabla w_{\delta} ) \\ &+ \delta^{2/N} \od(\delta) \widetilde m_\delta (J_{\delta} \partial_i w_{\delta} + w_{\delta} \partial_i J_{\delta}  )
\end{aligned} 
  & \text{in } \left( \widetilde D_\delta \cap B_{\kappa\delta^{-1/N}}^+ \right) \bigcup \left( B_{\kappa\delta^{-1/N}}^+ \setminus \overline{\widetilde D_\delta} \right) , \\
[\partial_i w_{\delta}] = 0,& \text{on } \partial\widetilde D_\delta \cap B_{\kappa\delta^{-1/N}}^+, \\
[A^{\delta} \nabla ( \partial_i w_{\delta}) \cdot \nu] = - \delta^{2/N} \od(\delta) (1+\beta) w_{\delta} J_{\delta} \nu_i & \text{on } \partial\widetilde D_\delta \cap B_{\kappa\delta^{-1/N}}^+, \\
\partial_N \partial_i w_{\delta} = 0 = A^{\delta} \nabla \partial_i w_{\delta} \cdot e_N & \text{on }
 \{z_N = 0 \} \cap B_{\kappa\delta^{-1/N}} ,
\end{cases}
\end{equation}
where $[\cdot]$ denotes the jump across $\partial\widetilde D_\delta \cap B_{\kappa\delta^{-1/N}}^+$, 
$\nu_i$ denotes the $i$-th component of the outer unit normal to $\widetilde D_\delta \cap B_{\kappa\delta^{-1/N}}^+$, and $e_N$ denotes the normal to the hyperplane $\{z_N = 0 \}$. Moreover, there exists a constant $C>0$ such that
\begin{equation}\label{eqn:C2aUniformBound}
\| w_{\delta} \|_{C^{2, \theta} \left( \overline{\widetilde D_\delta \cap B_{\kappa\delta^{-1/N}}^+} \right)} + \| w_{\delta} \|_{C^{2, \theta} \left( \overline{ B_{\kappa\delta^{-1/N}}^+ \setminus \widetilde D_\delta } \right)} \le C,
\end{equation}
uniformly as $\delta \to 0$.
\end{proposition}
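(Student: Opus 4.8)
The plan is to establish \eqref{eqn:TransmissionPb} by differentiating the divergence-form equation \eqref{eq:eq_in_forma_di_div} tangentially, and then to bootstrap regularity using the Schauder-type theory for transmission problems developed in \cite{Caffarelli2021:TransmissionProblems, Dong2021:TransmissionProblems}.

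\medskip

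\textbf{Derivation of the transmission problem.} First I would fix $i\in\{1,\dots,N-1\}$ and test the weak formulation of \eqref{eq:eq_in_forma_di_div} against functions of the form $-\partial_i\zeta$, with $\zeta\in C^\infty_c$ compactly supported in $B^+_{\kappa\delta^{-1/N}}$ (the fact that $i\neq N$ is essential here, as the even reflection across $\{z_N=0\}$ preserves smoothness only in the $z'$ variables, and it makes the boundary term on $\{z_N=0\}$ disappear after integration by parts in $z_i$). Moving the $\partial_i$ onto the coefficients gives, formally,
\[
\int A^\delta\nabla(\partial_i w_\delta)\cdot\nabla\zeta = -\int (\partial_i A^\delta)\nabla w_\delta\cdot\nabla\zeta + \delta^{2/N}\od(\delta)\int \widetilde m_\delta\, \partial_i\!\left(J_\delta w_\delta\right)\zeta,
\]
so that $\partial_i w_\delta\in H^1$ (which holds since $w_\delta\in W^{2,p}$ by Remark \ref{rmk:non_div_form}) is a weak solution of the stated PDE in the open set where $\widetilde m_\delta$ is constant, i.e.\ on $(\widetilde D_\delta\cap B^+)\cup(B^+\setminus\overline{\widetilde D_\delta})$. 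The jump conditions arise because $\widetilde m_\delta$ has a jump of size $1-(-\beta)=1+\beta$ across $\partial\widetilde D_\delta$: the term $\delta^{2/N}\od(\delta)\,\widetilde m_\delta\,\partial_i(J_\delta w_\delta)\,\zeta$, when the derivative falls on the indicator hidden in $\widetilde m_\delta$, produces a distributional contribution $-\delta^{2/N}\od(\delta)(1+\beta)J_\delta w_\delta\,\nu_i$ concentrated on $\partial\widetilde D_\delta$, which is exactly the conormal jump. Continuity of $\partial_i w_\delta$ itself across $\partial\widetilde D_\delta$ ($[\partial_i w_\delta]=0$) follows because $w_\delta\in W^{2,p}\subset C^{1,\alpha}$ globally by Remark \ref{rmk:non_div_form}, so all first derivatives are already continuous. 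The Neumann condition on $\{z_N=0\}$ is inherited: since $\partial_N w_\delta=0$ there, also $\partial_i\partial_N w_\delta=\partial_N\partial_i w_\delta=0$ on $\{z_N=0\}$ (again using $i\neq N$, so that $\partial_i$ is a tangential derivative along $\{z_N=0\}$).

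\medskip

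\textbf{Regularity bootstrap.} For the uniform $C^{2,\theta}$ bound \eqref{eqn:C2aUniformBound}, I would argue as in \cite[Section 5]{ferreri_verzini2}, carefully tracking constants. The right-hand side $\diverg((\partial_i A^\delta)\nabla w_\delta)+\delta^{2/N}\od(\delta)\widetilde m_\delta(J_\delta\partial_i w_\delta+w_\delta\partial_i J_\delta)$ is, by Proposition \ref{prop:vecchiolavoro} and Lemma \ref{lem:expdecayw}, in divergence form with a bounded $L^p$ (indeed $C^\alpha$ away from $\partial\widetilde D_\delta$) datum plus a bounded term, uniformly in $\delta$; and the coefficient matrix $A^\delta$ is, by Remark \ref{rmk:non_div_form} and the assumption $\partial\Omega\in C^{3,\theta}$, of class $C^{1,\theta}$ in $z'$ and Lipschitz overall, again uniformly in $\delta$ (since $D\Phi_\delta(0)=\Id$ and the relevant norms are controlled by the $C^{3,\theta}$ norm of $\partial\Omega$). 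Applying the $W^{2,p}$ transmission estimates of \cite{Dong2021:TransmissionProblems} gives $\partial_i w_\delta\in W^{2,p}$, hence $w_\delta\in W^{3,p}\subset C^{2,\alpha}$ on each side of $\partial\widetilde D_\delta$; then the Schauder transmission estimates of \cite{Caffarelli2021:TransmissionProblems} applied to $\partial_i w_\delta$ upgrade this to $\partial_i w_\delta\in C^{1,\theta}$ on each side, i.e.\ $w_\delta\in C^{2,\theta}$ on each of $\overline{\widetilde D_\delta\cap B^+}$ and $\overline{B^+\setminus\widetilde D_\delta}$. The only missing direction is $\partial^2_{NN}w_\delta$, but this is recovered directly from the equation \eqref{eq:eq_in_forma_di_div} written in nondivergence form: solving for $\partial^2_{NN}w_\delta$ in terms of the already-controlled tangential second derivatives, first derivatives, and the bounded right-hand side. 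Uniformity in $\delta$ is obtained by noting that all the structural constants (ellipticity of $A^\delta$, its $C^{1,\theta}$-in-$z'$ and Lipschitz norms, the regularity of $\partial\widetilde D_\delta$ coming from Proposition \ref{pro:nearlysphericalalpha}, and the $C^{1,\alpha}_{\loc}$ bounds on $w_\delta$ from Proposition \ref{prop:vecchiolavoro}) are uniform; the exponential decay from Lemma \ref{lem:expdecayw} handles the region near $\partial B_{\kappa\delta^{-1/N}}$, so the estimate is genuinely global up to $\{z_N=0\}$.

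\medskip

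\textbf{Main obstacle.} The delicate point is handling the regularity \emph{up to the flat boundary} $\{z_N=0\}$, where the transmission interface $\partial\widetilde D_\delta$ meets the Neumann boundary: one must combine the transmission estimates with even reflection across $\{z_N=0\}$ so that the reflected equation is again a transmission problem of the same type (this is why the $C^{1,\theta}$ dependence of $A^\delta$ is required specifically \emph{in the $z'$ variables}, per Remark \ref{rmk:non_div_form}), and one needs to know a priori that $\partial\widetilde D_\delta$ meets $\{z_N=0\}$ transversally and that the reflected interface is of matching regularity — which follows from the near-sphericity of $\widetilde D_\delta$ (Proposition \ref{pro:nearlysphericalalpha}) together with the symmetry $\widetilde D_\delta = \widetilde D_\delta'\cup\{(z',-z_N):(z',z_N)\in\widetilde D_\delta'\}$. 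Ensuring all constants are $\delta$-independent throughout this reflected/transmission bootstrap, uniformly as the domain $B_{\kappa\delta^{-1/N}}$ grows, is the technically demanding part and is where the bulk of the work lies.
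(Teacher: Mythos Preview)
Your proposal is correct and follows essentially the same route as the paper: derive the transmission problem for $\partial_i w_\delta$ by differentiating \eqref{eq:eq_in_forma_di_div} tangentially (via integration by parts against $\partial_i\zeta$, producing the conormal jump from the $(1+\beta)$ discontinuity of $\widetilde m_\delta$), reflect evenly across $\{z_N=0\}$ using that $A^\delta$ is $C^{1,\theta}$ in $z'$, apply the transmission estimates of \cite{Dong2021:TransmissionProblems} to obtain $\partial_i w_\delta\in C^{1,\theta}$ on each side, and recover $\partial^2_{NN}w_\delta$ directly from the equation. The paper applies \cite[Theorem 1.2]{Dong2021:TransmissionProblems} in one step rather than your two-step $W^{2,p}$-then-Schauder bootstrap, and it notes explicitly that two technical adjustments (moving interface and Dirichlet boundary conditions) are needed to match the hypotheses there, deferring the details to the proof of Proposition \ref{prop:TangentDerivaticesC11To0}; otherwise the arguments coincide.
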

\begin{proof}
Let us start with \eqref{eqn:TransmissionPb}. To begin with, notice that $\partial_N \partial_i w_{\delta} = 0 = A^{\delta} \nabla \partial_i w_{\delta} \cdot e_N$ a.e. on $\{z_N = 0 \} \cap \overline{B_{\kappa\delta^{-1/N}}^+}$, since we are considering only derivatives of $w_{\delta}$ tangential to $\{z_N = 0 \}$. Using the integration by parts formula, for any $\varphi \in C^{\infty}_c\left( B_{\kappa\delta^{-1/N}}^+ \right)$ we have
\begin{align}\label{eqn:eqn1C2aProp}
\begin{split}
    & \delta^{2/N} \od(\delta) \int_{B_{\kappa\delta^{-1/N}}^+} \widetilde m_\delta \varphi J_{\delta} \partial_i w_{\delta} = 
    \delta^{2/N} \od(\delta) \left[ \int_{\widetilde D_\delta \cap B_{\kappa\delta^{-1/N}}^+} \varphi J_{\delta} \partial_i w_{\delta} - 
    \beta \int_{B_{\kappa\delta^{-1/N}}^+ \setminus \widetilde D_\delta} \varphi J_{\delta} \partial_i w_{\delta} \right] = \\
    & =\delta^{2/N} \od(\delta) (1+\beta) \int_{\partial \widetilde D_\delta \cap B_{\kappa\delta^{-1/N}}^+} w_{\delta} J_{\delta} \varphi \nu_i - \delta^{2/N} \od(\delta) \int_{B_{\kappa\delta^{-1/N}}^+} \widetilde m_\delta \left( w_{\delta} J_{\delta} \partial_i \varphi + w_{\delta} \varphi \partial_i J_{\delta} \right),
    \end{split}
\end{align}
Now, using \eqref{eq:eq_in_forma_di_div} and integrating again by parts we have that
\begin{align}\label{eqn:eqn2C2aProp}
\begin{split}
    & \delta^{2/N} \od(\delta) \int_{B_{\kappa\delta^{-1/N}}^+} \widetilde m_\delta w_{\delta} J_{\delta}\partial_i \varphi = \delta^{2/N} \od(\delta) \left[ \int_{\widetilde D_\delta \cap B_{\kappa\delta^{-1/N}}^+} w_{\delta} J_{\delta}\partial_i \varphi  - 
    \beta \int_{B_{\kappa\delta^{-1/N}}^+ \setminus \widetilde D_\delta} w_{\delta}J_{\delta} \partial_i \varphi \right] = 
    \\
    & = - \left[ \int_{\widetilde D_\delta \cap B_{\kappa\delta^{-1/N}}^+} \diverg\left( A^{\delta} \nabla w_{\delta} \right) \partial_i \varphi  +
    \int_{B_{\kappa\delta^{-1/N}}^+ \setminus \widetilde D_\delta} \diverg\left( A^{\delta} \nabla w_{\delta} \right) \partial_i \varphi \right] = 
    \\
    & = \int_{B_{\kappa\delta^{-1/N}}^+} A^{\delta} \nabla w_{\delta} \cdot  \partial_i \nabla \varphi = - \int_{B_{\kappa\delta^{-1/N}}^+} A^{\delta} \nabla \partial_i w_{\delta} \cdot \nabla \varphi - \int_{B_{\kappa\delta^{-1/N}}^+} \partial_i \left(A^{\delta}\right) \nabla w_{\delta} \cdot \nabla \varphi.
\end{split}
\end{align}
Combining \eqref{eqn:eqn1C2aProp} and \eqref{eqn:eqn2C2aProp} we obtain \eqref{eqn:TransmissionPb}.

Now we turn to \eqref{eqn:C2aUniformBound}. Notice that the condition $\partial_N \partial_i w_{\delta} = 0$ on $\{z_N = 0 \} \cap \overline{B_{\kappa\delta^{-1/N}}^+}$ and the properties of the diffeomorphism $\Phi_{\delta}$ (see Section \ref{sec:boundbelow}) allow to extend $\partial_i w_{\delta}$ by reflection on the whole $B_{\kappa\delta^{-1/N}}$, together with its equation. Again, this reflected function solves a transmission problem, but this time on the whole $B_{\kappa\delta^{-1/N}}$. More precisely,
\begin{equation}\label{eqn:TransmissionPbReflected}
\begin{cases}
-\diverg\left( A^{\delta} \nabla \partial_i w_{\delta} \right) = \delta^{2/N} \od(\delta) \widetilde m_\delta \left(J_{\delta} \partial_i w_{\delta} + w_{\delta} \partial_i J_{\delta}  \right) + \diverg\left( G_i^{\delta} \right) & \text{in } \widetilde D_\delta \bigcup \left( B_{\kappa\delta^{-1/N}} \setminus \overline{\widetilde D_\delta} \right) , \\
[\partial_i w_{\delta}] = 0, \quad [A^{\delta} \nabla ( \partial_i w_{\delta}) \cdot \nu] = - \delta^{2/N} \od(\delta) (1+\beta) w_{\delta}J_{\delta} \nu_i 
& \text{on } \partial\widetilde D_\delta ,
\end{cases}
\end{equation}
where $G_i^{\delta}$ denotes the even extension of the function $\left(\partial_i A^{\delta} \right) \nabla w_{\delta}$ with respect to $\{ z_N = 0 \}$. Since $i\neq N$ and recalling Remark \ref{rmk:non_div_form}, we have that this function is of class $C^{0, \theta}$. Hence, as a consequence of \cite[Theorem 1.2]{Dong2021:TransmissionProblems} we have, for all $i = 1, \dots, N-1$
\begin{equation}\label{eqn:eqn3C2aProp}
\| \partial_i w_{\delta} \|_{C^{1, \theta} \left( \overline{\widetilde D_\delta \cap B_{\kappa\delta^{-1/N}}^+} \right)} + \| \partial_i w_{\delta} \|_{C^{1, \theta} \left( \overline{ B_{\kappa\delta^{-1/N}}^+ \setminus \widetilde D_\delta } \right)} \le C.
\end{equation}
Strictly speaking, the application of \cite[Theorem 1.2]{Dong2021:TransmissionProblems} to 
\eqref{eqn:TransmissionPbReflected} requires 
two adjustments: indeed, such result applies to solutions to transmission problems having fixed 
interface, while here $\partial\widetilde D_\delta$ depends on $\delta$, and homogeneous Dirichlet 
boundary conditions. Actually, by small perturbations, we can modify \eqref{eqn:TransmissionPbReflected} 
to meet both these conditions. We describe such arguments in full details in the proof of Proposition 
\ref{prop:TangentDerivaticesC11To0} ahead, where the same issues have to be faced for a related  transmission problem, see \eqref{eqn:TransmissionPbvdeltaReflected} and \eqref{eqn:TransmissionPbDifference}, to obtain a more delicate 
estimate.

We are left to prove that
\begin{equation}\label{eqn:eqn4C2aProp}
\left\| \partial^2_{N} w_{\delta} \right\|_{C^{0, \theta} \left( \overline{\widetilde D_\delta \cap B_{\kappa\delta^{-1/N}}^+} \right)} + \left\| \partial^2_{N} w_{\delta} \right\|_{C^{0, \theta} \left( \overline{ B_{\kappa\delta^{-1/N}}^+ \setminus \widetilde D_\delta } \right)} \le C.
\end{equation}
Condition \eqref{eqn:eqn4C2aProp} follows isolating $\partial^2_{N} w_{\delta}$ in the equation \eqref{eq:eq_in_forma_di_div} restricted to $B_{\kappa\delta^{-1/N}}^+$, and using \eqref{eqn:eqn3C2aProp}. Hence, the proof is concluded.
\end{proof}
A direct consequence of Proposition \ref{prop:TransmissionPb} is the following
\begin{corollary}
    There exists a constant $C>0$ such that
    \[
    \| w_{\delta} \|_{C^{1, 1} \left( \overline{B_{\kappa\delta^{-1/N}} } \right)} \le C,
    \]
    uniformly as $\delta \to 0$. 
\end{corollary}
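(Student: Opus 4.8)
The plan is to assemble the global $C^{1,1}$ bound from the two one-sided $C^{2,\theta}$ estimates of Proposition~\ref{prop:TransmissionPb} together with the fact that $w_\delta$ is globally $C^1$, via an elementary gluing argument on convex sets. As a preliminary remark, Lemma~\ref{lem:expdecayw} already gives $\|w_\delta\|_{L^\infty}+\|\nabla w_\delta\|_{L^\infty}\le C$ uniformly in $\delta$, and by Remark~\ref{rmk:non_div_form} the function $w_\delta$ is a $W^{2,p}(B_{\kappa\delta^{-1/N}})$ solution of a non-divergence form equation, hence $w_\delta\in C^{1,\alpha}(\overline{B_{\kappa\delta^{-1/N}}})$ for every $\alpha<1$; in particular $\nabla w_\delta$ is \emph{continuous} across the interface $\partial\widetilde D_\delta$, since the transmission conditions in~\eqref{eqn:TransmissionPb} only produce a jump of certain second derivatives. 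It therefore suffices to bound the Lipschitz seminorm of $\nabla w_\delta$ uniformly in $\delta$.

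By~\eqref{eqn:C2aUniformBound} there is $L>0$, independent of $\delta$, such that $\nabla w_\delta$ is $L$-Lipschitz on each of the two closed regions $\overline{\widetilde D_\delta\cap B^+_{\kappa\delta^{-1/N}}}$ and $\overline{B^+_{\kappa\delta^{-1/N}}\setminus\widetilde D_\delta}$. One then invokes the following elementary fact: if $K$ is convex, $K=K_1\cup K_2$ with $K_1,K_2$ closed, and $F\colon K\to\R^m$ is continuous and $L$-Lipschitz on each $K_j$, then $F$ is $L$-Lipschitz on $K$. Indeed, given $x,y\in K$, restrict to the segment $[x,y]\subset K$, set $I_j:=[x,y]\cap K_j$ (closed, with union $[x,y]$) and $\tau:=\sup\{r\in[x,y]:\ |F(r)-F(x)|\le L|r-x|\}$; if $\tau$ is an interior point, it belongs to $I_1$ or $I_2$, and approaching $\tau$ from the right within that set (or, if $\tau$ is not right-accumulated from within it, observing that then $\tau\in I_1\cap I_2$ and arguing within the other set) together with the local Lipschitz bound contradicts the maximality of $\tau$, so $\tau=y$ and $|F(x)-F(y)|\le L|x-y|$. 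Applying this with $F=\nabla w_\delta$, $K=\overline{B^+_{\kappa\delta^{-1/N}}}$ and the two regions above shows that $\nabla w_\delta$ is $L$-Lipschitz on $\overline{B^+_{\kappa\delta^{-1/N}}}$. Finally, the bound extends from the half-ball to the whole $\overline{B_{\kappa\delta^{-1/N}}}$: since $w_\delta$ is even in $z_N$ with $\partial_N w_\delta=0$ on $\{z_N=0\}$ (see~\eqref{eq:eq_in_forma_di_div}), the tangential components $\partial_i w_\delta$, $i<N$, remain $L$-Lipschitz, while $\partial_N w_\delta$ is odd and vanishes on $\{z_N=0\}$, hence is Lipschitz across that hyperplane with the same constant. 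Thus $\nabla w_\delta$ is Lipschitz on $\overline{B_{\kappa\delta^{-1/N}}}$ with constant bounded by $CL$, and together with the $W^{1,\infty}$ bound this yields the assertion.

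The only delicate point to keep in mind is that one should \emph{not} try to obtain the global estimate by flattening $\partial\widetilde D_\delta$ with a change of coordinates: at this stage the interface is known only to be $C^{1,\theta}$ (Proposition~\ref{pro:nearlysphericalalpha}), and composing a $C^{1,1}$ function with a $C^{1,\theta}$ diffeomorphism produces merely a $C^{1,\theta}$, not a $C^{1,1}$, bound. The segment argument above sidesteps this entirely, using only convexity of the ball and continuity of $\nabla w_\delta$; all the substantial work has already been carried out in Proposition~\ref{prop:TransmissionPb}, and the corollary simply glues its two one-sided bounds into a global one.
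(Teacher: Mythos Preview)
Your proof is correct and follows the same strategy as the paper: deduce $C^{1,1}$ regularity on the closed half-ball from the two one-sided $C^{2,\theta}$ bounds of Proposition~\ref{prop:TransmissionPb}, then extend to the full ball via the even reflection and $\partial_N w_\delta=0$ on $\{z_N=0\}$. The paper's own proof is terse and leaves implicit exactly the gluing step (a continuous function that is $L$-Lipschitz on two closed pieces covering a convex set is globally $L$-Lipschitz) that you spell out carefully with the segment argument.
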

\begin{proof}
    It is sufficient to notice that Proposition \ref{prop:TransmissionPb} gives the $C^{1, 1}$ regularity of $w_{\delta}$ on the closed half ball $\overline{B_{\kappa\delta^{-1/N}}^+}$, and that, since $\partial_N w_{\delta} = 0$ on $\{ z_N = 0 \}$, such regularity is preserved by reflection with respect to $\{ z_N = 0 \}$. Moreover, 
    \[
    \| w_{\delta} \|_{C^{1, 1} \left( \overline{B_{\kappa\delta^{-1/N}}} \right)} \le 2 \| w_{\delta} \|_{C^{1, 1} \left( \overline{B_{\kappa\delta^{-1/N}}^+} \right)}
    \]
\end{proof}
Our next aim is to prove the following
\begin{proposition}\label{prop:TangentDerivaticesC11To0}
    For any $r \in (0, r_{2}/4)$,  
    \begin{align}\label{eqn:C1aTangentialConvergence}
        \begin{split}
            \| \nabla w_{\delta} - \left(\nabla w_{\delta} \cdot n\right) n & \|_{C^{1, \theta} \left( \overline{\widetilde D_\delta \cap \left(B_{\kappa\delta^{-1/N}}^+ \setminus B_r \right) } \right)} + \\
            & + \| \nabla w_{\delta} - \left(\nabla w_{\delta} \cdot n\right) n \|_{C^{1, \theta} \left( \overline{ B_{\kappa\delta^{-1/N}}^+ \setminus \left( \widetilde D_\delta \cup B_r \right) } \right)} \to 0
        \end{split}
    \end{align}
    as $\delta \to 0$, where $n=z/\vert z \vert$.
\end{proposition}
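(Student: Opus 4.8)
The plan is to follow the approach of \cite[Section~5]{ferreri_verzini2}: first reduce the statement to a convergence result for the \emph{angular} derivatives of $w_\delta$, and then deduce the latter from a Schauder-type estimate for the transmission problem they satisfy. For $i\neq j$ introduce the rotation fields $T_{ij}:=z_i\partial_j-z_j\partial_i$ and use the elementary pointwise identity $\partial_kw_\delta-(\nabla w_\delta\cdot n)n_k=|z|^{-2}\sum_j z_jT_{jk}w_\delta$, valid for $z\neq0$ with $n=z/|z|$. On a fixed annulus $\{r/2<|z|<R\}$, with $R:=2r_2$ (which contains $\widetilde{D}_\delta$ once $\delta$ is small, by \eqref{eq:inclusion}), the coefficients $z_j|z|^{-2}$ are smooth with bounded derivatives while $w_\delta$ is uniformly $C^{2,\theta}$ on each side of $\partial\widetilde{D}_\delta$ by \eqref{eqn:C2aUniformBound}; so it is enough to show that $\|T_{ij}w_\delta\|_{C^{1,\theta}}\to0$ on each of the two closed regions cut out by $\partial\widetilde{D}_\delta$. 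On the outer part $\{|z|>R\}$ there is no free boundary and $w_\delta$ solves a uniformly elliptic equation with exponentially small right-hand side (Lemma~\ref{lem:expdecayw}), so interior elliptic estimates already give $C^{1,\theta}$--smallness there; this handles the unbounded piece of the second region in \eqref{eqn:C1aTangentialConvergence}.

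Next I set up the transmission problem for $T_{ij}w_\delta$. Since rotations commute with the Laplacian, applying $T_{ij}$ to \eqref{eq:eq_in_forma_di_div} yields, separately on $\widetilde{D}_\delta$ and on $B_{\kappa\delta^{-1/N}}\setminus\overline{\widetilde{D}_\delta}$ (where $\widetilde m_\delta$ is the constant $1$, resp.\ $-\beta$), an equation of the form
\[
-\diverg\bigl(A^\delta\nabla T_{ij}w_\delta\bigr)=\diverg\bigl(G^\delta_{ij}\bigr)+\delta^{2/N}\od(\delta)\,\widetilde m_\delta\bigl(J_\delta T_{ij}w_\delta+\mathcal{R}^\delta_{ij}\bigr),
\]
where $G^\delta_{ij}$ collects the commutator terms (built from $A^\delta-\Id$, $\nabla A^\delta$ and $T_{ij}A^\delta$) and $\mathcal{R}^\delta_{ij}$ the remaining zeroth-order corrections (of type $w_\delta T_{ij}J_\delta$). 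By \eqref{eq:lemmaA1} one has $A^\delta=\Id+O(\delta^{1/N}|z|)$ and $J_\delta=1+O(\delta^{1/N}|z|)$ with derivatives of order $\delta^{1/N}$, so, using \eqref{eqn:C2aUniformBound} once more, $\|G^\delta_{ij}\|_{C^{0,\theta}}+\|\mathcal{R}^\delta_{ij}\|_{C^{0,\theta}}=O(\delta^{1/N})$ on the annulus. On $\partial\widetilde{D}_\delta$ one has $[T_{ij}w_\delta]=0$ (as $w_\delta\in W^{2,p}$), while the jump of the conormal derivative equals $-\delta^{2/N}\od(\delta)(1+\beta)w_\delta J_\delta(z_i\nu_j-z_j\nu_i)$; since $T_{ij}$ is tangent to spheres centered at the origin and $\partial\widetilde{D}_\delta$ is nearly spherical with $\|\varphi_\delta\|_{C^1}\to0$ (Proposition~\ref{pro:nearlysphericalalpha}), $|z_i\nu_j-z_j\nu_i|\le C\|\varphi_\delta\|_{C^1}=o_\delta(1)$ there. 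Finally $T_{ij}$ maps the Neumann condition on $\{z_N=0\}$ into a Neumann one (if $i,j\neq N$) or into a homogeneous Dirichlet one (if $N\in\{i,j\}$, because $\partial_Nw_\delta=0$ on $\{z_N=0\}$), and the corresponding even or odd reflection in $z_N$ extends $T_{ij}w_\delta$ and its equation to the whole $B_{\kappa\delta^{-1/N}}$, preserving the structure above --- the parity bookkeeping being exactly that of Proposition~\ref{prop:TransmissionPb}.

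To conclude, after composing with a $\delta$-dependent diffeomorphism mapping $\partial\widetilde{D}_\delta$ onto $\partial B_{r_2}$ and $o_\delta(1)$--close to the identity in $C^{1,\alpha}$ (available since $\widetilde{D}_\delta$ is $C^{1,\alpha}$--nearly spherical), and multiplying by a fixed cutoff equal to $1$ on $\{r<|z|<R/2\}$ and supported in $\{r/2<|z|<R\}$, one reaches a transmission problem with fixed interface $\partial B_{r_2}$ and homogeneous Dirichlet exterior data --- precisely the two reductions described after \eqref{eqn:TransmissionPbReflected}. The Schauder estimate of \cite[Theorem~1.2]{Dong2021:TransmissionProblems} then gives, on each side,
\[
\|T_{ij}w_\delta\|_{C^{1,\theta}}\le C\bigl(\|T_{ij}w_\delta\|_{L^\infty}+\|G^\delta_{ij}\|_{C^{0,\theta}}+\|\mathcal{R}^\delta_{ij}\|_{C^{0,\theta}}+o_\delta(1)\bigr),
\]
where the bounded potential $\delta^{2/N}\od(\delta)\widetilde m_\delta J_\delta$ is kept on the left-hand side. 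The data are $o_\delta(1)$ by the previous paragraph, and by Proposition~\ref{prop:vecchiolavoro} $w_\delta\to w$ in $C^1_{\loc}$ with $w$ radial, hence $T_{ij}w\equiv0$ and $\|T_{ij}w_\delta\|_{L^\infty}\to0$ on the annulus; together with the first paragraph this proves \eqref{eqn:C1aTangentialConvergence}.

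The easy part is $T_{ij}w_\delta\to0$ in $C^0$, immediate from the radial symmetry of the limit. The real work --- and the main obstacle --- is upgrading this to the full norm $C^{1,\theta}$ with the sharp exponent: one must present the transmission problem for $T_{ij}w_\delta$ so that \emph{every} error term, both those coming from the non-flatness of the metric coefficients $A^\delta,J_\delta$ and those coming from the non-sphericity of $\partial\widetilde{D}_\delta$, is genuinely $o_\delta(1)$ in $C^{0,\theta}$ (which is exactly where the uniform $C^{2,\theta}$ bounds of Proposition~\ref{prop:TransmissionPb} are indispensable), and then carry out the interface-fixing and cutoff reductions carefully enough that the constant in the Schauder estimate of \cite{Dong2021:TransmissionProblems} stays uniform as $\delta\to0$.
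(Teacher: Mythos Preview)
Your argument is correct, and it reaches the same conclusion as the paper, but the route is genuinely different. The paper works with the Cartesian tangential derivatives $v_{\delta,i}=\partial_i w_\delta$, $i=1,\dots,N-1$: these have a \emph{non-vanishing} conormal jump $-\delta^{2/N}\od(\delta)(1+\beta)w_\delta J_\delta\,\nu_i$ on $\partial\widetilde D_\delta$, so the paper subtracts the corresponding limit $h_i=\partial_i w$ (after composing with the interface-fixing diffeomorphism $\Theta_\delta$) to obtain a transmission problem for $\phi_{\delta,i}=v_{\delta,i}\circ\Theta_\delta-h_i$ with data $o_\delta(1)$, and then recovers the missing direction $\partial_N^2 w_\delta$ from the equation itself. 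You instead differentiate along the rotation fields $T_{ij}=z_i\partial_j-z_j\partial_i$: since the limit is radial one has $T_{ij}w\equiv 0$, and on the nearly spherical interface the conormal jump picks up the factor $z_i\nu_j-z_j\nu_i$, which is $O(\|\varphi_\delta\|_{C^1})=o_\delta(1)$ by Proposition~\ref{pro:nearlysphericalalpha}. Thus the subtraction step and the separate treatment of $\partial_N^2$ disappear, all $T_{ij}$ are handled on the same footing, and the parity under reflection in $z_N$ (even if $N\notin\{i,j\}$, odd otherwise) is exactly what is needed to extend the problem across $\{z_N=0\}$.

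What this buys you is a cleaner bookkeeping and a more symmetry-adapted proof; what the paper's approach buys is that the transmission problem for $\partial_i w_\delta$ is already written down in Proposition~\ref{prop:TransmissionPb}, so no new commutator computation with $A^\delta$ is required. Both proofs ultimately rest on the same three inputs: the uniform $C^{2,\theta}$ bound \eqref{eqn:C2aUniformBound} to control lower-order errors in $C^{0,\theta}$, the interface-fixing diffeomorphism to reduce to a fixed transmission surface $\partial B_{r_2}$, and the Schauder estimate of \cite[Theorem~1.2]{Dong2021:TransmissionProblems} with a $\delta$-independent constant. One small point to make fully explicit in your write-up: the commutator $G^\delta_{ij}$ is most transparently obtained by writing $T_{ij}w_\delta=z_i\partial_j w_\delta-z_j\partial_i w_\delta$ and combining the equations for $\partial_i w_\delta$ from Proposition~\ref{prop:TransmissionPb} (the jump formula there holds for all $i$, including $i=N$; only the flat boundary condition restricts to $i\le N-1$), rather than by commuting $T_{ij}$ with $\diverg(A^\delta\nabla\cdot)$ directly.
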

\begin{proof}
Let $\IM$, $w$ and $m$ be respectively the principal positive eigenvalue, the optimal eigenfunction and the corresponding weight of the design problem in $\R^N$, as introduced in~\eqref{eq:lim_prob}. Recall that $w$ is radially symmetric. 
Let us define $h_{i}=\partial_i w$ the partial derivatives of $w$
in the directions tangent to $\{ z_N = 0 \}$, i.e. $i = 1, \dots, N-1$. 
Notice that each $h_{i}$ is even
 with respect to $\{ z_N = 0 \}$. The functions $h_i$ solve the transmission problem
\begin{equation}\label{eqn:TransmissionPbLimit}
\begin{cases}
-\Delta h_i = \IM m h_i  & \text{in } B_{r_2} \bigcup \left( B_{\kappa\delta^{-1/N}} \setminus B_{r_2} \right) , \\
[h_i] = 0, \quad [\partial_n h_i] = - \IM (1+\beta) w n_i & \text{on } \partial B_{r_2},
\end{cases}
\end{equation}
where $n=z/|z|$.

Now consider the optimal (reflected) eigenfunctions $w_{\delta}$ in the blow-up scale, and denote $v_{\delta, i} := \partial_i w_{\delta}$, for $i = 1, \dots, N-1$. From \eqref{eqn:TransmissionPbReflected}, we know that they solve the transmission problem
\begin{equation}\label{eqn:TransmissionPbvdeltaReflected}
\begin{cases}
-\diverg\left( A^{\delta} \nabla v_{\delta, i} \right) = \delta^{2/N} \od(\delta) \widetilde m_\delta \left( v_{\delta, i} J_{\delta} + w_{\delta} \partial_i J_{\delta}  \right) + \diverg\left( G_i^{\delta} \right) & \text{in } \widetilde D_\delta \bigcup \left( B_{\kappa\delta^{-1/N}} \setminus \overline{\widetilde D_\delta} \right) , \\
[v_{\delta, i}] = 0, \quad [A^{\delta} \nabla v_{\delta, i} \cdot \nu] = - \delta^{2/N} \od(\delta) (1+\beta) w_{\delta} J_{\delta} \nu_i & \text{on } \partial\widetilde D_\delta .
\end{cases}
\end{equation}
In order to compare the functions $v_{\delta, i}$ and $h_i$ in $B_{\kappa\delta^{-1/N}}$, for $\delta$ sufficiently small we introduce a one-parameter family of radial diffeomorphisms $\Theta_{\delta}:\R^N \to \R^N$, with the following properties:
\begin{align}\label{eqn:PropertiesC1aDiffeoTheta}
    \begin{split}
        &\|\Theta_{\delta} - \Id  \|_{C^{1, \alpha}(\R^N)} = o_\delta(1) \; \text{for every } 0 < \alpha < 1 , \text{ as } \delta \to 0, \quad\\
        & \Theta_{\delta} = \Id \text{ in }\R^N\setminus B_{\frac{3}{2} r_2},\quad\Theta_{\delta}(B_{r_2})= \widetilde{D}_{\delta} , \quad
        \Theta_{\delta}(\partial B_{r_2})= \partial \widetilde D_\delta ,
    \end{split}
\end{align}
where $|B_{r_{2}}|=2$. The existence of such family 
of diffeomorphisms follows from Proposition \ref{pro:nearlysphericalalpha} (actually $\Theta_\delta$ 
depends on $\varphi_\delta$ there); we refer to \cite[Section 6.1]{ferreri_verzini2} for more details on 
such diffeomorphism. In particular, \eqref{eqn:PropertiesC1aDiffeoTheta} holds true with 
$\alpha=\theta$.

Let the functions $\phi_{\delta, i}$ be defined as 
\[
\phi_{\delta, i} := v_{\delta, i} \circ \Theta_{\delta} - h_i .
\]
From \eqref{eqn:TransmissionPbLimit} and \eqref{eqn:PropertiesC1aDiffeoTheta} we deduce that the functions $\phi_{\delta, i}$ solve, in $H^1\left( B_{\kappa\delta^{-1/N}} \right)$, the transmission problem
\begin{equation}\label{eqn:TransmissionPbDifference}
\begin{cases}
-\diverg\left( M^{\delta} \nabla \phi_{\delta, i} \right) = f_i^{\delta} + \diverg\left( F_i^{\delta} \right) & \text{in } B_{r_2} \bigcup \left( B_{\kappa\delta^{-1/N}} \setminus B_{r_2} \right) , \\
[\phi_{\delta,i}] = 0, \quad [M^{\delta} \nabla \phi_{\delta, i} \cdot n] = g_i^{\delta} & \text{on } \partial B_{r_2} ,
\end{cases}
\end{equation}
where we have denoted 
\begin{align*}
 & Y_{\delta} := \vert \det \left( D \Theta_{\delta} \right) \vert , \quad M^{\delta} := Y_{\delta} D\Theta_{\delta}^{-1} A^{\delta} D\Theta_{\delta}^{-T} \qquad \text{in } B_{\kappa\delta^{-1/N}} , \quad Y_{\delta, T} := \vert D\Theta_{\delta}^{-T} n \vert Y_{\delta} \qquad \text{on } \partial B_{r_2}, 
 \\
 & f_i^{\delta} := \delta^{2/N} \od(\delta) \left( \widetilde m_\delta v_{\delta, i}  J_{\delta} + \widetilde m_\delta w_{\delta} \partial_i J_{\delta}   \right) \circ \Theta_{\delta} - \IM m h_i, \quad 
 F_i^{\delta} :=Y_{\delta}D\Theta_{\delta}^{-1} G_i^{\delta} + \left( M^{\delta} - \Id \right) \nabla h_i \qquad \text{in } B_{\kappa\delta^{-1/N}} , 
 \\
 & g_i^{\delta} := (1+\beta) \left( \IM  w n_i - \delta^{2/N} \od(\delta) \left(w_{\delta} J_{\delta} \nu_i\right) \circ \Theta_{\delta} Y_{\delta, T} \right) - \left[ \left( M^{\delta} - Id \right) \nabla h_i \cdot n \right] \qquad \text{on } \partial B_{r_2} .
\end{align*}
Without loss of generality, we can also assume that
\[
\phi_{\delta,i} = 0 \text{ on } \partial B_{\kappa\delta^{-1/N}}.
\]
Indeed, for $\delta$ sufficiently small, it is sufficient to subtract from $\phi_{\delta, i}$ the function $z_{\delta, i}$ solution to
\[
\begin{cases}
    -\diverg\left( M^{\delta} \nabla z_{\delta, i} \right) = 0 & \text{in } B_{\kappa\delta^{-1/N}}, \\
    z_{\delta, i} = \phi_{\delta, i} & \text{on } \partial B_{\kappa\delta^{-1/N}},
\end{cases}
\]
and to notice that, by elliptic regularity and \cite[Remark 4.5]{MPV3},
\[
\| z_{\delta, i} \|_{C^{1, \theta}\left( \overline{B_{\kappa\delta^{-1/N}}} \right)} \to 0, \quad \text{as } \delta \to 0.
\]
We are in the position to apply the results in \cite[Theorem 1.2]{Dong2021:TransmissionProblems}, and since \cite[Lemma 5.4]{ferreri_verzini2}, Proposition \ref{prop:vecchiolavoro} and \eqref{eq:lemmaA1} imply that
\[
\| f_i^{\delta} \|_{L^{\infty}\left( B_{\kappa\delta^{-1/N}} \right)} \to 0, \quad \| F_i^{\delta} \|_{C^{0, \theta}\left( B_{\kappa\delta^{-1/N}} \right)} \to 0 \quad \text{and} \quad \| g_i^{\delta} \|_{C^{0, \theta}\left( \partial B_{r_2} \right)} \to 0 
\]
as $\delta \to 0$, \cite[Theorem 1.2]{Dong2021:TransmissionProblems} gives that, for all $i = 1, \dots, N-1$
\begin{equation}\label{eqn:C1aSeparateConvergeTangentialDerivatives}
\| \phi_{\delta, i} \|_{C^{1, \theta}\left( \overline{B_{r_2}} \right)} + \| \phi_{\delta, i} \|_{C^{1, \theta}\left( \overline{B_{\kappa\delta^{-1/N}} \setminus B_{r_2}} \right)} \to 0, \qquad \text{as } \delta\to 0.
\end{equation}
Moreover, as usual isolating $\partial_N^2 w_{\delta}$ in equation \eqref{eq:eq_in_forma_di_div} and composing it with the diffeomorphism $\Theta_{\delta}$, it can be seen using \eqref{eqn:C1aSeparateConvergeTangentialDerivatives} that 
\[
\| \partial_N \left( v_{\delta, N} \circ \Theta_{\delta}  \right) - \partial_N^2 w \|_{C^{0, \theta}\left( \overline{B_{r_2} \cap B_{\kappa\delta^{-1/N}}^+} \right)} + \| \partial_N \left( v_{\delta, N} \circ \Theta_{\delta}  \right) - \partial_N^2 w \|_{C^{0, \theta}\left( \overline{B_{\kappa\delta^{-1/N}}^+ \setminus B_{r_2}} \right)} \to 0, \quad \text{as } \delta\to 0.
\]
Hence, we have just shown that
\begin{equation}\label{eqn:C1aSeparateConvergeDerivatives}
\| \phi_{\delta, i} \|_{C^{1, \theta}\left( \overline{B_{r_2} \cap B_{\kappa\delta^{-1/N}}^+} \right)} + \| \phi_{\delta, i} \|_{C^{1, \theta}\left( \overline{B_{\kappa\delta^{-1/N}}^+ \setminus B_{r_2}} \right)} \to 0, \qquad \text{as } \delta\to 0, \qquad \forall \, i = 1, \dots, N.
\end{equation}
Now, in order to prove the proposition, it is sufficient to notice that, by \eqref{eqn:C1aSeparateConvergeDerivatives} and \eqref{eqn:PropertiesC1aDiffeoTheta} we have
\begin{align}\label{eqn:C1aTangentialConvergenceReason}
\begin{split}
    & \| \left(\nabla w_{\delta} - \left(\nabla w_{\delta} \cdot n\right) n \right) \circ \Theta_{\delta} - \left(\nabla w - \left(\nabla w \cdot n\right) n \right) \|_{C^{1, \theta} \left( \overline{\widetilde D_\delta \cap \left(B_{\kappa\delta^{-1/N}}^+ \setminus B_r \right) } \right)} +  \\
    + & \| \left(\nabla w_{\delta} - \left(\nabla w_{\delta} \cdot n\right) n \right) \circ \Theta_{\delta} - \left(\nabla w - \left(\nabla w \cdot n\right) n \right) \|_{C^{1, \theta} \left( \overline{ B_{\kappa\delta^{-1/N}}^+ \setminus \left( \widetilde D_\delta \cup B_r \right) } \right)} \to 0.
\end{split}
\end{align}
as $\delta \to 0$, but since the function $w$ is radial, in $B_{\kappa\delta^{-1/N}}^+$
\[
\nabla w - \left(\nabla w \cdot n\right) n = 0,
\]
so that \eqref{eqn:C1aTangentialConvergenceReason} can be rewritten as
\begin{align}\label{eqn:C1aTangentialConvergenceComposed}
\begin{split}
    \| \left(\nabla w_{\delta} - \left(\nabla w_{\delta} \cdot n\right) n \right) & \circ \Theta_{\delta} \|_{C^{1, \theta} \left( \overline{\widetilde D_\delta \cap \left(B_{\kappa\delta^{-1/N}}^+ \setminus B_r \right) } \right)} + \\
    & + \| \left(\nabla w_{\delta} - \left(\nabla w_{\delta} \cdot n\right) n \right) \circ \Theta_{\delta} \|_{C^{1, \theta} \left( \overline{ B_{\kappa\delta^{-1/N}}^+ \setminus \left( \widetilde D_\delta \cup B_r \right) } \right)} \to 0 .
\end{split}
\end{align}
To conclude, the claim \eqref{eqn:C1aTangentialConvergence} follows from \eqref{eqn:C1aTangentialConvergenceComposed} just rewriting
\[
\nabla w_{\delta} - \left(\nabla w_{\delta} \cdot n\right) n = \left(\nabla w_{\delta} - \left(\nabla w_{\delta} \cdot n\right) n \right) \circ \Theta_{\delta} \circ \Theta_{\delta}^{-1} 
\]
and using \eqref{eqn:PropertiesC1aDiffeoTheta} once again.
\end{proof}
As a consequence of Proposition \ref{prop:TangentDerivaticesC11To0}, we have the following corollary.
\begin{corollary}\label{crl:TangentDerivaticesC01To0}
   For any $r \in (0, r_{2}/4)$,  
    \[
    \| \nabla w_{\delta} - \left(\nabla w_{\delta} \cdot n\right) n \|_{C^{0, 1} \left( \overline{B_{\kappa\delta^{-1/N}} \setminus B_r } \right)} \to 0 , \qquad \text{as $\delta\to 0$},
    \]
    where $n=z/\vert z \vert$.
\end{corollary}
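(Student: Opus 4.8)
The plan is to obtain the global Lipschitz bound by gluing together the two one-sided $C^{1,\theta}$ estimates furnished by Proposition \ref{prop:TangentDerivaticesC11To0}, using as ``glue'' the fact that the tangential field $V_\delta := \nabla w_\delta - (\nabla w_\delta\cdot n)n$ is \emph{continuous} across the free interface $\partial\widetilde D_\delta$. This continuity is free of charge: by the corollary following Proposition \ref{prop:TransmissionPb}, $w_\delta$ is globally $C^{1,1}$ on $\overline{B_{\kappa\delta^{-1/N}}}$, so $\nabla w_\delta$, and hence $V_\delta$, has no jump across $\partial\widetilde D_\delta$; consequently, on the spherical shell $B_{\kappa\delta^{-1/N}}\setminus\overline{B_r}$ the distributional gradient of $V_\delta$ coincides with its a.e.\ pointwise gradient, with no singular part concentrated on the (negligible) interface. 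The regularity estimates for transmission problems enter only through Proposition \ref{prop:TangentDerivaticesC11To0}, which provides the smallness; the present step only has to propagate that smallness across the interface.

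First I would reduce to the half-ball. Since $w_\delta$ is the even reflection in $z_N$ of $v_\delta$, the tangential derivatives $\partial_i w_\delta$, $i=1,\dots,N-1$, are even in $z_N$, while $\partial_N w_\delta$ is odd and vanishes on $\{z_N=0\}$ (the Neumann/reflection condition in \eqref{eq:eq_in_forma_di_div}); since $n_N$ also vanishes there, the components of $V_\delta$ are even across $\{z_N=0\}$ for $i<N$ and odd with vanishing trace for $i=N$. Exactly as in the reflection argument in the corollary after Proposition \ref{prop:TransmissionPb}, this yields
\[
\|V_\delta\|_{C^{0,1}(\overline{B_{\kappa\delta^{-1/N}}\setminus B_r})}\le C\,\|V_\delta\|_{C^{0,1}(\overline{B_{\kappa\delta^{-1/N}}^+\setminus B_r})}
\]
with $C$ universal, so it suffices to bound the right-hand side.

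Next, write $\Omega_\delta:=B_{\kappa\delta^{-1/N}}^+\setminus\overline{B_r}=\Omega_1^\delta\cup\Gamma_\delta\cup\Omega_2^\delta$, with $\Omega_1^\delta:=\widetilde D_\delta\cap\Omega_\delta$, $\Omega_2^\delta:=\Omega_\delta\setminus\overline{\widetilde D_\delta}$ and $\Gamma_\delta:=\partial\widetilde D_\delta\cap\Omega_\delta$. Proposition \ref{prop:TangentDerivaticesC11To0} gives $\|V_\delta\|_{C^{1,\theta}(\overline{\Omega_1^\delta})}\to0$ and $\|V_\delta\|_{C^{1,\theta}(\overline{\Omega_2^\delta})}\to0$, hence $\|V_\delta\|_{L^\infty(\Omega_\delta)}\to0$ and $L_\delta:=\|\nabla V_\delta\|_{L^\infty(\Omega_1^\delta)}+\|\nabla V_\delta\|_{L^\infty(\Omega_2^\delta)}\to0$. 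For the Lipschitz seminorm, given $x,y\in\overline{\Omega_\delta}$ I would join them by a path $\gamma\subset\overline{\Omega_\delta}$ with $\mathrm{length}(\gamma)\le C_0|x-y|$, using that a half spherical shell is quasiconvex with a constant $C_0$ independent of $\delta$; since $V_\delta$ is Lipschitz (hence absolutely continuous along $\gamma$) on each of $\overline{\Omega_1^\delta}$ and $\overline{\Omega_2^\delta}$ and globally continuous, one has $|V_\delta(x)-V_\delta(y)|\le\int_\gamma|\nabla V_\delta|\le L_\delta\,\mathrm{length}(\gamma)\le C_0 L_\delta|x-y|$. Therefore $\|V_\delta\|_{C^{0,1}(\overline{\Omega_\delta})}\to0$, and the claim follows from the reflection bound above.

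The step I expect to require most care is making this gluing estimate uniform in $\delta$, since the interface $\Gamma_\delta$ moves with $\delta$ and is controlled only through Proposition \ref{pro:nearlysphericalalpha}. The key observation is that the final bound does not see the geometry of $\Gamma_\delta$ at all: it uses only (i) the continuity of $V_\delta$ across $\Gamma_\delta$, which removes any singular contribution of $\nabla V_\delta$ there and comes from the $C^{1,1}$-regularity of $w_\delta$; and (ii) the quasiconvexity of the ambient shell, whose constant $C_0$ is manifestly $\delta$-independent. One should be mildly careful that $\gamma$ may meet $\Gamma_\delta$ in a complicated set, but the estimate $|V_\delta(x)-V_\delta(y)|\le\int_\gamma|\nabla V_\delta|$ is legitimate regardless, because $V_\delta$ is Lipschitz on each closed piece and continuous overall, so it is absolutely continuous along $\gamma$ with derivative bounded by $L_\delta$ almost everywhere.
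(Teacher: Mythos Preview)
Your argument is correct and is precisely the natural way to derive the corollary. In the paper the result is stated without proof, simply as ``a consequence of Proposition~\ref{prop:TangentDerivaticesC11To0}''; you have supplied the expected details: glue the two one-sided $C^{1,\theta}$ estimates across $\partial\widetilde D_\delta$ using that $\nabla w_\delta$ (hence $V_\delta$) is continuous there by the global $C^{1,1}$ regularity, combine with the uniform quasiconvexity of the half-shell, and then extend to the whole shell by the parity/reflection argument.
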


Finally, we can conclude this section with the following improvement of  Proposition \ref{pro:nearlysphericalalpha}.
\begin{proposition}\label{pro:nearlyspherical}
For $\delta$ sufficiently small $\widetilde{D}_{\delta}$ is nearly spherical of class $C^{1,1}$,  parametrized by $\varphi_{\delta}$ (see Definition \ref{def:nearlysph}). In addition
\[
\|\varphi_{\delta}\|_{C^{1,1}(\mathbb{S}^{N-1})}\to 0, \qquad \text{as $\delta\to 0$}.
\]
\end{proposition}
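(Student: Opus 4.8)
The plan is to upgrade Proposition~\ref{pro:nearlysphericalalpha} by exploiting the $C^{1,1}$-type information on $w_\delta$ furnished by the transmission-problem analysis. Proposition~\ref{pro:nearlysphericalalpha} already provides that, for $\delta$ small, $\widetilde D_\delta$ is nearly spherical of class $C^{1,\alpha}$, centered at $0=\bari(\widetilde D_\delta)$ and parametrized by $\varphi_\delta$, with $\|\varphi_\delta\|_{C^{1,\alpha}(\mathbb{S}^{N-1})}\to0$; since on $\mathbb{S}^{N-1}$ one has $\|\varphi_\delta\|_{C^{1,1}}\simeq\|\varphi_\delta\|_{C^0}+\|\nabla\varphi_\delta\|_{C^0}+[\nabla\varphi_\delta]_{C^{0,1}}$, the statement reduces to showing that $\nabla\varphi_\delta$ is Lipschitz on $\mathbb{S}^{N-1}$ with $[\nabla\varphi_\delta]_{C^{0,1}(\mathbb{S}^{N-1})}\to0$. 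The starting point is the identity for $\nabla\varphi_\delta$ derived in the proof of Proposition~\ref{pro:nearlysphericalalpha} which, writing $x_\delta(\theta):=(r_2+\varphi_\delta(\theta))\theta$ for the polar parametrization of $\partial\widetilde D_\delta$ and $n=z/|z|$ (so $n=\theta$ at $z=x_\delta(\theta)$), reads
\[
\nabla\varphi_\delta(\theta)=-\frac{r_2+\varphi_\delta(\theta)}{\partial_r w_\delta\big(x_\delta(\theta)\big)}\,\nabla_T w_\delta\big(x_\delta(\theta)\big),\qquad \partial_r w_\delta=\nabla w_\delta\cdot n,\quad \nabla_T w_\delta=\nabla w_\delta-(\nabla w_\delta\cdot n)n .
\]

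First I would control the harmless factors. By Proposition~\ref{pro:nearlysphericalalpha} we have $\|\varphi_\delta\|_{C^0},\|\nabla\varphi_\delta\|_{C^0}\to0$, so $x_\delta$ is Lipschitz with $\sup_\delta[x_\delta]_{C^{0,1}(\mathbb{S}^{N-1})}<\infty$, and, by \eqref{eq:inclusion}, $\partial\widetilde D_\delta=x_\delta(\mathbb{S}^{N-1})\subset B_{\overline r}\setminus B_{\underline r}$ with $\underline r,\overline r\to r_2$. The Corollary to Proposition~\ref{prop:TransmissionPb} gives $\|w_\delta\|_{C^{1,1}(\overline{B_{\kappa\delta^{-1/N}}})}\le C$ uniformly; together with the $C^1_{\loc}$-convergence $w_\delta\to w$ (Proposition~\ref{prop:vecchiolavoro}) and the fact that $w$ is radial, radially decreasing, with $w'(r_2)<0$, this shows that $\theta\mapsto\partial_r w_\delta(x_\delta(\theta))=\nabla w_\delta(x_\delta(\theta))\cdot\theta$ converges uniformly to $w'(r_2)<0$ (note that $w_\delta\in C^{1,1}$ guarantees $\nabla w_\delta$ is well defined across $\partial\widetilde D_\delta$) and is bounded in $C^{0,1}(\mathbb{S}^{N-1})$ uniformly in $\delta$, being a uniformly Lipschitz gradient composed with the uniformly Lipschitz map $x_\delta$. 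Staying bounded away from $0$, its reciprocal is also uniformly bounded in $C^{0,1}(\mathbb{S}^{N-1})$, and the same holds for $\theta\mapsto r_2+\varphi_\delta(\theta)$. Hence the prefactor $(r_2+\varphi_\delta)/\big(\partial_r w_\delta\circ x_\delta\big)$ is bounded in $C^{0,1}(\mathbb{S}^{N-1})$, uniformly in $\delta$.

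The crucial step is to show that the remaining factor $\nabla_T w_\delta\circ x_\delta$ vanishes in $C^{0,1}(\mathbb{S}^{N-1})$. For this I would fix $r\in(0,r_2/4)$, observe that for $\delta$ small $\partial\widetilde D_\delta\subset B_{\kappa\delta^{-1/N}}\setminus B_r$, and invoke Corollary~\ref{crl:TangentDerivaticesC01To0}, which gives $\|\nabla_T w_\delta\|_{C^{0,1}(\overline{B_{\kappa\delta^{-1/N}}\setminus B_r})}\to0$; composing with the uniformly Lipschitz map $x_\delta$ (using $\|f\circ g\|_{C^{0,1}}\le\|f\|_{C^{0,1}}(1+[g]_{C^{0,1}})$) transfers this smallness to $\mathbb{S}^{N-1}$. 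This is the main obstacle of the argument: the decay of $\nabla_T w_\delta$ only at the Hölder level would give no more than $\|\varphi_\delta\|_{C^{1,\alpha}}\to0$, so one genuinely needs the $C^{1,1}$/$C^{0,1}$ transmission estimates of Propositions~\ref{prop:TransmissionPb} and~\ref{prop:TangentDerivaticesC11To0}, and one must check that the moving interface $\partial\widetilde D_\delta$ stays inside the region where they are uniform. Once this is in place, since $C^{0,1}(\mathbb{S}^{N-1})$ is a Banach algebra, the displayed formula exhibits $\nabla\varphi_\delta$ as a product of a $C^{0,1}$-bounded factor and a factor vanishing in $C^{0,1}$, whence $\|\nabla\varphi_\delta\|_{C^{0,1}(\mathbb{S}^{N-1})}\to0$. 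In particular $\nabla\varphi_\delta$ is Lipschitz, so $\varphi_\delta\in C^{1,1}(\mathbb{S}^{N-1})$, and combining with Proposition~\ref{pro:nearlysphericalalpha} we conclude $\|\varphi_\delta\|_{C^{1,1}(\mathbb{S}^{N-1})}\to0$.
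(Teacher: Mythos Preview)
Your proposal is correct and follows essentially the same route as the paper: the paper's own proof is a one-line reference (``arguing as in \cite[Proposition 5.10]{ferreri_verzini2}, in view of Proposition~\ref{prop:TransmissionPb} and Corollary~\ref{crl:TangentDerivaticesC01To0}''), and your argument is precisely the natural way to fill this in---use the implicit-function formula for $\nabla\varphi_\delta$ from Proposition~\ref{pro:nearlysphericalalpha}, control the radial factor uniformly in $C^{0,1}$ via the global $C^{1,1}$ bound on $w_\delta$, and let the tangential factor vanish in $C^{0,1}$ by Corollary~\ref{crl:TangentDerivaticesC01To0}.
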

\begin{proof}
In view of  Proposition \ref{prop:TransmissionPb}  and Corollary  \ref{crl:TangentDerivaticesC01To0}, 
the result can be proved by arguing as in \cite[Proposition 5.10]{ferreri_verzini2}.
\end{proof}

\begin{remark}\label{rmk:piùregolare}
The above $C^{1,1}$ decay is sharp, on $\sphere^{N-1}$, because of the reflection of the $N$-th 
derivative. Actually, since $\partial\Omega \in C^{3,\theta}$, one may expect further regularity of the 
free boundary. As a matter of fact, the estimates in $C^{2,\theta}$ of $w_\delta$ hold up to the free 
boundary, from inside and outside; therefore a continuation argument should allow to improve the 
above $C^{1,1}(\sphere^{N-1})$ decay to a $C^{2,\theta}(\sphere^{N-1}_+)$ one, but we do not pursue 
this argument here.
\end{remark}

\section{Quantitative estimates}\label{sec:quantitative}

In this section we conclude the proof of Theorem \ref{thm:nearlyspher1} by obtaining the 
quantitative estimates for the nearly spherical parametrizations of the optimal sets 
$\widetilde{D}_{\delta}$ (in the blow-up scale, as introduced in Sections~\ref{sec:boundbelow}, 
\ref{sec:polar}) and $D_\delta$ (in the original reference). 
These estimates are based on Proposition \ref{pro:nearlyspherical} and on the application 
of the following result.

\begin{theorem}[{\cite[Theorem 1.4]{ferreri_verzini2}}]\label{thm:teoFV}
There exist positive constants $C,\eps$ such that, for all $C^{1,1}$ nearly spherical sets 
$A\subset\R^N$, centered at the origin and parametrized by $\varphi$ satisfying
\begin{enumerate}
\item $\bari(A)=0$,
\item $|A|=2$,
\item $\|\varphi\|_{C^{1,1}(\mathbb{S}^{N-1})}\leq \eps$,
\end{enumerate}
it holds 
\[
\lambda(A,\R^{N})-\lambda(B_{r_2},\R^{N})\geq C \|\varphi\|^2_{L^2(\mathbb{S}^{N-1})}.
\]
\end{theorem}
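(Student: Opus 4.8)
The plan is to carry out the Fuglede-type second-order analysis underlying \cite[Theorem 1.4]{ferreri_verzini2} (in the spirit of \cite{fuglede}); we sketch its main steps. Recall that $\IM=\lambda(B_{r_2},\R^N)$ (see \eqref{eq:lambdazero}) is achieved by the radial eigenfunction $w$ and weight $m$ of \eqref{eq:lim_prob}. \emph{Step 1: reduction to the fixed ball.} Given a nearly spherical $A$ parametrized by $\varphi$ with $\|\varphi\|_{C^{1,1}}\le\eps$, a standard construction produces a global diffeomorphism $\Phi_\varphi\colon\R^N\to\R^N$, equal to the identity outside a fixed neighbourhood of $\partial B_{r_2}$, with $\Phi_\varphi(B_{r_2})=A$ (hence $\Phi_\varphi(\partial B_{r_2})=\partial A$ and $m\circ\Phi_\varphi^{-1}=m_A$) and $\|\Phi_\varphi-\Id\|_{C^{1,1}}\le C\|\varphi\|_{C^{1,1}}$. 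Pulling back through $\Phi_{t\varphi}$, the eigenvalue $f(t):=\lambda(\{(r_2+t\varphi(\theta))\theta\},\R^N)$ equals the principal eigenvalue on the \emph{fixed} set $B_{r_2}$ of $u\mapsto-\diverg(a_{t\varphi}\nabla u)$ against the weight $j_{t\varphi}\,m$, whose interface stays at $\partial B_{r_2}$, with coefficients $a_{t\varphi},j_{t\varphi}$ depending polynomially on $t$, $a_0=\Id$, $j_0=1$. Since $\IM$ is simple, analytic perturbation theory gives that $f$ is smooth on $[0,1]$ for $\eps$ small, so Taylor's formula with Lagrange remainder gives $f(1)=f(0)+f'(0)+\tfrac12 f''(\xi)$ for some $\xi\in(0,1)$.

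\emph{Step 2: the first variation vanishes.} As $w$ is radial it is constant on $\partial B_{r_2}$, so $B_{r_2}$ is a critical shape for $\lambda(\cdot,\R^N)$ under the volume constraint: the first shape derivative is, to leading order, a multiple of $\int_{\mathbb{S}^{N-1}}\varphi$, while the constraint $|A|=|B_{r_2}|$ forces $\int_{\mathbb{S}^{N-1}}\varphi=-\tfrac{N-1}{2r_2}\|\varphi\|_{L^2(\mathbb{S}^{N-1})}^2+O(\|\varphi\|_{C^0}^3)$. Hence $f'(0)$ is (at most) an $O(\|\varphi\|_{L^2(\mathbb{S}^{N-1})}^2)$ term which, after the usual bookkeeping of the volume constraint (equivalently, working with volume-preserving deformations and the associated Lagrange multiplier), is absorbed; the problem reduces to proving that $Q[\varphi]:=f''(0)$ satisfies $Q[\varphi]\ge c\,\|\varphi\|_{L^2(\mathbb{S}^{N-1})}^2$ on the admissible directions, together with the fact that $f''$ depends continuously on $\xi\in[0,1]$ with modulus of continuity controlled by $\|\varphi\|_{C^{1,1}}$, so that $f''(\xi)$ stays close to $Q[\varphi]$ as $\eps\to0$.

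\emph{Step 3: diagonalization and the spectral gap.} The second shape derivative is a quadratic form
\[
Q[\varphi]=\int_{\mathbb{S}^{N-1}}\bigl(a\,|\nabla_{\mathbb{S}^{N-1}}\varphi|^2+b\,\varphi^2\bigr)\,d\Hcal^{N-1}+(\text{lower-order terms in }\varphi)=\sum_{k\ge 0}\mu_k\,\|\varphi_k\|_{L^2(\mathbb{S}^{N-1})}^2,
\]
where $\varphi_k$ is the projection of $\varphi$ on the degree-$k$ spherical harmonics and each $\mu_k$ is obtained by solving a one-dimensional radial linearized transmission problem with forcing on $\partial B_{r_2}$ — the radial reduction, in the mode $k$, of \eqref{eqn:TransmissionPbLimit}. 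The two crucial facts are: (i) $\mu_1=0$, reflecting the translation invariance of $\lambda(\cdot,\R^N)$, the degree-$1$ shape derivatives being, up to normalization, the functions $h_i=\partial_i w$ of \eqref{eqn:TransmissionPbLimit}; and (ii) $\mu_k\ge c_0>0$ for every $k\ge 2$, with $\mu_k\to+\infty$. Establishing (ii) — in particular the \emph{uniform} lower bound, i.e.\ ruling out further zero modes such as an ellipsoidal one — is the technical core: it uses the explicit Bessel representation of $w$, its radial monotonicity and the jump $[\partial_n w]=-\IM(1+\beta)w$ to control the signs of the solutions of the mode-$k$ ODEs. That $\mu_k\ge 0$ for $k\ge2$ is already known since $B_{r_2}$ is the global minimizer, but the strict, uniform positivity is genuinely a second-order fact.

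\emph{Step 4: conclusion.} The barycenter constraint $\bari(A)=0$ gives $\|\varphi_1\|_{L^2(\mathbb{S}^{N-1})}\le C\|\varphi\|_{L^2(\mathbb{S}^{N-1})}^2$, and the volume constraint gives $\|\varphi_0\|_{L^2(\mathbb{S}^{N-1})}\le C\|\varphi\|_{L^2(\mathbb{S}^{N-1})}^2$, hence $\|\varphi\|_{L^2(\mathbb{S}^{N-1})}^2\le 2\sum_{k\ge 2}\|\varphi_k\|_{L^2(\mathbb{S}^{N-1})}^2$ once $\|\varphi\|_{L^2}$ is small. Therefore, by Step 3,
\[
Q[\varphi]=\sum_{k\ne 1}\mu_k\|\varphi_k\|_{L^2(\mathbb{S}^{N-1})}^2\ge c_0\sum_{k\ge 2}\|\varphi_k\|_{L^2(\mathbb{S}^{N-1})}^2-C\|\varphi\|_{L^2(\mathbb{S}^{N-1})}^4\ge\frac{c_0}{2}\,\|\varphi\|_{L^2(\mathbb{S}^{N-1})}^2 .
\]
Since $\mu_k\to+\infty$, $Q$ is coercive, modulo the low modes, in a Sobolev norm strong enough to absorb the error $f''(\xi)-Q[\varphi]$ for $\eps$ small, so $f''(\xi)\ge\tfrac{c_0}{4}\|\varphi\|_{L^2(\mathbb{S}^{N-1})}^2$; plugging into $f(1)-f(0)=f'(0)+\tfrac12 f''(\xi)$ and recalling Step 2 yields $\lambda(A,\R^N)-\lambda(B_{r_2},\R^N)\ge C\|\varphi\|_{L^2(\mathbb{S}^{N-1})}^2$. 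The main obstacle is Step 3(ii): the uniform spectral gap $\inf_{k\ge 2}\mu_k>0$ for the family of mode-$k$ linearized transmission problems, whose proof forces the precise analysis of the limit eigenfunction $w$ and is what makes \cite[Theorem 1.4]{ferreri_verzini2} non-trivial.
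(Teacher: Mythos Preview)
The paper does not prove this theorem: it is quoted verbatim from \cite[Theorem 1.4]{ferreri_verzini2} and used as a black box in Section~\ref{sec:quantitative} (see the sentence immediately after the statement: ``To apply this result to $\widetilde{D}_{\delta}$\dots''). There is therefore no proof in the present paper to compare your proposal against.

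That said, your sketch is a reasonable outline of the Fuglede-type strategy that one expects to underlie \cite[Theorem 1.4]{ferreri_verzini2}: pull back to the fixed ball via a $C^{1,1}$ diffeomorphism, expand the eigenvalue to second order in $\varphi$, diagonalize the second variation on spherical harmonics, kill the $k=0$ and $k=1$ modes using the volume and barycenter constraints, and establish a uniform spectral gap $\inf_{k\ge2}\mu_k>0$. You correctly identify this last point as the non-trivial core. Two caveats: (i) your handling of the remainder, writing $f(1)-f(0)=f'(0)+\tfrac12 f''(\xi)$ and then claiming $f''(\xi)$ is close to $Q[\varphi]$ ``with modulus controlled by $\|\varphi\|_{C^{1,1}}$'', glosses over the fact that the coefficients of the pulled-back operator are only Lipschitz, so the smooth dependence of $f$ and the control of $f''(\xi)-f''(0)$ need care; (ii) the assertion that $\mu_k\to+\infty$ gives coercivity in a Sobolev norm ``strong enough to absorb the error'' is plausible but is exactly where the $C^{1,1}$ hypothesis on $\varphi$ is actually used, and this step deserves more than a sentence. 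If your task were to reproduce the proof of \cite[Theorem 1.4]{ferreri_verzini2}, these are the points to flesh out; for the purposes of the present paper, however, the theorem is simply cited.
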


To apply this result to $\widetilde{D}_{\delta}$ we notice that its first assumption is verified 
by centering the blow-up analysis at $Q_\delta:=\Phi_\delta(\bari(\widetilde{D}_{\delta}))$, 
rather than $P_\delta$ (recall that we are working in this setting since \eqref{eq:cambiotrasla}, thanks to Lemma \ref{lem:bari}). 
On the other hand, $|\widetilde{D}_{\delta}|$ is equal to $2$ only in the limit, thus we need to take into account an error term.

\begin{lemma}\label{lem:quant1}
There exists $C>0$ such that, for $\delta$ sufficiently small,
\[
\left(\frac{|\widetilde D_\delta|}{2}\right)^{2/N}\lambda(\widetilde D_\delta,\R^N)-\IM
\geq C \|\vfi_\delta\|^2_{L^2(\mathbb{S}^{N-1})} + o(\delta^{1/N}).
\]
\end{lemma}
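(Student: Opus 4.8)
The plan is to deduce the estimate from Theorem~\ref{thm:teoFV}, after rescaling $\widetilde D_\delta$ so that it has measure exactly $2$. First I would set
\[
\mu_\delta:=\left(\frac{2}{|\widetilde D_\delta|}\right)^{1/N},\qquad A_\delta:=\mu_\delta\,\widetilde D_\delta,
\]
so that $|A_\delta|=2$; by the scaling law $\lambda(\mu A,\R^N)=\mu^{-2}\lambda(A,\R^N)$ (equivalently, the homogeneity of the limit problem recalled just before \eqref{eq:IMwdelta}) together with \eqref{eq:lambdazero}, one obtains
\[
\left(\frac{|\widetilde D_\delta|}{2}\right)^{2/N}\lambda(\widetilde D_\delta,\R^N)-\IM=\mu_\delta^{-2}\lambda(\widetilde D_\delta,\R^N)-\IM=\lambda(A_\delta,\R^N)-\lambda(B_{r_2},\R^N).
\]
Thus the left-hand side of the claimed inequality is exactly the eigenvalue deficit of the genuine competitor $A_\delta$.

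Next I would check that $A_\delta$ is admissible for Theorem~\ref{thm:teoFV}. In the reference frame fixed by \eqref{eq:cambiotrasla}, $\widetilde D_\delta$ is centered at its barycenter $0$ and, by Proposition~\ref{pro:nearlyspherical}, is parametrized by $\varphi_\delta=\vfi_\delta\in C^{1,1}(\mathbb{S}^{N-1})$ with $\|\varphi_\delta\|_{C^{1,1}(\mathbb{S}^{N-1})}\to0$. Hence $A_\delta$ is centered at $\bari(A_\delta)=\mu_\delta\bari(\widetilde D_\delta)=0$ and parametrized by
\[
\psi_\delta:=(\mu_\delta-1)r_2+\mu_\delta\varphi_\delta .
\]
Since $|\widetilde D_\delta|\to2$ we have $\mu_\delta\to1$, so that $\|\psi_\delta\|_{C^{1,1}(\mathbb{S}^{N-1})}\le|\mu_\delta-1|\,r_2+\mu_\delta\|\varphi_\delta\|_{C^{1,1}(\mathbb{S}^{N-1})}\to0$; in particular, for $\delta$ small, assumptions (1)--(3) of Theorem~\ref{thm:teoFV} hold, and it gives $\lambda(A_\delta,\R^N)-\IM\ge C\|\psi_\delta\|_{L^2(\mathbb{S}^{N-1})}^2$.

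It remains to replace $\psi_\delta$ by $\vfi_\delta$ up to an $o(\delta^{1/N})$ error. Applying $\|u+v\|_{L^2}^2\ge\tfrac12\|u\|_{L^2}^2-\|v\|_{L^2}^2$ with $u=\mu_\delta\varphi_\delta$ and $v=(\mu_\delta-1)r_2$ yields
\[
\|\psi_\delta\|_{L^2(\mathbb{S}^{N-1})}^2\ge\tfrac12\,\mu_\delta^2\,\|\vfi_\delta\|_{L^2(\mathbb{S}^{N-1})}^2-(\mu_\delta-1)^2 r_2^2\,\Hcal^{N-1}(\mathbb{S}^{N-1}).
\]
Here is the only point requiring genuine care: the measure defect must contribute only at order $o(\delta^{1/N})$. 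This follows because the second-order expansion \eqref{eq:misuraDdelta1} (equivalently \eqref{eq:misuraDdeltatilde}), together with the boundedness of $\alpha_\delta=(N-1)H_{P_\delta}$ on the compact $\partial\Omega$, gives $\mu_\delta-1=O(\delta^{1/N})$, hence $(\mu_\delta-1)^2=O(\delta^{2/N})=o(\delta^{1/N})$, while $\mu_\delta^2\ge\tfrac12$ for $\delta$ small. Chaining the three displayed inequalities then produces the assertion with a new constant $C>0$. I expect this last step to be the main (minor) obstacle: using merely $\mu_\delta\to1$ coming from the inclusion \eqref{eq:inclusion} would give only an $o(1)$ remainder, so the finer expansion of $|\widetilde D_\delta|$ is genuinely needed.
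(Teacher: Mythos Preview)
Your proposal is correct and follows essentially the same route as the paper: you rescale $\widetilde D_\delta$ to have measure $2$, apply Theorem~\ref{thm:teoFV} to the rescaled set (whose parametrization is $(\mu_\delta-1)r_2+\mu_\delta\vfi_\delta$, the paper's $\sigma_\delta$), and then use the elementary inequality $\|u+v\|_{L^2}^2\ge\tfrac12\|u\|_{L^2}^2-\|v\|_{L^2}^2$ together with the expansion of $|\widetilde D_\delta|$ from Lemma~\ref{le:ImD} to absorb the measure defect as $o(\delta^{1/N})$. Your observation that the mere convergence $\mu_\delta\to1$ would not suffice, and that the quantitative expansion \eqref{eq:misuraDdelta1} is genuinely needed, is exactly the point the paper relies on.
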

\begin{proof}
First, recall that $\widetilde D_\delta$ is $C^{1,1}$ nearly spherical, with parametrization 
$\varphi_\delta$ satisfying Proposition \ref{pro:nearlyspherical}. We infer that the set 
\[
A_\delta=\frac{2^{1/N}\widetilde{D}_{\delta}}{|\widetilde{D}_{\delta}|^{1/N}}
\] 
is nearly spherical, too:
indeed, for every $\eta \in \partial A_\delta$ there exists $z\in \partial\widetilde{D}_{\delta}$ such that
$\eta=2^{1/N}z|\widetilde{D}_{\delta}|^{-1/N}$, so that 
\begin{equation}\label{eq:Anearly}
\partial A_\delta=\left\{\eta=\left(r_2+\sigma_{\delta}(\theta)\right)\theta\right\},\quad 
\text{with }  
\sigma_{\delta}=\frac{2^{1/N}r_{2}}{|\widetilde{D}_{\delta}|^{1/N}} - r_2+\frac{2^{1/N}}{|\widetilde{D}_{\delta}|^{1/N}}\vfi_{\delta}.
\end{equation}
We are in position to apply Theorem \ref{thm:teoFV} to $A_\delta$, obtaining 
\[
\left(\frac{|\widetilde D_\delta|}{2}\right)^{2/N}\lambda(\widetilde D_\delta,\R^N)-\IM\geq C \|\sigma_\delta\|^2_{L^2}.
\]
In turn, using Lemma \ref{le:ImD},
\[
\begin{split}
\|\sigma_\delta\|^2_{L^2} &= \left\|\frac{2^{1/N}r_{2}}{|\widetilde{D}_{\delta}|^{1/N}} - r_2+\frac{2^{1/N}}{|\widetilde{D}_{\delta}|^{1/N}}\vfi_{\delta} \right\|^2_{L^2}
\ge \frac12\left(\frac{2}{|\widetilde{D}_{\delta}|}\right)^{2/N}\left\|\vfi_{\delta}\right\|^2_{L^2}
-\left\|\left(\frac{2}{|\widetilde{D}_{\delta}|}\right)^{1/N} -  1 \right\|^2_{L^2}r_2^2\\
&\ge C(1-\delta^{1/N})\left\|\vfi_{\delta}\right\|^2_{L^2} + o(\delta^{1/N}) = 
C\left\|\vfi_{\delta}\right\|^2_{L^2} + o(\delta^{1/N}),
\end{split}
\]
where we used also Proposition \ref{pro:nearlyspherical}, and the claim follows.
\end{proof}
\begin{proposition}
As $\delta\to 0$,
\begin{equation}\label{eq:quantphi}
\|\varphi_\delta\|^2_{L^2(\mathbb{S}^{N-1})}=o(\delta^{1/N}).
\end{equation}
\end{proposition}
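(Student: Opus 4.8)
The plan is to close the argument by a sandwich estimate, pitting the sharp upper bound for $\od(\delta)$ provided by Theorem \ref{thm:sviluppoesatto} against a quantitative lower bound obtained by feeding the stability inequality of Lemma \ref{lem:quant1} into the refined lower bound of Remark \ref{rmk:scorciatoia}. The point is that the curvature-dependent first-order term $\Gamma\widehat H\,\IM\,\delta^{1/N}$ appears identically on both sides, hence cancels, and the only room left for the nonnegative quantity $\|\varphi_\delta\|^2_{L^2(\mathbb{S}^{N-1})}$ is inside the remainder $o(\delta^{1/N})$.

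Concretely, I would first recall from Remark \ref{rmk:scorciatoia} that
\[
\delta^{2/N}\od(\delta)\geq
\left(\frac{|\widetilde D_\delta|}{2}\right)^{2/N}\lambda(\widetilde D_\delta,\R^N)
-\Gamma\widehat H\,\IM\,\delta^{1/N}+o(\delta^{1/N}),
\]
and then substitute the estimate of Lemma \ref{lem:quant1}, namely $\left(\frac{|\widetilde D_\delta|}{2}\right)^{2/N}\lambda(\widetilde D_\delta,\R^N)\geq \IM + C\|\varphi_\delta\|^2_{L^2(\mathbb{S}^{N-1})}+o(\delta^{1/N})$, to obtain
\[
\delta^{2/N}\od(\delta)\geq \IM + C\|\varphi_\delta\|^2_{L^2(\mathbb{S}^{N-1})} - \Gamma\widehat H\,\IM\,\delta^{1/N}+o(\delta^{1/N}).
\]
On the other hand, the second item of Theorem \ref{thm:sviluppoesatto} reads $\delta^{2/N}\od(\delta) = \IM - \Gamma\widehat H\,\IM\,\delta^{1/N}+o(\delta^{1/N})$. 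Comparing the two, the terms $\IM$ and $\Gamma\widehat H\,\IM\,\delta^{1/N}$ cancel, leaving $C\|\varphi_\delta\|^2_{L^2(\mathbb{S}^{N-1})}\leq o(\delta^{1/N})$; since $C>0$ is a fixed constant and $\|\varphi_\delta\|_{L^2}\geq 0$, this is exactly \eqref{eq:quantphi}.

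I do not expect a genuine obstacle here, since the statement is essentially a corollary of the machinery already assembled; the only delicate point is the bookkeeping of the error terms. In particular one must check that the $o(\delta^{1/N})$ in Remark \ref{rmk:scorciatoia} (which absorbs the discrepancy $|\widetilde D_\delta|\neq 2$ through Lemma \ref{le:ImD}) and the $o(\delta^{1/N})$ in Lemma \ref{lem:quant1} do not secretly contain a term of order $\delta^{1/N}$ with a definite sign that could compete with $\|\varphi_\delta\|^2_{L^2}$; tracing their derivations confirms this is not the case, and all remainders are genuinely $o(\delta^{1/N})$.

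Finally, I would remark that \eqref{eq:quantphi} completes the proof of Theorem \ref{thm:nearlyspher1}: the nearly spherical $C^{1,1}$ representation and $\|\rho_\delta\|_{C^{1,1}}\to0$ are Proposition \ref{pro:nearlyspherical}; the bound $\|\rho_\delta\|_{L^2}=o(\delta^{1/2N})$ follows from \eqref{eq:quantphi} read through the $C^{2,\theta}$ diffeomorphism $\Phi_\delta$ (close to the identity), which transports the parametrization of $\widetilde D_\delta$ centered at $\bari(\widetilde D_\delta)$ to that of $D_\delta$ centered at $Q_\delta=\Phi_\delta(\bari(\widetilde D_\delta))$; and the smallness $\delta^{-1/N}|P_\delta-Q_\delta|\to0$ is Lemma \ref{lem:bari}. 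The announced $C^{1,\alpha}$ decay rate is then a Gagliardo–Nirenberg interpolation between \eqref{eq:quantphi} and the uniform $C^{1,1}$ bound.
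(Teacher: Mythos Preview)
Your proof is correct and follows essentially the same approach as the paper: combine the lower bound from Remark \ref{rmk:scorciatoia} with Lemma \ref{lem:quant1} to get $\delta^{2/N}\od(\delta)\ge \IM + C\|\varphi_\delta\|^2_{L^2} - \Gamma\widehat H\,\IM\,\delta^{1/N}+o(\delta^{1/N})$, then compare with the upper bound from Theorem \ref{thm:sviluppoesatto} so that the curvature term cancels and only $o(\delta^{1/N})$ remains to control $C\|\varphi_\delta\|^2_{L^2}$. The paper presents this as a single chain of equalities and inequalities, but the logic is identical.
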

\begin{proof}
We apply the second conclusion of  Theorem~\ref{thm:sviluppoesatto}, Remark \ref{rmk:scorciatoia} and 
Lemma \ref{lem:quant1} to obtain
\begin{equation}\label{eq:stima1}
\begin{split}
-\IM\Gamma \widehat H \delta^{1/N}+o(\delta^{1/N})
&= \delta^{2/N}\od(\delta)-\IM \\
&= \delta^{2/N}\od(\delta) - \left(\frac{|\widetilde D_\delta|}{2}\right)^{2/N}\lambda(\widetilde D_\delta,\R^N) + \left(\frac{|\widetilde D_\delta|}{2}\right)^{2/N}\lambda(\widetilde D_\delta,\R^N) -\IM
\\
&\ge  -\IM\Gamma\widehat H\delta^{1/N}+ C \|\vfi_\delta\|^2_{L^2} +o(\delta^{1/N}),
\end{split}
\end{equation}
implying the claim.
\end{proof}

Finally, we bring back the quantitative information~\eqref{eq:quantphi} to $D_\delta$.
\begin{proof}[Proof of Theorem \ref{thm:nearlyspher1}]
By now we have obtained 
 \[
\widetilde D_\delta=\left\{z \in B_{k\delta^{-1/N}} : |z|<r_2+\varphi_\delta\left(\frac{z}{|z|}\right)\right\},
\]
so that
\begin{equation}\label{eq:Dtildedelta+}
\begin{split}
\widetilde D_\delta^+&:=\left\{z\in B^{+}_{k\delta^{-1/N}} : |z|< r_2+\varphi_\delta
\left(\frac{z}{|z|} \right)\right\} = \left\{z\in B^{+}_{2r_2} : |z|< r_2+\varphi_\delta
\left(\frac{z}{|z|} \right)\right\}
\end{split}
\end{equation}
for $\delta$ small enough.

On the other hand, recalling \eqref{eq:Dtilde},
\[
\widetilde D^+_\delta=\left\{z\in B^{+}_{2r_2}  : z=\tfrac{\Psi_\delta(x)}{\delta^{1/N}},
\text{ for some $x\in D_\delta$}\right\}.
\]
Then $x\in\partial D_\delta$ if and only if $z \in \partial\widetilde D^+_\delta$, and the theorem 
will follow by a change of variable.

Thus, in the following, we consider a generic $z\in B_{2r_2}^+$ and $x =\Phi_\delta(\delta^{1/N}z)$, 
so that $|x|=O(\delta^{1/N})$ as $\delta\to0$. We obtain
\begin{equation}\label{eq:rdelta}
x =\Phi_\delta(\delta^{1/N}z) = \delta^{1/N}z + \delta^{2/N}R_\delta(z),
\end{equation}
where the reminder $R_\delta$ is uniformly bounded in $C^{1,1}$: notice that this only requires
$\Phi_\delta\in C^{1,1}$ uniformly in $\delta$, i.e. $\partial\Omega\in C^{2,1}$ 
(recall Remark \ref{rmk:unoinmeno}). Indeed, we have
\[
\begin{split}
\delta^{2/N}R_\delta(z) &= \Phi_\delta(\delta^{1/N}z) - \delta^{1/N}z = 
\Phi_\delta(\delta^{1/N}z) - \Phi_\delta(0) - D\Phi_\delta(0)\delta^{1/N}z,\\
\delta^{1/N}DR_\delta(z) &= D\Phi_\delta(\delta^{1/N}z)- D\Phi_\delta(0),
\end{split}
\]
which yield
\[
\begin{split}
\delta^{2/N}|R_\delta(z)| &= \left|\int_0^1 \left[\frac{d}{dt} \Phi_\delta(t\delta^{1/N}z) - D\Phi_\delta(0)\delta^{1/N}z\right]\,dt\right| \\
&\le \delta^{1/N}|z|\int_0^1 \left|D\Phi_\delta(t\delta^{1/N}z) - D\Phi_\delta(0)\right|\,dt\\
&\le \delta^{2/N}|z|^2 \cdot \frac12 \|D\Phi_\delta\|_{C^{0,1}}\le \delta^{2/N} \cdot 2 \|D\Phi_\delta\|_{C^{0,1}}r_2^2
\end{split}
\]
and
\[
\delta^{1/N}\left|DR_\delta(z_1)-DR_\delta(z_2)\right| = 
\left|D\Phi_\delta(\delta^{1/N}z_1) - D\Phi_\delta(\delta^{1/N}z_2)\right| 
\le \delta^{1/N}\|D\Phi_\delta\|_{C^{0,1}}|z_1-z_2|.
\]

From \eqref{eq:rdelta} we infer
\begin{equation}\label{eq:modconres}
|x|^2 = \delta^{2/N}|z|^2 + \delta^{\frac3N}\tilde R_\delta(z),
\end{equation}
where $\tilde R_\delta$ is  uniformly in $C^{1,1}$, too.

Let us introduce the polar coordinates 
\[
\rho=|x|,\ \vartheta=\frac{x}{|x|},\qquad r=|z|,\ \xi=\frac{z}{|z|}.
\]
Taking $x \in \partial D_\delta \cap \Omega$, and the corresponding 
$z=r\xi\in \partial \widetilde D^+_\delta$, then $r = r_2+\varphi_\delta(\xi)$. 
By the properties of $\Phi_\delta$, $\Psi_\delta$, $\varphi_\delta$ we have that the map
\[
\sphere^{N-1}\ni \xi \mapsto \frac{\Phi_\delta(\delta^{1/N}(r_2+\varphi_\delta(\xi))\xi)}{|\Phi_\delta(\delta^{1/N}(r_2+\varphi_\delta(\xi))\xi))|} 
= \vartheta_\delta(\xi) \in \sphere^{N-1}
\]
is uniformly bounded in $C^{1,1}$, with inverse $\vartheta \mapsto \xi=\xi_\delta(\vartheta)$, for $\delta$ 
small enough.  Substituting 
into \eqref{eq:modconres} we infer 
that \eqref{eq:defrho} holds true with $\rho_\delta=\rho_\delta(\vartheta)$ implicitly defined by 
\[
2r_2\rho_\delta+\rho^{2}_{\delta}=2r_2\varphi_\delta(\xi_\delta(\vartheta))+
\varphi_\delta^2(\xi_\delta(\vartheta))+\delta^{1/N}\tilde 
R_\delta\Big((r_2+\varphi_\delta(\xi_\delta(\vartheta)))\xi_\delta(\vartheta)\Big).
\]
Then 
\[
\rho_\delta(\vartheta) = \varphi_\delta(\xi_\delta(\vartheta))+\delta^{1/N}Z_\delta(\vartheta),
\]
where $Z_\delta$ is uniformly bounded in $C^{1,1}$. Recalling \eqref{eq:quantphi} and Proposition 
\ref{pro:nearlyspherical} we obtain the desired estimates.
\end{proof}

\begin{remark}\label{rmk:GagliardoNirenberg}
From Theorem~\ref{thm:nearlyspher1}, thanks to the Gagliardo Nirenberg interpolation inequalities, one can deduce as in~\cite[Corollary~1.5]{ferreri_verzini2} estimates on different norms of $\rho_\delta$, the most interesting one being
\[
\|\rho_\delta\|_{C^{1,\alpha}}=o\left(\delta^{\frac{(1-\alpha)}{N(4+N)}}\right) \quad \forall\, \alpha\in (0,1).
\]
%and for all $r>N$ we have \[
%\|\varphi_\delta\|_{L^\infty}\leq C_N \|\varphi_\delta\|^{1-a}_{L^2}\|\varphi_\delta\|_{W^{1,r}}^a,
%\]
%with $a=\tfrac{Nr}{Nr+2r-2N}$. 
\end{remark}

\bigskip

\textbf{Acknowledgments.} We warmly thank Manuel del Pino for pointing our attention on 
the techniques introduced in \cite{MR1736974,delpino_flores}.

Work partially supported by: the Italian MUR and the European Union through the 
PRIN projects  
P20227HX33Z ``Pattern formation in nonlinear phenomena'' (BP and GV), 
``$NO^3$'' P2022R537CS, CUP\_F53D23002810006 (DM) and 
``Mathematics for Industry 4.0'' P2020F3NCPX, CUP\_F19J21017440001 (DM); 
the European Research Council, under the EU's Horizon 2020 research and innovation program, 
through the project ERC VAREG - ``Variational approach to the regularity of the free boundaries'' (No. 
853404) (LF); the Portuguese government through FCT/Portugal
under the project PTDC/MAT-PUR/1788/2020 (GV);  the project "Start" within the program of the University "Luigi Vanvitelli" reserved to young researchers (BP).

LF acknowledges the MIUR Excellence Department Project awarded to the Department of Mathematics, University of Pisa, CUP I57G22000700001.

All authors have been partially supported and/or are members of the INdAM-GNAMPA group.

\bigskip

\textbf{Disclosure statement.} The authors report there are no competing interests to declare.

%%%%%BIBLIO

\bibliography{frac_eig}{}
\bibliographystyle{abbrv}
\medskip
\small
\begin{flushright}
\noindent \verb"lorenzo.ferreri@sns.it"\\
Classe di Scienze, Scuola Normale Superiore, \\
Piazza dei Cavalieri 7, 56126 Pisa, Italy\\
\medskip
\noindent \verb"dario.mazzoleni@unipv.it"\\
Dipartimento di Matematica  ``F. Casorati'', \\
Universit\`a di Pavia\\
via Ferrata 5, 27100 Pavia, Italy\\
\medskip
\noindent \verb"benedetta.pellacci@unicampania.it"\\
Dipartimento di Matematica e Fisica,\\
Universit\`a della Campania  ``Luigi Vanvitelli''\\
viale A. Lincoln 5, 81100 Caserta, Italy\\
\medskip
\noindent \verb"gianmaria.verzini@polimi.it"\\
Dipartimento di Matematica, Politecnico di Milano\\
piazza Leonardo da Vinci 32, 20133 Milano, Italy\\
\end{flushright}

\end{document}